\newtheorem{theorem}{Theorem}[section]
\newtheorem{proposition}[theorem]{Proposition}
\newtheorem{corollary}[theorem]{Corollary}
\newtheorem{lemma}[theorem]{Lemma}
\theoremstyle{definition}
\newtheorem{definition}[theorem]{Definition}
\newtheorem{remark}[theorem]{Remark}
\newtheorem{problem}[theorem]{Problem}
\newcommand{\C}{\mathbb{C}}
\newcommand{\D}{\mathbb{D}}
\newcommand{\N}{\mathbb{N}}
\newcommand{\Z}{\mathbb{Z}}
\newcommand{\abs}[1]{\left| #1 \right|}
\DeclareMathOperator{\h}{h}
\let\b\relax
\DeclareMathOperator{\b}{b}
\title[Hardy number and Bergman number]{On the Hardy number and the Bergman number of a planar domain}
\author[D. Betsakos]{Dimitrios Betsakos}
\address{Department of Mathematics, Aristotle University of Thessaloniki, 54124, Thessaloniki, Greece}
\email{betsakos@math.auth.gr}
\author[F. J. Cruz-Zamorano]{Francisco J. Cruz-Zamorano}
\address{Departamento de Matem\'{a}tica Aplicada II and IMUS, Escuela T\'{e}cnica Superior de Ingenier\'{i}a, Universidad de Sevilla, Camino de los Descubrimientos, S/N 41092, Sevilla, Spain}
\email{fcruz4@us.es}
\date{\today}
\thanks{Cruz-Zamorano was supported by Ministerio de Innovaci\'on y Ciencia, Spain, project PID2022-136320NB-I00 and by Ministerio de Universidades, Spain, through the action Ayuda del Programa de Formaci\'on de Profesorado Universitario, reference FPU21/00258. This work was initiated when Cruz-Zamorano visited the Aristotle University of Thessaloniki, funded by Ministerio de Ciencia, Innovaci\'on y Universidades, reference EST24/00450.}
\keywords{Hardy number, Bergman number, Greenian domain, Green function, harmonic measure, hyperbolic metric.}
\subjclass[2020]{Primary: 30H10, 30H20; Secondary: 31A15.}
\numberwithin{equation}{section}
\begin{document}

\begin{abstract}
This article deals with functions with a prefixed range and their inclusions in Hardy and weighted Bergman spaces. This idea was originally introduced by Hansen for Hardy spaces, and it was recently taken into weighted Bergman spaces by Karafyllia and Karamanlis. We provide a new characterization for the Hardy number of a domain in terms of its Green function. Based on this, we present a class of domains for which the Hardy number and the Bergman number coincide. However, in general, we show that the Hardy number and the Bergman number of a domain are not equal; even for domains which are regular for the Dirichlet problem.
\end{abstract}

\maketitle

\section{Introduction}
This article is devoted to analyze the Hardy and weighted Bergman spaces to which holomorphic functions with a prefixed range belong. To introduce these ideas, let us recall that the Hardy space $H^p$ for $0 < p < +\infty$ is defined as the space of holomorphic functions $f$ defined on the unit disk $\D$ for which
$$\sup_{0 < r < 1}\int_0^{2\pi}|f(re^{i\theta})|^pd\theta < +\infty.$$
In the case $p = +\infty$, $H^{\infty}$ stands for the space of bounded holomorphic functions on $\D$. An introduction to these spaces can be found in \cite{Duren-Hp}. 

It is known \cite[p. 2]{Duren-Hp} that $H^q \subset H^p$ whenever $0 < p \leq q \leq +\infty$. The Hardy number of a function $f$ that is holomorphic on $\D$ is defined as
$$\h(f) = \sup(\{0\} \cup \{p > 0 : f \in H^p\}) \in [0,+\infty].$$
In \cite{Hansen1}, Hansen introduced the Hardy number of a domain $D \subset \C$, which is defined as
\begin{align*}
\h(D) & = \sup(\{0\} \cup \{p > 0 : \mathrm{Hol}(\D,D) \subset H^p\}) \\
& = \inf\{\h(f) : f \in \mathrm{Hol}(\D,D)\} \in [0,+\infty].   
\end{align*}
Here, $\mathrm{Hol}(\D,D)$ denotes the set of all holomorphic functions $f \colon \D \to D$.
There are some recent works in which this topic is studied. Some of them (see \cite{Essen,K-HypDist,KimSugawa}) estimate the Hardy number of general domains using the harmonic measure or the hyperbolic distance. Some others (see \cite{CCZKRP,K-Comb2}) analyze the Hardy number of domains with specific geometric properties.

In \cite{Essen}, Ess\'en related the Hardy number of a domain with certain harmonic measures. Later, Kim and Sugawa \cite{KimSugawa} provided a formula for the Hardy number of a domain using Ess\'en's theorem; see Subsection \ref{subsec:EKS}. The first result of this article deals with the Hardy number of a domain and its Green function. To introduce it, we recall some tools: If the complement (in $\C$) of a domain $D\subset \C$ contains at least two points, then $D$ is a {\it hyperbolic domain} and there exists a universal covering map $f_D \colon \D \to D$; see \cite[Chapter 15]{Marshall}. If $D$ is a domain and its complement is non-polar, then $D$ possesses a  Green function; see Subsection \ref{subsec:green}. Such domains are called {\it Greenian}. In particular, Greenian domains are also hyperbolic. Without loss of generality, we will sometimes assume that $0 \in D$ and that the universal covering map $f_D$ is such that $f_D(0) = 0$. In such a case, if $g_D$ is the Green function of $D$, we extend it so that $\C \setminus \{0\} \ni z \mapsto g_D(z,0)$ is defined to be $0$ whenever $z \not\in D$. Under the latter setting, we define the function
\begin{equation}
\label{eq:integralmean}
\psi_D(r) = \int_0^{2\pi}g_D(re^{i\theta},0)d\theta, \quad 0 < r < +\infty.
\end{equation}

\begin{theorem}
\label{thm:main-formula-Hardy}
Let $D \subset \C$ be a Greenian domain. Assume that $0 \in D$, and let $f_D \colon \D \to D$ be a universal covering map with $f_D(0) = 0$. Let $\psi_D$ be as in \eqref{eq:integralmean}. Then,
\begin{equation*}
\h(D) = \h(f_D) = \liminf_{r \to +\infty}\left(-\dfrac{\log \psi_D(r)}{\log r}\right).
\end{equation*}
\end{theorem}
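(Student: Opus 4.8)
The plan is to prove the two equalities separately, treating $\h(D)=\h(f_D)$ as essentially classical and devoting the real work to the Green-function formula.

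First I would dispose of the equality $\h(D)=\h(f_D)$. Since $f_D\in\mathrm{Hol}(\D,D)$, the definition of $\h(D)$ as an infimum gives $\h(D)\le\h(f_D)$ at once. For the reverse inequality, take any $f\in\mathrm{Hol}(\D,D)$; as $\D$ is simply connected and $f_D$ is a covering map, $f$ lifts to a holomorphic self-map $F\colon\D\to\D$ with $f=f_D\circ F$. Littlewood's subordination principle then yields that $f_D\in H^p$ forces $f=f_D\circ F\in H^p$, whence $\h(f)\ge\h(f_D)$; taking the infimum over $f$ gives $\h(D)\ge\h(f_D)$. This is due to Hansen, and I would simply cite it.

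The heart of the matter is the identity $\h(f_D)=\liminf_{r\to+\infty}\bigl(-\log\psi_D(r)/\log r\bigr)$. The key observation is that the zero-extension of $z\mapsto g_D(z,0)$ is subharmonic on $\C\setminus\{0\}$, and its Riesz measure is $\nu=\omega(0,\cdot,D)-\delta_0$, where $\omega(0,\cdot,D)$ is the harmonic measure from $0$ (the normal derivative of the Green function on $\partial D$) and $\delta_0$ accounts for the logarithmic pole. Writing $U(r)=\tfrac1{2\pi}\psi_D(r)$ for the circular mean and using the standard Jensen relation $\frac{d}{d\log r}U(r)=\nu(\{|\zeta|\le r\})$ for circular means of subharmonic functions,
\[
\frac{d}{d\log r}\,U(r)=\omega\bigl(0,\{|\zeta|\le r\}\cap\partial D,D\bigr)-1=-\beta(r),
\]
where $\beta(r):=\omega\bigl(0,\{\zeta\in\partial D:|\zeta|\ge r\},D\bigr)$. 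Integrating from $r$ to $+\infty$ and using $U(r)\to0$ gives the clean representation
\[
\psi_D(r)=2\pi\int_r^{+\infty}\frac{\beta(s)}{s}\,ds.
\]
From here the proof reduces to two elementary estimates showing that $\psi_D$ and $\beta$ have the same logarithmic decay rate. Since $\beta$ is nonincreasing, $\psi_D(r)\ge2\pi\log 2\,\beta(2r)$, which yields $\liminf_{r\to+\infty}(-\log\psi_D(r)/\log r)\le\liminf_{r\to+\infty}(-\log\beta(r)/\log r)$. For the reverse, writing $\alpha$ for the latter liminf, one has $\beta(s)\le s^{-(\alpha-\varepsilon)}$ for all large $s$, so $\psi_D(r)\le C\,r^{-(\alpha-\varepsilon)}$ and hence $\liminf_{r\to+\infty}(-\log\psi_D(r)/\log r)\ge\alpha-\varepsilon$ for every $\varepsilon>0$. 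Therefore the two liminfs agree, and I would invoke the Essén--Kim--Sugawa formula from Subsection \ref{subsec:EKS} to identify $\liminf_{r\to+\infty}(-\log\beta(r)/\log r)$ with $\h(D)=\h(f_D)$, after a routine two-sided comparison (via the maximum principle) between the boundary harmonic measure $\beta(r)$ and the truncated-domain harmonic measure appearing there.

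The hard part will be the technical bookkeeping rather than the main idea: justifying that the zero-extension is subharmonic with Riesz measure exactly the harmonic measure (clean for regular boundaries, needing the usual potential-theoretic care at irregular points), and---most delicately---handling unbounded domains, where harmonic measure may carry mass at infinity. In that degenerate case $\beta(s)\not\to0$, so both sides of the identity vanish, and I would treat this regime separately to confirm $\h(D)=0$. In the non-degenerate case I would also need to verify $U(r)\to0$ and the convergence of the integral, i.e. that $\beta(s)\log s\to0$.
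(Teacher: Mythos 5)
Your first step ($\h(D)=\h(f_D)$ via lifting and Littlewood subordination) is fine, and your Riesz-measure identity $\frac{d}{d\log r}U(r)=-\beta(r)$ is correct where it applies. The genuine gap is in the last step: the identification of $\liminf_{r\to+\infty}\bigl(-\log\beta(r)/\log r\bigr)$ with $\h(D)$ is not a ``routine two-sided comparison via the maximum principle.'' The maximum principle gives only the one-sided bound $\beta(r)\le\omega(r)$ (to exit $D$ on the far boundary you must first cross the circle), and the reverse inequality fails badly: for $D=\C\setminus[0,1]$, or $\C\setminus([0,1]\cup E)$ with $E$ a discrete (hence polar) set escaping to infinity, one has $\beta(r)=0$ for all $r>1$ while $\omega(R)\asymp 1/\log R$ and $\h(D)=0$, so your intermediate quantity $\liminf(-\log\beta/\log r)=+\infty$ is wildly different from $\h(D)$. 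You relegate this to a ``degenerate case where $\beta(s)\not\to 0$,'' but that mischaracterizes the obstruction: for any planar Greenian domain, harmonic measure is a probability measure on the finite boundary and $\beta(s)\to 0$ always; what actually breaks is $U(\infty)=0$, i.e.\ your representation acquires an additive constant $U(\infty)=\lim_{z\to\infty}\frac{1}{2\pi}\! \int g_D$, which is positive exactly when $\infty$ is an irregular boundary point. So you would need (i) to prove that $U(\infty)>0$ forces $\h(D)=0$, and (ii) in the regular case $U(\infty)=0$, to supply a real argument for $\liminf(-\log\beta/\log r)\le\h(D)$, which amounts to bounding the truncated-domain measure $\omega(R)$ \emph{above} by $\beta$ up to logarithmic order --- precisely the direction the maximum principle does not give and which is false without the regularity hypothesis. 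Neither step is supplied, and I do not see that either is routine.

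The paper sidesteps harmonic measure entirely here. It first proves (Lemma \ref{lemma:Hp-integral}) the integral criterion $f_D\in H^p\iff\int_0^{+\infty}r^{p-1}\psi_D(r)\,dr<+\infty$, obtained from Yamashita's Littlewood--Paley characterization of $H^p$ together with the covering-map formula $g_D(f(a),w)=\sum_j g_\D(a,z_j(w))$ and a non-univalent change of variables; the liminf formula then follows from this criterion using only the monotonicity of $\psi_D$ (Lemma \ref{lemma:decreasing}, which is where the subharmonicity of the zero-extension and the exhaustion by regular domains actually enter). If you want to keep your harmonic-measure route, you would essentially have to re-prove Ess\'en's theorem in the form you need; the shorter path is to replace your final step by the Littlewood--Paley argument.
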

We recall that if a domain $D \subset \C$ is not Greenian, then $\h(D) = 0$; see Subsection \ref{subsec:EKS}.

The function $\psi_D$ in \eqref{eq:integralmean} can also be used to connect the Hardy number and the Bergman number of a domain. To present this, we recall that the weighted Bergman space $A^p_{\alpha}$ for $0 < p < +\infty$ and $\alpha > -1$ is the space of holomorphic functions $f$  on $\D$ for which
$$\int_{\D}\abs{f(z)}^p(1-\abs{z})^{\alpha}dA(z) < +\infty,$$
where $A$ denotes the (Lebesgue) area measure. An exposition regarding these spaces can be found in \cite{Duren-Apa}.

In \cite{BKK}, Betsakos, Karafyllia, and Karamanlis analyzed the weighted Bergman spaces to which a given conformal map belongs. Later, Karafyllia and Karamanlis \cite{KK} introduced the Bergman number of a holomorphic map $f$ on $\D$, that is,
$$\b(f) = \sup\left(\{0\} \cup \left\lbrace \dfrac{p}{\alpha+2} : p > 0, \, \alpha > -1, \, f \in A^p_{\alpha}\right\rbrace\right) \in [0,+\infty].$$
They showed that $\h(f) = \b(f)$ for all conformal maps on $\D$ in contrast to the general case; see Remark \ref{remark:hfzero}.

In \cite{K-Domain}, Karafyllia also introduced the Bergman number of a domain $D \subset \C$, that is,
\begin{equation}\label{b-def}
\b(D) = \inf\{\b(f) : f \in \mathrm{Hol}(\D,D)\} \in [0,+\infty].
\end{equation}
To continue with these ideas, we introduce Bergman numbers depending on the weight: For $\alpha > -1$ and a holomorphic map $f$ on $\D$, set
$$\b_{\alpha}(f) = \sup(\{0\} \cup \{p > 0: f \in A^p_{\alpha}\}) \in [0,+\infty].$$
Similarly, for a domain $D \subset \C$, we also define
\begin{align}
\label{eq:def-balpha}
\b_{\alpha}(D) & = \sup(\{0\} \cup \{p > 0 : \mathrm{Hol}(\D,D) \subset A^p_{\alpha}\}) \\
& = \inf\{\b_{\alpha}(f) : f \in \mathrm{Hol}(\D,D)\} \in [0,+\infty]. \nonumber
\end{align}
It follows from the work of Karafyllia and Karamanlis \cite{KK} that $\b(D) = \h(D)$ for every simply connected domain $D\subset \C$. In the present article, we further investigate about this equality for multiply connected domains. For instance, we construct some domains for which the Hardy number and the Bergman number differ. In particular, one of these examples disproves a previous claim in \cite[Theorem 1.1]{K-Domain}.
\begin{theorem}
\label{thm:Hardy-neq-Bergman}
{\rm (a)} For every $p\in [0,+\infty)$ there exists a domain $D \subset \C$ such that $\h(D) = p$ and $\b(D) = \b_{\alpha}(D) = +\infty$, $\alpha > -1$. \\
{\rm (b)} There exists a domain $D \subset \C$ which is regular for the Dirichlet problem but $\h(D)=1/2$ and $\b(D)=\b_{\alpha}(D) = +\infty$, $\alpha > -1$.
\end{theorem}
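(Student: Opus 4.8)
The plan is to reduce both invariants to the universal covering map $f_D$ and then to design a single \emph{infinitely connected} domain whose covering map grows extremely slowly in modulus while still reaching large values often enough to fix the Hardy number. By Theorem~\ref{thm:main-formula-Hardy} we already have $\h(D)=\h(f_D)$, so it suffices to control $\h(f_D)$ and to prove that $\b_\alpha(f)=+\infty$ for \emph{every} $f\in\mathrm{Hol}(\D,D)$ and every $\alpha>-1$. I would first establish the Bergman analogue $\b_\alpha(D)=\b_\alpha(f_D)$. Given $f\in\mathrm{Hol}(\D,D)$, lift it through the cover as $f=g\circ\omega$, where $g$ is a universal covering map with $g(0)=f(0)$ and $\omega\colon\D\to\D$ is holomorphic with $\omega(0)=0$; Littlewood's subordination theorem then gives $M_q(r,f)\le M_q(r,g)$ for the $L^q$ integral means, for all $q>0$ and $0<r<1$, so $f\in A^q_\alpha$ whenever $g\in A^q_\alpha$. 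Since any two universal covering maps differ by a disk automorphism and the spaces $A^q_\alpha$ are invariant (up to equivalent norms) under composition with automorphisms of $\D$, the membership $g\in A^q_\alpha$ is equivalent to $f_D\in A^q_\alpha$. Hence $\b_\alpha(f)\ge\b_\alpha(f_D)$ for all admissible $f$, and as $f_D$ is itself admissible we get $\b_\alpha(D)=\b_\alpha(f_D)$. The whole theorem thus becomes a statement about the single function $f_D$.

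The Bergman conclusion is then cheap, \emph{provided} the covering map grows slowly: writing $s=1-r$ and $M_\infty(r,f_D)=\max_{|z|=r}\abs{f_D(z)}$, if
$$M_\infty(r,f_D)\le C\Big(\log\tfrac{1}{1-r}\Big)^{c}$$
for some constants $C,c>0$, then for every $q>0$ and $\alpha>-1$
$$\int_\D \abs{f_D(z)}^q (1-\abs{z})^{\alpha}\,dA(z)\le C^q\,2\pi\int_0^1\Big(\log\tfrac1s\Big)^{cq}s^{\alpha}\,ds<+\infty,$$
because a power of $\log\frac1s$ never destroys the convergence of $\int_0^1 s^{\alpha}\,ds$ for $\alpha>-1$. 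So poly-logarithmic growth of $f_D$ alone forces $\b_\alpha(f_D)=+\infty$ for all $\alpha>-1$, and therefore $\b(D)=\b_\alpha(D)=+\infty$. The Hardy side is governed instead by Theorem~\ref{thm:main-formula-Hardy}: to obtain $\h(D)=p$ I must arrange $\psi_D(R)=R^{-p+o(1)}$ as $R\to+\infty$, equivalently that $\abs{f_D}$ exceeds the level $R$ on circles $\{|z|=1-s\}$ whose angular measure transitions, exactly at the exponent $q=p$, from keeping $M_q$ bounded to letting it grow sub-polynomially. These two demands are compatible \emph{only} because $D$ is infinitely connected: a slowly growing modulus together with a merely polynomially small $\psi_D$ is possible since the circle $\{|w|=R\}$ is covered with enormous multiplicity, so $\psi_D(R)=\int_0^{2\pi}g_D(Re^{i\theta},0)\,d\theta$ accumulates contributions from infinitely many sheets even though each individual passage out to modulus $R$ is hyperbolically long.

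For the construction I would take $D$ to reach infinity through many thin, long channels: concretely, a Denjoy-type domain $\C\setminus E$ with $E\subset\R$ a carefully spaced union of intervals, or a rotationally organized ``grating'' of arcs placed at radii $R_n\to+\infty$. The parameters (number, length and width of the channels) are tuned so that, on one hand, the hyperbolic distance in $D$ from $0$ to $\{|w|=R\}$ grows like a power of $R$; since $f_D$ is a local isometry, a point of modulus $R$ is first attained at $\abs{z}=r$ with $\tfrac12\log\tfrac{2}{1-r}\approx d_D(0,\{|w|=R\})$, which converts the power growth of the distance into the poly-logarithmic modulus bound of the previous paragraph. On the other hand, the harmonic measure of $\{|w|>R\}$, and hence $\psi_D(R)$, is made to decay like $R^{-p}$, which by Theorem~\ref{thm:main-formula-Hardy} yields $\h(D)=p$ (for $p=0$ one instead tunes $\psi_D$ to decay sub-polynomially). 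For part~(b) I set $p=\tfrac12$ and choose the removed set to be a union of \emph{non-degenerate} continua, so that every boundary point satisfies the exterior-continuum criterion and $D$ is regular for the Dirichlet problem, while the asymptotics of $\psi_D$ and the growth of $f_D$ are left unchanged.

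The crux is this last step: producing a single domain whose far field has harmonic measure decaying at a \emph{prescribed} polynomial rate (controlling $\h$ through $\psi_D$) while the covering map grows only poly-logarithmically (controlling $\b$), and, in part~(b), simultaneously keeping the boundary thick enough to be Dirichlet-regular. The two features pull the geometry in opposite directions — slow growth wants long, thin channels, whereas a polynomially small harmonic measure wants abundant access to infinity — so the estimates must balance the number, length and width of the channels very precisely. The sharp harmonic-measure and Green-function bounds for infinitely connected domains needed to carry out this balance are the technical heart of the argument.
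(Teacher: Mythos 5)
Your reduction of the Bergman half to the covering map, and the observation that poly-logarithmic growth of $f_D$ places it in every $A^q_\alpha$, are both correct, but they are a detour: once the domain contains no disk of radius larger than some fixed $M$, Theorem \ref{thm:Bloch-domain} puts every $f\in\mathrm{Hol}(\D,D)$ in the Bloch space, and $\mathcal{B}\subset A^q_\alpha$ for all $q>0$, $\alpha>-1$ yields $\b(D)=\b_\alpha(D)=+\infty$ at once (Corollary \ref{cor:Bloch-imples-binfinity}); this is the route the paper takes, and it spares you any quantitative lower bound on $\rho_D(0,\{|w|=R\})$. The serious gap is in part (a). You propose to build, for each $p$, a single Denjoy-type or grating domain whose far-field harmonic measure decays like $R^{-p}$ while the domain stays Bloch. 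For $p<1/2$, and above all for $p=0$, these two demands genuinely fight each other: a Bloch domain must have complement spread densely at every scale, while $\psi_D(R)$ (equivalently the Ess\'en--Kim--Sugawa harmonic measure) decaying sub-polynomially requires that complement to be nearly invisible to Brownian motion, and your proposal offers no mechanism to reconcile the two. The paper's resolution is a trick you do not mention: start from any domain $\Omega$ with $\h(\Omega)=p$ (which exists by \cite[Theorem 5.2]{CCZKRP}, or $\Omega=\C$ for $p=0$) and remove the polar lattice $\{m+in\}$. Polar sets are invisible to harmonic measure, so $\h$ is unchanged, while the punctures destroy all large disks and make the domain Bloch. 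Without this, or an equally concrete substitute, part (a) is not established.

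For part (b) your outline coincides with the paper's construction --- the slit plane $\C\setminus(-\infty,-1]$ decorated with a grating of small circular arcs at the radii $n$, non-degenerate so that $D$ is regular, dense enough to be Bloch, and short enough that $\h$ stays at $1/2$ --- but the entire proof consists precisely of the step you defer. Showing that the arc-widths $\alpha_n$ can be chosen so that $\omega(0,C_R,D_R)\gtrsim R^{-1/2}/\log R$ along a sequence $R\to\infty$ requires: a continuity statement saying that shrinking finitely many arcs recovers the harmonic measure of the undecorated domain (Lemma \ref{lemma:continuity}); a uniform lower bound of order $C/\sqrt{R_2}$ for passage through an annulus with a radial slit (Lemma \ref{lemma:rate}); a polarization argument to restrict attention to the right half-circle (Lemma \ref{lemma:symmetry}); and an induction assembling these via the strong Markov property (Lemma \ref{lemma:induction} together with the inductive choice of the $\alpha_n$). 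Saying that ``the estimates must balance the number, length and width of the channels very precisely'' identifies the difficulty but does not resolve it; as written, the proposal establishes neither the upper bound $\h(D)\le 1/2$ in part (b) nor the existence of the domains required in part (a).
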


In contrast, we also show that the equality between the Hardy number and the Bergman number holds for a class of multiply connected domains. This class is given as follows:
\begin{definition}
\label{def:classD}
A domain $D\subset \C$ belongs to the class $\mathcal{D}$, if it has the following properties:
\\
(a) $D$ is unbounded.\\
(b) Let $F$ be the union of all bounded components of $\C_\infty\setminus D$. The set $F$ is bounded.\\
(c) Consider the simply connected domain $\Omega=D\cup F$. For all sufficiently large $r>0$, the set $\Omega\cap \{z\in \C:|z|=r\}$ has exactly one component. 
\end{definition}
A half-strip or a sector having a finite number of holes are typical domains in the class $\mathcal{D}$. Also, $\C\setminus \{z_1,z_2,\dots,z_n\}$ is a domain in $\mathcal{D}$. 
\begin{theorem}
\label{thm:class-D}
Let $\alpha > -1$, $D\in\mathcal{D}$ and let $\Omega$ be the simply connected domain associated with $D$ in Definition \ref{def:classD}. Then
$$\h(D) =\b(D)=\h(\Omega)=\b(\Omega)=\dfrac{\b_{\alpha}(D)}{\alpha+2}=\dfrac{\b_{\alpha}(\Omega)}{\alpha+2}.$$
If, in addition, $D$ is a hyperbolic domain with $0 \in D$, we have
$$\h(f_D) = \b(f_D) = \dfrac{\b_{\alpha}(f)}{\alpha+2},$$
where $f_D \colon \D \to D$ is a universal covering map with $f_D(0) = 0$.
\end{theorem}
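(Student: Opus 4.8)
The plan is to reduce every one of the six quantities to the simply connected domain $\Omega$, where the required equalities are already known, and to carry out this reduction separately for the Hardy number (via the Green function) and for the weighted Bergman numbers (via super-level sets of the covering map). First note that $\Omega$ is Greenian: being simply connected, its complement $\C_\infty\setminus\Omega$ is connected, and it contains the unbounded complementary component of $D$ together with $\infty$, so it is a non-degenerate continuum and hence non-polar. Since $\Omega$ is simply connected, \cite{KK} gives $\h(\Omega)=\b(\Omega)$ and the weighted computation of \cite{BKK} gives $\b_{\alpha}(\Omega)=(\alpha+2)\h(\Omega)$ for every $\alpha>-1$. Thus the whole statement follows once I establish the two reductions $\h(D)=\h(\Omega)$ and $\b_{\alpha}(D)=\b_{\alpha}(\Omega)$ (the latter for every $\alpha>-1$, which also yields $\b(D)=\b(\Omega)$ after taking the supremum over $\alpha$). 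The inclusion $D\subset\Omega$ gives $\mathrm{Hol}(\D,D)\subset\mathrm{Hol}(\D,\Omega)$, hence $\h(D)\ge\h(\Omega)$ and $\b_{\alpha}(D)\ge\b_{\alpha}(\Omega)$ for free; the content is in the reverse inequalities, and this is exactly where hypotheses (b) and (c) of Definition \ref{def:classD} must be used.

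For the Hardy number I would use the formula of Theorem \ref{thm:main-formula-Hardy}, which applies to both $D$ and $\Omega$. Writing $F\subset\{|z|\le R_0\}$ for the bounded union of holes, the function $h=g_{\Omega}(\cdot,0)-g_{D}(\cdot,0)$ is harmonic in $D$ (the logarithmic poles at $0$ cancel), vanishes on $\partial\Omega$, and equals $g_{\Omega}(\cdot,0)\le M$ on $\partial F$; by the maximum principle $0\le h\le M\,\omega_{D}(\cdot,\partial F)$, and in particular $\psi_{D}\le\psi_{\Omega}$. The crux is a matching lower bound, for which I would prove the comparison $\omega_{D}(z,\partial F)\le C\,g_{D}(z,0)$ for all large $|z|$: both functions are positive and harmonic in $D\cap\{|z|>R_1\}$ for $R_1>R_0$ and both vanish on the common outer boundary $\partial D\cap\{|z|>R_1\}$, so a boundary Harnack argument applies, and condition (c)—which forces $\Omega$ to be a single unbounded channel at each large radius—is what makes the boundary geometry regular enough to run it. Integrating over $|z|=r$ gives $\int_0^{2\pi}\omega_{D}(re^{i\theta},\partial F)\,d\theta\le C\,\psi_{D}(r)$, whence $\psi_{\Omega}(r)\le\psi_{D}(r)+M\int_0^{2\pi}\omega_{D}(re^{i\theta},\partial F)\,d\theta\le(1+CM)\psi_{D}(r)$. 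Together with $\psi_{D}\le\psi_{\Omega}$ this yields $\psi_{D}(r)\asymp\psi_{\Omega}(r)$ as $r\to+\infty$, so the two liminf expressions in Theorem \ref{thm:main-formula-Hardy} coincide and $\h(D)=\h(\Omega)=\h(f_D)$.

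For the weighted Bergman numbers I would pass to a distribution-function description. By the layer-cake formula, for $f\in\mathrm{Hol}(\D,D)$ one has $\int_{\D}|f|^p(1-|w|)^{\alpha}dA = p\int_0^{+\infty}t^{p-1}m_{\alpha}(\{|f|>t\})\,dt$, with $m_{\alpha}(E)=\int_E(1-|w|)^{\alpha}dA$, so that $\b_{\alpha}(f)=\liminf_{t\to+\infty}(-\log m_{\alpha}(\{w:|f(w)|>t\})/\log t)$, the decay exponent of the weighted super-level areas. Applying this to the covering map $f_D$, the set $\{|f_D|>t\}$ is the full $f_D$-preimage of $D\cap\{|\zeta|>t\}$. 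Because $F\subset\{|z|\le R_0\}$, for $t>R_0$ this region of $D$ coincides with the corresponding region of $\Omega$, and condition (c) guarantees it is a single unbounded channel carrying no holes; consequently $f_D$ restricted there has bounded valence, and $m_{\alpha}(\{|f_D|>t\})$ decays at the same rate in $t$ as the analogous super-level area of a Riemann map $\phi_{\Omega}\colon\D\to\Omega$. This forces the exponents to agree, giving $\b_{\alpha}(f_D)=\b_{\alpha}(\Omega)=(\alpha+2)\h(\Omega)=(\alpha+2)\h(D)$; hence $\b_{\alpha}(D)\le\b_{\alpha}(f_D)=(\alpha+2)\h(D)$, which together with the free inequality $\b_{\alpha}(D)\ge\b_{\alpha}(\Omega)=(\alpha+2)\h(D)$ closes the chain, yields $\b(D)=\h(D)$ after dividing by $\alpha+2$ and taking the supremum, and proves the final identities $\h(f_D)=\b(f_D)=\b_{\alpha}(f_D)/(\alpha+2)$.

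The main obstacle is precisely this valence control in the Bergman step: the failure of condition (c) is what produces the examples of Theorem \ref{thm:Hardy-neq-Bergman} with $\b(D)=+\infty>\h(D)$, so the single-channel hypothesis cannot be avoided and must be used quantitatively to bound how many sheets of $f_D$ accumulate over $\{|\zeta|>t\}$. I expect the boundary Harnack comparison in the Hardy step and the comparability of decay exponents of the weighted super-level areas in the Bergman step to require the most care, since both hinge on converting the purely topological condition (c) into effective estimates on harmonic measure and on weighted area near infinity.
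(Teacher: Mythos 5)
Your overall architecture --- reduce everything to $\Omega$, invoke the simply connected case from \cite{KK}, and prove the two reverse comparisons --- matches the paper's, and the two reductions you isolate are indeed where the content lies. But both load-bearing steps are asserted rather than proved, and one sub-claim is false. First, the comparison $\omega(z,\partial F,D)\le C\,g_D(z,0)$ for large $|z|$ (equivalently $g_\Omega(0,w)\le C\,g_D(0,w)$ on $D_\sigma$, which is what the paper actually establishes) is not an off-the-shelf boundary Harnack principle: condition (c) of Definition \ref{def:classD} is purely topological and imposes no regularity whatsoever on $\partial\Omega$ near infinity, so the standard BHP for Lipschitz, NTA, or uniform domains does not apply; and the naive minorization $g_D(z,0)\ge \bigl(\inf_{A_{R_1}}g_D(\cdot,0)\bigr)\,\omega(z,A_{R_1},D_{R_1})$ fails because the infimum over the open arc $A_{R_1}$ is zero at its endpoints. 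The paper builds the estimate from scratch in Section \ref{sec:class-D-lemmas}: Baernstein and Weitsman symmetrization produce a cross-cut $A_\rho$ with small return harmonic measure and large hyperbolic separation (Lemma \ref{PL4}), a compactness argument shows that all relevant hyperbolic geodesics of $\Omega$ cross $A_\rho$ inside a fixed compact subarc (Lemma \ref{PL5}), and the strong Markov property \eqref{smp} together with Lemma \ref{PL1} then closes a self-improving inequality (Lemma \ref{PL6}). This is the technical heart of the Hardy reduction and cannot be waved through.

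Second, the Bergman step breaks exactly at the phrase ``bounded valence'': whenever $F\neq\varnothing$ the covering $f_D$ is infinite-sheeted, so $f_D^{-1}(D\cap\{|\zeta|>t\})$ has infinitely many components, each mapped conformally onto the channel, and every point of the channel has infinitely many preimages there; the valence is infinite, not bounded. The correct substitute is Proposition \ref{prop:PL8}: $g_D(0,w)\le C\,g_\D(0,z_1(w))$ for $w\in D_\sigma$, i.e.\ the full preimage sum $g_D(0,w)=\sum_j\log(1/|z_j(w)|)$ is dominated by its largest term. Combined with Jensen's inequality for $t\mapsto t^{\alpha+2}$, this converts $f_D\in A^p_\alpha$ into $\int_0^{+\infty}r^{p-1}\psi_D(r)^{\alpha+2}\,dr<+\infty$ (Lemma \ref{lemma:Apa-integral}), and then the monotonicity of $\psi_D$ and Theorem \ref{thm:main-formula-Hardy} give $\h(D)\ge p/(\alpha+2)$, which is the inequality $\b(f_D)\le\h(D)$ you need to close the chain. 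That Green-function inequality is again a consequence of the Section \ref{sec:class-D-lemmas} machinery, not of condition (c) by inspection --- it is precisely the property that fails for the Bloch domains of Theorem \ref{thm:Hardy-neq-Bergman}, where the first preimage already sits too close to $\partial\D$. Your distribution-function reformulation of $\b_\alpha$ and the final bookkeeping over $\alpha$ are fine, but without these two estimates the proof has no engine.
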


\medskip

The paper is organized as follows: In Section \ref{sec:preliminaries} we provide some preliminaries needed throughout the paper. Next, in the subsequent sections, we prove Theorems \ref{thm:main-formula-Hardy} and \ref{thm:Hardy-neq-Bergman}. In Section \ref{sec:class-D-lemmas} we provide some properties of the class $\mathcal{D}$. Finally, in Section \ref{sec:proof-D}, we present the proof of Theorem \ref{thm:class-D}. We end with some open problems in Section \ref{sec:open-problems}.

\bigskip

\textbf{Acknowledgments.}  We thank Manuel D. Contreras, Christina Karafyllia, and Luis Rodr\'iguez-Piazza for their remarks, corrections, and comments. We also thank Adri\'an Llinares for suggesting reference \cite{Arevalo}.


\section{Preliminaries}
\label{sec:preliminaries}

\subsection{Harmonic measure}
 Let $\Omega$ be a domain in $\mathbb{C}$  and let $E$ be a Borel subset of $\partial\Omega$. The harmonic measure of $E$ with respect to $\Omega$ is the solution of the generalized Dirichlet problem for the Laplacian in $\Omega$ with boundary function $\chi_E$; see \cite[Theorem 4.2.6]{Ransford}. For $z\in\Omega$, we are going to use the standard notation $\omega(z,E,\Omega)$ for the harmonic measure of $E$ with respect to $\Omega$, evaluated at $z\in\Omega$.  We will use the same notation even if $\Omega$ is only an open set; in this case, we consider the component of $\Omega$ containing the point $z$. 
 
 Note that the harmonic measure, as a function of $E$, is a Borel probability measure on $\partial \Omega$.
 The basic properties of harmonic measure are the maximum principle and the conformal invariance. We briefly review now  some additional properties of harmonic measure that we will need. 

The first property we state is the \textit{Strong Markov Property}. Let $\Omega_1\subset\Omega_2$ be domains and $E$ be a Borel subset of $\partial \Omega_1\cap\partial \Omega_2$. Then, for all $z\in\Omega_1$, by \cite[p. 88]{PS}, 
\begin{equation}
\label{markov property}
\omega(z,E,\Omega_2)=\omega(z,E,\Omega_1)+\int\limits_{\partial\Omega_1\cap\Omega_2}\omega(\zeta,E,\Omega_2)\;\omega(z,d\zeta,\Omega_1).
\end{equation}

We will also need a polarization result of Solynin. We state here a simple version of \cite[Theorem 2]{Sol}. Let $\Omega$ be a planar domain and let $\ell$ be a straight line dividing the plane into two open half-planes $H^+,H^-$. Set $\Omega^+=\Omega\cap H^+$ and $\Omega^-=\Omega\cap H^-$. Let $*$ denote the reflection in $\ell$. 
\begin{theorem}
\cite[Theorem 2]{Sol}
\label{polarization}
Let $E$ be a Borel subset of $\partial \Omega$. Assume that $(\Omega^-)^*\subset \Omega^+$.\\
{\rm (a)} If $z\in \ell\cap\Omega$, $E\subset \partial \Omega\cap\partial \Omega^-$,
and $E^*\subset \partial \Omega$, then
$\omega(z,E,\Omega)\leq\omega(z,E^*,\Omega)$.\\
{\rm (b)} If $z\in \Omega^-$ and $E=E^*$, then $\omega(z,E,\Omega)\leq\omega(z^*,E,\Omega)$.
\end{theorem}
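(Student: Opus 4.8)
The plan is to use the probabilistic representation of harmonic measure together with a mirror coupling of Brownian motion, reducing the non-symmetric statement to a clean symmetric-domain comparison. Throughout, write $\omega(z,E,\Omega)=\mathbb{P}_z[B_{\tau}\in E]$, where $B$ is planar Brownian motion started at $z$ and $\tau$ is its exit time from $\Omega$; this is the standard probabilistic representation of harmonic measure for Borel $E$ in a Greenian domain. Denote by $\Omega^*$ the reflection of $\Omega$, and record the reflection-invariance identity: for $z\in\ell$ and Borel $A\subseteq\partial\Omega$ one has $\omega(z,A,\Omega)=\omega(z,A^*,\Omega^*)$, since $*$ fixes $z$ and maps $(\Omega,A)$ onto $(\Omega^*,A^*)$. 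I will also use the two symmetric ``hull'' domains $\Omega_0=\Omega\cup\Omega^*$ and $\Omega_1=\Omega\cap\Omega^*$, which satisfy $\Omega_0^*=\Omega_0$ and $\Omega_1^*=\Omega_1$; under the hypothesis $(\Omega^-)^*\subseteq\Omega^+$ one checks that $\Omega$ and $\Omega_0$ agree in $H^+$, while $\Omega$ and $\Omega_1$ agree in $H^-$ (and symmetrically for $\Omega^*$).

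For part (b), fix $z\in\Omega^-$ and run the mirror coupling: let $X$ be Brownian motion from $z$, let $T$ be its first hitting time of $\ell$, and set $Y_t=X_t^*$ for $t<T$ and $Y_t=X_t$ for $t\ge T$; then $Y$ is a Brownian motion from $z^*$ that coalesces with $X$ at time $T$. The decisive point is that the hypothesis $(\Omega^-)^*\subseteq\Omega^+$ yields a one-sided survival implication: as long as $X_t\in\Omega^-$ we have $Y_t=X_t^*\in(\Omega^-)^*\subseteq\Omega^+$, so $X$ alive forces $Y$ alive for $t<T$. I then argue the pathwise domination $\mathbf 1\{X\text{ exits }\Omega\text{ in }E\}\le\mathbf 1\{Y\text{ exits }\Omega\text{ in }E\}$: if $X$ survives to $T$ the two paths coalesce and exit at the same point; if $X$ exits first, at $p\in\partial\Omega\cap\overline{H^-}$, then $Y$ sits at $p^*$ and is still alive, and here the symmetry $E=E^*$ closes the argument, since $p\in E$ forces $p^*\in E^*=E\subseteq\partial\Omega$, so $Y$ exits simultaneously at $p^*\in E$. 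Taking expectations yields $\omega(z,E,\Omega)\le\omega(z^*,E,\Omega)$.

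For part (a), I first reduce to a domain-monotonicity statement. By the reflection identity, for $z\in\ell\cap\Omega$ we have $\omega(z,E,\Omega)=\omega(z,E^*,\Omega^*)$, so (a) is equivalent to $\omega(z,E^*,\Omega^*)\le\omega(z,E^*,\Omega)$ for the fixed upstairs target $E^*\subseteq\overline{H^+}\cap\partial\Omega\cap\partial\Omega^*$. Since neither $\Omega$ nor $\Omega^*$ contains the other, I pass through the symmetric hull $\Omega_0$ and apply the Strong Markov Property \eqref{markov property} to both inclusions $\Omega\subseteq\Omega_0$ and $\Omega^*\subseteq\Omega_0$ (one checks $E^*\subseteq\partial\Omega_0$). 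Subtracting the two identities, the common term $\omega(z,E^*,\Omega_0)$ cancels and there remain two integrals over the ``interior'' boundary pieces $\partial\Omega\cap\Omega_0\subseteq\overline{H^-}$ and $\partial\Omega^*\cap\Omega_0\subseteq\overline{H^+}$, which are reflections of one another. Using the reflection identity for the hitting measures $\omega(z,\cdot,\Omega)$ and $\omega(z,\cdot,\Omega^*)$ at the axis point $z$, I change variables $\zeta=\xi^*$ and collapse the difference to the single integral
$$\omega(z,E^*,\Omega)-\omega(z,E^*,\Omega^*)=\int_{\partial\Omega\cap\Omega_0}\bigl[\omega(\xi^*,E^*,\Omega_0)-\omega(\xi,E^*,\Omega_0)\bigr]\,\omega(z,d\xi,\Omega),$$
where $\xi$ ranges over $\overline{H^-}$ and $\xi^*$ over $\overline{H^+}$.

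It therefore suffices to prove the symmetric-domain lemma: if $G$ is symmetric in $\ell$, $\xi\in G\cap H^-$, and $E^*\subseteq\overline{H^+}\cap\partial G$, then $\omega(\xi,E^*,G)\le\omega(\xi^*,E^*,G)$. This I prove again by mirror coupling, now exploiting the exact symmetry of $G=\Omega_0$: coupling $X$ from $\xi$ with $Y=X^*$ from $\xi^*$, symmetry makes the two paths hit $\partial G$ simultaneously at mirror points before they coalesce, and an upstairs target $E^*$ can only be hit by the upstairs path, giving $\mathbf 1\{X\text{ exits in }E^*\}\le\mathbf 1\{Y\text{ exits in }E^*\}$ pathwise. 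Applying the lemma with $G=\Omega_0$ makes the bracket above nonnegative, which completes (a). I expect the main obstacle to be precisely this non-monotone comparison of $\Omega$ and $\Omega^*$ in part (a): the clean survival and exit domination available for (b) breaks down because the target now lies only on one side, and the fix is the three-step reduction just described (reflection identity, Strong Markov through the symmetric hull, symmetric-domain lemma). A secondary technical point, which I would treat by standard potential theory, is justifying the probabilistic representation and the pathwise arguments at irregular boundary points and on the $\ell$-null sets where the coalescence and simultaneous-exit descriptions require care.
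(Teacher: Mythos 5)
This statement is quoted verbatim from Solynin \cite[Theorem 2]{Sol}; the paper itself offers no proof, so there is no internal argument to match against, and your proposal has to be judged on its own. It holds up, and it is a genuinely different route from Solynin's original, which is analytic: there one compares sums and differences of harmonic measures of $\Omega$ and $\Omega^*$ on the symmetrized sets $\Omega\cap\Omega^*$ and $\Omega\cup\Omega^*$ via the (extended) maximum principle, checking signs of boundary values at regular points and absorbing irregular points into polar exceptional sets. Your mirror coupling buys a pathwise proof of (b) in essentially three lines: the hypothesis $(\Omega^-)^*\subset\Omega^+$ is precisely the survival implication (while $X_t\in\Omega^-$, the mirror path $Y_t=X_t^*$ is still in $\Omega$), and $E=E^*\subset\partial\Omega$ guarantees that if $X$ exits through $E$ before coalescence at a point $p\in H^-$, then $Y$ exits at that same instant at $p^*\in E$ (it cannot have exited earlier, by the survival implication), while after coalescence the exits coincide; taking expectations gives (b). Your reduction in (a) is also sound, and it is the right fix for the non-monotone comparison of $\Omega$ and $\Omega^*$: the hull $\Omega_0=\Omega\cup\Omega^*$ is connected because $z\in\ell\cap\Omega=\ell\cap\Omega^*$; the hypotheses $E\subset\partial\Omega\cap\partial\Omega^-$ and $E^*\subset\partial\Omega$ give exactly $E^*\subset\overline{H^+}\cap\partial\Omega\cap\partial\Omega^*\subset\partial\Omega_0$, which is what the Strong Markov Property \eqref{markov property} requires for both inclusions $\Omega\subset\Omega_0$ and $\Omega^*\subset\Omega_0$; the interior boundary pieces $\partial\Omega\cap\Omega_0\subset H^-$ and $\partial\Omega^*\cap\Omega_0\subset H^+$ are mirror images of one another (using $\ell\cap\Omega=\ell\cap\Omega^*$, so neither meets $\ell$); and your symmetric-domain lemma, applied with $G=\Omega_0$, makes the bracket in your display nonnegative, yielding $\omega(z,E^*,\Omega)\geq\omega(z,E^*,\Omega^*)=\omega(z,E,\Omega)$, which is (a).

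Two small points you should make explicit in a final write-up. First, dispose separately of the degenerate case where $\C\setminus\Omega_0$ is polar: then the probabilistic representation and \eqref{markov property} for $\Omega_0$ are vacuous, but $E^*\subset\partial\Omega_0$ is itself polar, so $\omega(z,E,\Omega)=\omega(z,E^*,\Omega)=0$ and (a) is trivial. Second, in (b) the phrase ``$Y$ sits at $p^*$ and is still alive'' should be split into the two cases $p\notin E$ (nothing to prove) and $p\in E$ (then $p^*\in E^*=E\subset\partial\Omega$, so $Y$ exits simultaneously at $p^*\in E$); as written the sentence momentarily asserts both that $Y$ is alive and that it exits. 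The identification of the PWB harmonic measure with the Brownian exit distribution in Greenian domains, and the null-set/irregular-point caveats, are standard (Port--Stone \cite{PS}) and you flag them appropriately.
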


\subsection{Green function and hyperbolic metric}
\label{subsec:green}
A Borel set $E \subset \C$ is called polar if
$$\int_{\C}\int_{\C}\log\abs{z-w}d\mu(z)d\mu(w) = -\infty$$
for all Borel probabilities $\mu$ whose support is a compact set lying on $E$. A property is said to hold for {\it nearly every} point  of a set $A \subset \C$ (briefly,  {\it n.e.}) if there exists a polar subset $E \subset A$ such that the property holds on $A \setminus E$.
For more details on this concept we refer to \cite[Sections 3.2 and 5.1]{Ransford}.

\medskip

A Green function for a domain $D \subset \C$ is a map $g_D \colon D \times D \to (-\infty,+\infty]$ such that, for all $w \in D$, the following properties hold:\\
{\rm (i)} The function $z \mapsto g_D(z,w)$ is harmonic on $D \setminus \{w\}$ and bounded outside every neighbourhood of $w$.\\
{\rm (ii)} $g_D(w,w) = +\infty$; moreover, $g(z,w) = -\log\abs{z-w}+\mathrm{O}(1)$ as $z \to w$.\\
{\rm (iii)} For nearly every $\xi \in \partial D$, $g_D(z,w) \to 0$ as $z \to \xi$.

\medskip

A domain $D \subset \C$ possesses a (necessarily unique) Green function if and only if $\C \setminus \D$ is not a polar set; see \cite[Theorem 4.4.2]{Ransford}. As stated before, such domains are called Greenian. In particular, if $D$ is a Greenian domain, then $g_D(z,w) > 0$ for all $z,w \in D$; see \cite[Theorem 4.4.3]{Ransford}. Thus, the function $\psi_D$ defined on \eqref{eq:integralmean} is non-negative. In the case of the unit disk $\D$, its Green function is given by
\begin{equation*}
g_\D(0,z)=\log\frac{1}{|z|},\;\;\;z\in \D.
\end{equation*}
We will use the following relation with universal covering maps:
\begin{theorem}
	{\normalfont(see e.g. \cite[Theorem 15.6]{Marshall})}
	\label{thm:green}
	Let $D \subset \C$ be a domain with Green function $g_D$ and universal covering map $f_D \colon \D \to D$. Then, for all $a \in \D$ and all $w \in D$, $w \neq f(a)$, it holds
	$$g_D(f(a),w) = \sum_{j}g_{\D}(a,z_j(w)),$$
	where $\{z_j(w)\} \subset \D$ are all the preimages of $w$ by $f_D$.
\end{theorem}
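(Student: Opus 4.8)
The plan is to verify that the right-hand side, regarded as a function of the point $f_D(a)$, satisfies the three defining properties of the Green function of $D$ with pole at $w$, and then to invoke the uniqueness of the Green function. Fix $w\in D$, let $\Gamma\subset\mathrm{Aut}(\D)$ be the group of deck transformations of the covering $f_D$, and note that the preimage set $\{z_j(w)\}_j=f_D^{-1}(w)$ is a single $\Gamma$-orbit. Define
$$u(a)=\sum_j g_\D(a,z_j(w)),\qquad a\in\D.$$
First I would establish convergence of this series. Writing $g_\D(a,z)=\log|1-\bar z a|-\log|a-z|$, one has $g_\D(a,z_j)\asymp (1-|a|^2)(1-|z_j|^2)/|1-\bar z_j a|^2$ as $z_j\to\partial\D$, so the sum converges exactly when $\{z_j(w)\}$ satisfies the Blaschke condition. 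This is precisely where the Greenian hypothesis enters: $\C\setminus D$ being non-polar forces the orbit to be a Blaschke sequence (equivalently, the pulled-back pole mass is summable), so $u$ is finite off the orbit.

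Next I would show that $u$ descends to $D$. Each $\gamma\in\Gamma$ is a Möbius automorphism of $\D$ and $g_\D$ is conformally invariant, so $g_\D(\gamma(a),z_j)=g_\D(a,\gamma^{-1}(z_j))$; since $\gamma^{-1}$ merely permutes the orbit $\{z_j(w)\}$, a rearrangement gives $u(\gamma(a))=u(a)$. Hence there is a well-defined function $v\colon D\to(-\infty,+\infty]$ with $u=v\circ f_D$, and the assertion of the theorem is exactly that $v(\cdot)=g_D(\cdot,w)$.

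Then I would check the Green-function axioms for $v$. Property (i) holds because $u$ is a locally finite sum of positive harmonic functions, hence harmonic and positive on $\D\setminus f_D^{-1}(w)$, and $f_D$ is a local biholomorphism, so $v$ is harmonic on $D\setminus\{w\}$ and bounded away from $w$. For property (ii), near a single preimage $z_{j_0}$ the term $g_\D(a,z_{j_0})=-\log|a-z_{j_0}|+\mathrm{O}(1)$ dominates the (harmonic) remaining sum; transporting this singularity through a local inverse of $f_D$ turns it into $-\log|\zeta-w|+\mathrm{O}(1)$ as $\zeta\to w$, which is the required logarithmic pole.

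The hard part will be property (iii): that $v(\zeta)\to 0$ as $\zeta\to\xi$ for nearly every $\xi\in\partial D$. On the disk side this amounts to $u(a)\to 0$ as $a$ approaches $\partial\D$ outside an exceptional set, which I would treat by combining the boundary vanishing of each $g_\D(\cdot,z_j)$ with a tail estimate for the Blaschke series (a dominated-convergence or normal-families argument controlling the sum uniformly near the boundary). The genuinely delicate point is transferring this through the covering: relating the polar exceptional set on $\partial D$ to a correspondingly negligible set on $\partial\D$, using the boundary correspondence of $f_D$ and the fact that almost every radius of $\D$ is carried to a curve in $D$ landing at $\partial D$. Once (iii) is secured, uniqueness of the Green function on the Greenian domain $D$ forces $v=g_D(\cdot,w)$, giving $g_D(f_D(a),w)=\sum_j g_\D(a,z_j(w))$.
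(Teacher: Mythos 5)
The paper offers no proof of this statement; it is quoted from Marshall, so your argument has to stand on its own. Your setup is sound: the sum $u(a)=\sum_j g_\D(a,z_j(w))$ is invariant under the deck group $\Gamma$, descends to a function $v$ on $D$, and has the correct logarithmic pole at $w$; identifying $u$ with $-\log|B|$ for the Blaschke product over the orbit is also the right picture. The genuine gap is the step you yourself flag as delicate but never close: axiom (iii), that $v(\zeta)\to 0$ as $\zeta\to\xi$ for nearly every $\xi\in\partial D$. Your plan --- prove $u\to0$ outside an exceptional subset of $\partial\D$ and then ``transfer'' this through $f_D$ --- does not go through for a general Greenian domain: $f_D$ need not extend to $\partial\D$ in any usable sense (consider $D$ infinitely connected with totally disconnected boundary of positive capacity), the exceptional set for radial limits of $|B|$ is controlled in Lebesgue measure on $\partial\D$ whereas the uniqueness theorem for $g_D$ demands a polar exceptional set on $\partial D$, and there is no general dictionary between the two. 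Since the whole proof hinges on invoking uniqueness, this is not a cosmetic omission.

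The standard way to finish --- and the way to repair your argument --- avoids boundary behaviour entirely by a two-sided comparison with $h:=g_D(f_D(\cdot),w)$, a positive superharmonic function on $\D$ with a unit logarithmic pole at each $z_j(w)$. For each finite $N$ the function $h-\sum_{j\le N}g_\D(\cdot,z_j(w))$ is superharmonic on $\D$, bounded below near $\partial\D$, and has nonnegative lower limit at every point of $\partial\D$ (each $g_\D(\cdot,z_j(w))$ vanishes there and $h\ge0$), so the minimum principle gives $\sum_{j\le N}g_\D(\cdot,z_j(w))\le h$; letting $N\to\infty$ yields both the convergence of your series (so the separate Blaschke-condition discussion becomes unnecessary) and $u\le h$. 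Conversely, $h-u$ is a nonnegative $\Gamma$-invariant harmonic function on $\D$, hence descends to a nonnegative harmonic minorant of $g_D(\cdot,w)$ on $D$; since the Green function is a potential, its greatest harmonic minorant is $0$, whence $h=u$. This comparison argument is what the cited theorem of Marshall encapsulates.
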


Another basic property of the Green function is its domain monotonicity: If $D,\Omega$ are Greenian domains and $D\subset \Omega$, then 
\begin{equation*}
g_D(z,w)\leq g_\Omega(z,w),\;\;\;z,w\in D.
\end{equation*}
A more precise relation between $g_D$ and $g_\Omega$ is established by the Strong Markov Property (see \cite[p. 54]{PS}): For $z,w\in D\subset \Omega$,
\begin{equation}
\label{smp}
g_\Omega(z,w)=g_{D}(z,w)+\int_{\partial D\cap\Omega}g_\Omega(\zeta,w)\omega(z,d\zeta, D).
\end{equation}

\medskip

We will need some basic properties of the hyperbolic metric. We refer to \cite{BCD} and \cite{Hay} for detailed accounts. First we recall the definition of the hyperbolic distance for the unit disk. 
If $z_1,z_2\in \D$, then 
$$
\rho_\D(z_1,z_2)=\frac{1}{2}\log\frac{1+\sigma(z_1,z_2)}{1-\sigma(z_1,z_2)},\;\;\;\sigma(z_1,z_2)=\left |\frac{z_1-z_2}{1-\overline{z_1}z_2}\right |.
$$

For a hyperbolic domain $D \subset \C$, the hyperbolic distance $\rho_D$ is defined as follows: Let $\zeta,w\in D$. Consider a universal covering map (for $D$) $f_D:\D\to D$ with $f_D(0)=\zeta$. Let $z_j(w), j=1,2,\dots$, be the preimages of $w$ under $f_D$ with $|z_1(w)|\leq |z_2(w)|\leq\dots$. Then 
\begin{equation}
\label{hd}
\rho_D(\zeta,w):=\rho_\D(0,z_1(w)) = \min_{\substack{z \in \D \\ f(z) = w}}\rho_{\D}(0,z).
\end{equation}
  
The following lemma presents some relations between Green function and hyperbolic distance. 

\begin{lemma}\label{PL1}
	{\rm (a)} Let $D\subset \C$ be a simply connected domain. Then for $z,w\in D,\; z\neq w$,
	\begin{equation}\label{PL1e1}
	g_D(z,w)=\log\frac{1+e^{-2\rho_D(z,w)}}{1-e^{-2\rho_D(z,w)}}.
	\end{equation}
	{\rm (b)} 	Let $D\subset \C$ be a simply connected domain. If $z,w\in D$ and $\rho_D(z,w)\geq \frac{\log 2}{2}$, then
	\begin{equation}
    \label{PL1e2}
	g_D(z,w)\leq 4\,e^{-2\rho_D(z,w)}.
	\end{equation}
	{\rm (c)}	If $D\subset \C$ is a Greenian domain, then for $z,w\in D,\; z\neq w$,
	\begin{equation}
    \label{PL1e3}
	g_D(z,w)\geq e^{-2\rho_D(z,w)}.
	\end{equation}	
\end{lemma}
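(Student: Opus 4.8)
The plan is to treat the three parts in order, reducing (a) to an explicit computation on the disk, deducing (b) from (a) by a one-variable estimate, and handling (c) by lifting to the universal cover. For part (a), I would first record the two disk identities $g_\D(0,\zeta)=\log(1/|\zeta|)$ and $\rho_\D(0,\zeta)=\tfrac12\log\frac{1+|\zeta|}{1-|\zeta|}$, from which $|\zeta|=\tanh\rho_\D(0,\zeta)$ and hence $g_\D(0,\zeta)=\log\coth\rho_\D(0,\zeta)=\log\frac{1+e^{-2\rho_\D(0,\zeta)}}{1-e^{-2\rho_\D(0,\zeta)}}$. Since $D$ is simply connected, I would pick a conformal map $\phi\colon D\to\D$ with $\phi(w)=0$ and invoke the conformal invariance of both the Green function and the hyperbolic distance to get $g_D(z,w)=g_\D(\phi(z),0)$ and $\rho_D(z,w)=\rho_\D(\phi(z),0)$; substituting $\zeta=\phi(z)$ into the disk identity then yields \eqref{PL1e1}.

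For part (b), writing $t=e^{-2\rho_D(z,w)}$, the hypothesis $\rho_D(z,w)\ge\frac{\log2}{2}$ is exactly $t\le\frac12$, and by (a) the claim becomes $\log\frac{1+t}{1-t}\le4t$ on $(0,\tfrac12]$. I would prove this by checking that $h(t)=4t-\log\frac{1+t}{1-t}$ satisfies $h(0)=0$ and $h'(t)=4-\frac{2}{1-t^2}>0$ there, since $1-t^2\ge\frac34$ forces $\frac{2}{1-t^2}\le\frac83<4$; thus $h\ge0$ on $(0,\tfrac12]$, which is \eqref{PL1e2}.

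For part (c), which is the step that goes beyond simple connectivity, I would use the universal covering map $f_D\colon\D\to D$ (which exists because Greenian domains are hyperbolic) normalized so that $f_D(0)=z$. By Theorem \ref{thm:green} with $a=0$, $g_D(z,w)=\sum_j g_\D(0,z_j(w))=\sum_j\log\frac{1}{|z_j(w)|}$, where $\{z_j(w)\}$ are the preimages of $w$; since every term is nonnegative, $g_D(z,w)\ge\log\frac{1}{|z_1(w)|}$, where $z_1(w)$ is a preimage of minimal modulus. With the same normalization, \eqref{hd} gives $\rho_D(z,w)=\rho_\D(0,z_1(w))$, so with $t=e^{-2\rho_D(z,w)}$ we have $|z_1(w)|=\tanh\rho_D(z,w)$ and $\log\frac{1}{|z_1(w)|}=\log\frac{1+t}{1-t}$ exactly as in (a). Finally I would invoke the elementary inequality $\log\frac{1+t}{1-t}\ge2t$ for $t\in(0,1)$ (again by a sign-of-derivative argument, since $\frac{d}{dt}\big(\log\frac{1+t}{1-t}-2t\big)=\frac{2t^2}{1-t^2}\ge0$) to conclude $g_D(z,w)\ge2t\ge t=e^{-2\rho_D(z,w)}$, which is \eqref{PL1e3}. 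The one point to get right, and the main source of friction, is the bookkeeping in (c): one must use the \emph{same} normalized covering map in both Theorem \ref{thm:green} and the definition \eqref{hd}, so that the term $\log(1/|z_1(w)|)$ singled out from the Green function sum genuinely corresponds to the minimal-modulus preimage that computes $\rho_D(z,w)$. Once the preimages are aligned, the remaining estimates in (b) and (c) are routine calculus.
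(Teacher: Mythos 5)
Your proposal is correct and follows essentially the same route as the paper: the explicit disk formulas plus conformal invariance for (a), an elementary one-variable estimate of $\log\frac{1+t}{1-t}$ on $(0,\tfrac12]$ for (b), and lifting via the universal covering map (Theorem \ref{thm:green}) together with the bound $\log\frac{1+t}{1-t}\ge t$ for (c). The only differences are cosmetic (a sign-of-derivative check in place of the paper's $\log(1+x)\le x$ in (b), and proving the slightly stronger $\ge 2t$ before discarding the factor $2$ in (c)).
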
	
\begin{proof}
	(a) If $D=\D$ and $z=0$, then (\ref{PL1e1}) is an easy consequence of the explicit formulas for the Green function and the hyperbolic distance on $\D$. The general case follows from conformal invariance.\\
	(b) Because of (\ref{PL1e1}) and the inequality $\rho_D(z,w)\geq \frac{\log 2}{2}$, we have
	\begin{eqnarray}
    \label{PL1p1}
	g_D(z,w)&=&\log\frac{1+e^{-2\rho_D(z,w)}}{1-e^{-2\rho_D(z,w)}}=\log\left (1+\frac{2e^{-2\rho_D(z,w)}}{1-e^{-2\rho_D(z,w)}}\right ) \\&\leq & \frac{2e^{-2\rho_D(z,w)}}{1-e^{-2\rho_D(z,w)}}\leq 4\,e^{-2\rho_D(z,w)}.\nonumber
	\end{eqnarray}
	(c) Let $z,w\in D,\; z\neq w$. Consider a universal covering map $f_D \colon \D \to D$ with $f_D(0)=z$. Let $z_j(w)$, $j=1,2,\dots$, be the preimages of $w$ under $f_D$ with $\abs{z_1(w)}\leq \abs{z_2(w)}\leq \dots$.  By (\ref{thm:green}) and  (\ref{PL1e1}),
	\begin{eqnarray}
    \label{PL1p2}
	g_D(z,w)& =& \sum_{j}g_{\D}(0,z_j(w))\geq g_\D(0,z_1(w))\\&=&\log\frac{1+e^{-2\rho_\D(0,z_1(w))}}{1-e^{-2\rho_\D(0,z_1(w))}}.\nonumber
	\end{eqnarray}
	Using the elementary inequality $\log\frac{1+x}{1-x}\geq x$, $0<x<1$, and (\ref{hd}), we infer from (\ref{PL1p1}) that
	\begin{eqnarray}
    \label{PL1p3}
	g_D(z,w)\geq e^{-2\rho_\D(0,z_1(w))}=e^{-2\rho_D(z,w)}.
	\end{eqnarray}
\end{proof}


\subsection{Littlewood-Paley formulas.}
In order to relate the Hardy and Bergman number of a domain with its Green function, the following classical characterizations will be used.
\begin{theorem}
	\cite[Theorem 1]{Yamashita}
	\label{thm:Hp}
	Let $f \colon \D \to \C$ be a holomorphic function. For $0 < p < +\infty$, $f \in H^p$ if and only if
	$$\int_{\D}\abs{f(z)}^{p-2}\abs{f'(z)}^2\log(1/\abs{z})dA(z) < +\infty.$$
\end{theorem}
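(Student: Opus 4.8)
The plan is to prove this classical Littlewood--Paley characterization by a Green's-formula (Riesz decomposition) argument applied to the subharmonic function $\abs{f}^p$. The starting point is the elementary identity, valid wherever $f \neq 0$,
$$\Delta \abs{f}^p = p^2\,\abs{f}^{p-2}\abs{f'}^2,$$
which follows from a direct computation with $\Delta = 4\partial_z\partial_{\bar z}$, using that $f$ is holomorphic (so $\partial_{\bar z} f = 0$). In particular $\abs{f}^p$ is subharmonic, so the integral means $M_p(r)^p := \frac{1}{2\pi}\int_0^{2\pi}\abs{f(re^{i\theta})}^p\,d\theta$ are nondecreasing in $r$, whence $f \in H^p$ if and only if $\lim_{r\to 1^-}M_p(r)^p < +\infty$. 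The whole argument reduces the finiteness of this limit to the finiteness of the stated area integral.

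The key exact identity is, for each fixed $r \in (0,1)$,
$$M_p(r)^p - \abs{f(0)}^p = \frac{p^2}{2\pi}\int_{\abs{z}<r}\log\frac{r}{\abs{z}}\,\abs{f(z)}^{p-2}\abs{f'(z)}^2\,dA(z).$$
This is precisely the Riesz representation of the subharmonic function $\abs{f}^p$ on the disk $\{\abs{z}<r\}$, whose Green function with pole at the origin is $\log(r/\abs{z})$; equivalently it results from applying Green's second identity to the pair $\abs{f}^p$ and $\log(r/\abs{z})$, the boundary term reproducing the mean $M_p(r)^p$ and the logarithmic singularity contributing $\abs{f(0)}^p$.

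The main obstacle is that $\abs{f}^p$ is not smooth at the zeros of $f$: when $p < 2$ the density $\abs{f}^{p-2}\abs{f'}^2$ is singular there, so Green's identity cannot be applied to $\abs{f}^p$ directly. I would circumvent this by regularizing, replacing $\abs{f}^p$ by the smooth function $u_\varepsilon = (\abs{f}^2+\varepsilon^2)^{p/2}$, for which the same computation gives
$$\Delta u_\varepsilon = 2p\,(\abs{f}^2+\varepsilon^2)^{\frac p2 -2}\left(\tfrac{p}{2}\abs{f}^2+\varepsilon^2\right)\abs{f'}^2 \geq 0.$$
Applying the identity above to $u_\varepsilon$ and letting $\varepsilon\to 0^+$ recovers the identity for $\abs{f}^p$. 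The left side converges by continuity of $f$ on $\{\abs{z}\le r\}$; for the right side, when $p\ge 2$ the density is continuous and there is no singularity, while for $p<2$ one has the domination $\Delta u_\varepsilon \le 2p\,(\abs{f}^2+\varepsilon^2)^{\frac p2-1}\abs{f'}^2 \le 2p\,\abs{f}^{p-2}\abs{f'}^2$, whose weighted integral over the fixed disk $\{\abs{z}<r\}$ is finite (near a zero $a$ of order $m$ the limiting density behaves like $\abs{z-a}^{mp-2}$, which is area-integrable since $mp-2>-2$). Dominated convergence then justifies the passage to the limit. (Alternatively one excises small disks about the zeros and checks the resulting boundary integrals vanish as the radii shrink.)

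Finally I would let $r\to 1^-$ in the identity for $\abs{f}^p$. The integrand is nonnegative and $\log(r/\abs{z})\nearrow\log(1/\abs{z})$ on expanding disks, so the monotone convergence theorem yields
$$\lim_{r\to 1^-}M_p(r)^p = \abs{f(0)}^p + \frac{p^2}{2\pi}\int_{\D}\log\frac{1}{\abs{z}}\,\abs{f(z)}^{p-2}\abs{f'(z)}^2\,dA(z).$$
Since the left-hand limit is finite exactly when $f\in H^p$, and $\abs{f(0)}^p$ is a finite constant, $f\in H^p$ is equivalent to the finiteness of $\int_{\D}\abs{f}^{p-2}\abs{f'}^2\log(1/\abs{z})\,dA$; the multiplicative constants $p^2$ and $1/2\pi$ are irrelevant to this equivalence. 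This is exactly the asserted characterization, and the technical crux is the treatment of the zeros of $f$ described in the previous paragraph.
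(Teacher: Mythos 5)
Your proof is correct. Note that the paper does not actually prove this statement: it cites it to Yamashita and only adds a remark that the weight $\log(1/\abs{z})$ differs from the one in Yamashita's Theorem 1 (which uses $1-\abs{z}^2$) but is equivalent. Your argument is the standard Littlewood--Paley/Riesz route that underlies Yamashita's result: the identity $\Delta\abs{f}^p=p^2\abs{f}^{p-2}\abs{f'}^2$ off the zero set, the Green/Jensen formula
$M_p(r)^p-\abs{f(0)}^p=\frac{p^2}{2\pi}\int_{\abs{z}<r}\log\frac{r}{\abs{z}}\,\abs{f}^{p-2}\abs{f'}^2\,dA$,
and monotone convergence as $r\to1^-$. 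Your handling of the zeros via the regularization $u_\varepsilon=(\abs{f}^2+\varepsilon^2)^{p/2}$ is sound: the computation of $\Delta u_\varepsilon$ is right, the domination $\Delta u_\varepsilon\le 2p\abs{f}^{p-2}\abs{f'}^2$ for $p\le 2$ holds, and the local integrability near a zero of order $m$ (density $\asymp\abs{z-a}^{mp-2}$) justifies dominated convergence on each fixed disk. A side benefit of your derivation is that it yields the $\log(1/\abs{z})$ form directly, so it simultaneously disposes of the paper's remark about the equivalence of the two integrability conditions (the only point needing care there is the origin, where $\log(1/\abs{z})$ blows up but remains locally integrable against the density). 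The only cosmetic caveat is the degenerate case $f\equiv 0$, where $\abs{f}^{p-2}\abs{f'}^2$ should be read as $0$.
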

\begin{remark}
	The integrability condition of the latter result is not the one stated in \cite[Theorem 1]{Yamashita}, but it is easy to see that they are equivalent.
\end{remark}

\begin{theorem}
	{\normalfont \cite[Lemma 2.3]{Smith}}
	\label{thm:Apa}
	Let $f \colon \D \to \C$ be a holomorphic function. For $0 < p < +\infty$ and $\alpha > -1$, $f \in A^p_{\alpha}$ if and only if
	$$\int_{\D}\abs{f(z)}^{p-2}\abs{f'(z)}^2\log(1/\abs{z})^{\alpha+2}dA(z) < +\infty.$$
\end{theorem}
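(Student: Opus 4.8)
The plan is to deduce the stated characterization from the Littlewood--Paley (Green's) identity together with a one--variable weight computation, which is the classical route for such results. Since $f$ is holomorphic, a direct computation gives $\Delta\abs{f}^p = p^2\abs{f}^{p-2}\abs{f'}^2$ on $\{f\neq 0\}$; as a zero of order $m$ contributes only the locally integrable density $\abs{z-a}^{mp-2}$ and no point mass (because $mp>0$), this identity holds in the sense of locally integrable functions on all of $\D$. Green's formula on the disk $\{\abs{z}<r\}$ then yields, for every $0<r<1$,
\[
\frac{1}{2\pi}\int_0^{2\pi}\abs{f(re^{i\theta})}^p\,d\theta = \abs{f(0)}^p + \frac{p^2}{2\pi}\int_{\abs{z}<r}\abs{f(z)}^{p-2}\abs{f'(z)}^2\log\tfrac{r}{\abs{z}}\,dA(z).
\]

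First I would rewrite the Bergman condition in polar coordinates. Since $\log(1/\abs{z})\asymp 1-\abs{z}$ as $\abs{z}\to 1$, and both weighted integrals are automatically finite over any compact subset of $\D$, membership $f\in A^p_\alpha$ is equivalent to finiteness of $\int_\D\abs{f}^p(\log(1/\abs{z}))^{\alpha}\,dA$, which in polar form equals $2\pi\int_0^1(\log(1/r))^{\alpha}M(r)\,r\,dr$, where $M(r)$ denotes the left-hand side of the identity above. Substituting the Littlewood--Paley identity, the contribution of the constant $\abs{f(0)}^p$ is a finite multiple of $\int_0^1(\log(1/r))^{\alpha}r\,dr<\infty$ (here $\alpha>-1$ is used), and is therefore harmless for the question of finiteness.

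The core step is to handle the remaining double integral. As the integrand $\abs{f}^{p-2}\abs{f'}^2$ is nonnegative, Tonelli's theorem lets me exchange the order of integration to obtain
\[
\int_\D \abs{f(z)}^{p-2}\abs{f'(z)}^2\,K(\abs{z})\,dA(z),\qquad K(s)=\int_s^1\Bigl(\log\tfrac1r\Bigr)^{\alpha}\log\tfrac rs\;r\,dr.
\]
A short computation (substituting $r=e^{-t}$ and writing $s=e^{-\varepsilon}$) gives $K(s)=\int_0^\varepsilon t^\alpha(\varepsilon-t)e^{-2t}\,dt\asymp\varepsilon^{\alpha+2}=(\log(1/s))^{\alpha+2}$ as $s\to 1^-$. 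Since $\abs{f}^{p-2}\abs{f'}^2$ is locally integrable on $\D$, both $\int_\D\abs{f}^{p-2}\abs{f'}^2K(\abs{z})\,dA$ and $\int_\D\abs{f}^{p-2}\abs{f'}^2(\log(1/\abs{z}))^{\alpha+2}\,dA$ are finite over compact subsets, so their finiteness is decided near $\partial\D$, where the two weights are comparable. This gives the asserted equivalence.

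I expect the main obstacle to be the rigorous justification of the Green/Littlewood--Paley identity at the zeros of $f$, particularly when $0<p<2$ so that $\abs{f}^{p-2}$ is unbounded there; this is handled by first establishing the identity on $\{f\neq 0\}$, excising small disks about the isolated zeros and letting their radii shrink, and invoking that the distributional Laplacian of the subharmonic function $\abs{f}^p$ carries no singular part. The only other delicate point is the two--sided comparison of $K(s)$ with $(\log(1/s))^{\alpha+2}$ near the boundary and the interchange between the weights $(1-\abs{z})^\alpha$ and $(\log(1/\abs{z}))^\alpha$; both reduce to the elementary boundary asymptotics noted above and use $\alpha>-1$ to ensure convergence.
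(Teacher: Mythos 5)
The paper does not prove this statement at all: it is quoted verbatim as \cite[Lemma 2.3]{Smith}, so there is no internal argument to compare yours against. Your reconstruction is the standard (and correct) route to such Littlewood--Paley characterizations: the identity $\Delta\abs{f}^p=p^2\abs{f}^{p-2}\abs{f'}^2$, the resulting Hardy--Stein formula for the integral means, integration against the radial weight, Tonelli, and the two-sided estimate $K(s)\asymp(\log(1/s))^{\alpha+2}$ as $s\to1^-$ (your substitution $r=e^{-t}$ gives $K(s)=\int_0^{\varepsilon}t^{\alpha}(\varepsilon-t)e^{-2t}\,dt$ with $\varepsilon=\log(1/s)$, and on $\varepsilon\le\log 2$ the factor $e^{-2t}$ is bounded between $1/4$ and $1$, so the comparison is indeed two-sided with absolute constants). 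The treatment of the zeros is also right: a zero of order $m$ contributes the density $\abs{z-a}^{mp-2}$, locally integrable and with no atom since $mp>0$, which is exactly what justifies the Riesz/Green representation for $0<p<2$.

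One small point deserves a sentence more than you gave it: you dismiss the region away from $\partial\D$ by saying that local integrability of $\abs{f}^{p-2}\abs{f'}^2$ makes both weighted integrals finite on compact subsets, but near $z=0$ the weights $K(\abs{z})$ and $(\log(1/\abs{z}))^{\alpha+2}$ are unbounded, so local integrability alone does not literally suffice. The fix is immediate --- either note that $(\log(1/r))^{\alpha+2}\le C_{\delta}r^{-\delta}$ for any $\delta>0$ and choose $\delta<mp$ if $f$ vanishes at the origin, or just compute $\int_0 r^{mp-1}(\log(1/r))^{\alpha+2}\,dr<\infty$ directly --- but it should be said. With that patched, the proof is complete and is, as far as one can tell, essentially the argument behind Smith's lemma itself.
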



\subsection{The Ess\'en-Kim-Sugawa formula.}
\label{subsec:EKS}
Kim and Sugawa \cite[Lemma 3.2]{KimSugawa}, based on an older theorem of Ess\'en \cite[Lemma 1]{Essen},
proved the following result: Let $D$ be a domain containing the origin. For $R>0$,
let $\Delta_R=\{z\in\C: |z|<R\}$ and $D_R$ be the connected component of $D \cap \Delta_R$ containing $0$.  Set
$\omega(R)=\omega(0, \partial D_R \cap \partial\Delta_R, D_R)$. Then
\begin{equation}
\label{KS}
\h(D)= \liminf_{R\to +\infty}\left(-\frac{\log \omega(R)}{\log R}\right).
\end{equation}

In particular, if a domain $D \subset \C$ is not Greenian (i.e., $\C \setminus D$ is polar), then $\h(D) = \h(\C) = 0$; see \cite[Theorem 2.4.(i)]{KimSugawa}. This also follows from an older result due to Nevanlinna.

\subsection{Inclusions among Hardy and weighted Bergman spaces.}
Inclusions between Hardy spaces are simple and very well known \cite[p. 2]{Duren-Hp}: $H^q \subset H^p$ if and only if $0 < p \leq q \leq +\infty$. This is, indeed, what motivates the definition of the Hardy number. There are also classical results relating Hardy and Bergman spaces. For example:
\begin{theorem}
	\label{thm:HqsubsetApa}
	{\normalfont (see \cite[Theorem 5.11]{Duren-Hp})}
	Let $p,q > 0$ and $\alpha > -1$. If $q \geq p/(\alpha+2)$, then $H^q \subset A^p_{\alpha}$.  In particular, $H^p \subset A^p_{\alpha}$ for all $p > 0$, $\alpha > -1$.
\end{theorem}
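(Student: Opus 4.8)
Although this is the classical inclusion \cite[Theorem 5.11]{Duren-Hp}, the plan is to derive it directly from the Littlewood--Paley characterizations already recorded in Theorems \ref{thm:Hp} and \ref{thm:Apa}, by comparing the two area integrals that appear in those criteria. Since $\alpha+2>1$, the hypothesis $q\geq p/(\alpha+2)$ holds automatically whenever $q\geq p$ (because $p/(\alpha+2)\leq p\leq q$). I would therefore split into the case $p\leq q$ and the case $p>q$, and in the first case it suffices to prove the ``in particular'' assertion $H^p\subset A^p_\alpha$ and then invoke $H^q\subset H^p$.

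For $p\leq q$, let $f\in H^p$. By Theorem \ref{thm:Hp} the quantity $\int_{\D}\abs{f}^{p-2}\abs{f'}^2\log(1/\abs{z})\,dA$ is finite, and I would factor the integrand of the $A^p_\alpha$-criterion of Theorem \ref{thm:Apa} as
\begin{equation*}
\abs{f}^{p-2}\abs{f'}^2\left(\log\tfrac{1}{\abs{z}}\right)^{\alpha+2}=\left[\abs{f}^{p-2}\abs{f'}^2\log\tfrac{1}{\abs{z}}\right]\left(\log\tfrac{1}{\abs{z}}\right)^{\alpha+1}.
\end{equation*}
Using $\log(1/\abs{z})\asymp 1-\abs{z}$ near $\partial\D$ and $\alpha+1>0$, the extra factor $(\log\frac{1}{\abs{z}})^{\alpha+1}$ is bounded on $\{\abs{z}>1/2\}$, so the $A^p_\alpha$-integral there is dominated by a constant times the (finite) $H^p$-integral. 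On $\{\abs{z}\leq 1/2\}$ the $A^p_\alpha$-integrand is locally integrable for any holomorphic $f$ (the only singularities are at the zeros of $f$ and at $z=0$, and both integrals converge because $p>0$). Hence $f\in A^p_\alpha$.

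For $p>q$ I would run the same comparison, but now with the two different powers of $\abs{f}$. Let $f\in H^q$, so that $J_q:=\int_{\D}\abs{f}^{q-2}\abs{f'}^2\log(1/\abs{z})\,dA<+\infty$ by Theorem \ref{thm:Hp}, and write
\begin{equation*}
\abs{f}^{p-2}\abs{f'}^2\left(\log\tfrac{1}{\abs{z}}\right)^{\alpha+2}=\left[\abs{f}^{q-2}\abs{f'}^2\log\tfrac{1}{\abs{z}}\right]\cdot\abs{f}^{p-q}\left(\log\tfrac{1}{\abs{z}}\right)^{\alpha+1}.
\end{equation*}
The crucial step is to bound the second factor near $\partial\D$. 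For this I would invoke the standard growth estimate $\abs{f(z)}\leq C_q\,\norm{f}_{H^q}(1-\abs{z})^{-1/q}$ for $f\in H^q$ (a consequence of the sub-mean-value property of the subharmonic function $\abs{f}^q$ on a disc of radius $\tfrac12(1-\abs{z})$, integrating the bounded angular means). Together with $\log(1/\abs{z})\asymp 1-\abs{z}$ this gives, on $\{\abs{z}>1/2\}$,
\begin{equation*}
\abs{f(z)}^{p-q}\left(\log\tfrac{1}{\abs{z}}\right)^{\alpha+1}\leq C\,\norm{f}_{H^q}^{p-q}\,(1-\abs{z})^{\alpha+2-p/q},
\end{equation*}
and the exponent $\alpha+2-p/q$ is nonnegative precisely when $q\geq p/(\alpha+2)$. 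Thus the second factor is bounded there, the $A^p_\alpha$-integral over $\{\abs{z}>1/2\}$ is at most $C\norm{f}_{H^q}^{p-q}J_q<+\infty$, and on $\{\abs{z}\leq 1/2\}$ local integrability again applies (note $p-q>0$, so $\abs{f}^{p-q}$ is harmless at the zeros of $f$). Hence $f\in A^p_\alpha$.

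The main obstacle is the regime $p>q$, and in particular the borderline equality $q=p/(\alpha+2)$. A cruder estimate through integral means, such as $M_p^p(r)\leq M_\infty(r)^{p-q}M_q^q(r)$, produces a non-integrable weight $(1-r)^{-1}$ at the endpoint and therefore fails there; the point of phrasing the comparison at the level of the Littlewood--Paley integrands is that the endpoint survives because the surplus exponent $\alpha+2-p/q$ is then exactly $0$. The only quantitative input is the $H^q$ growth estimate, and the bookkeeping of this weight exponent is what reproduces the sharp condition $q\geq p/(\alpha+2)$ with equality allowed.
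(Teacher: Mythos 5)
Your argument is correct, but it is worth noting that the paper does not prove this statement at all: Theorem \ref{thm:HqsubsetApa} is quoted as a classical result with a citation to Duren's book, so there is no in-paper proof to match. What you have produced is a self-contained derivation from the two Littlewood--Paley criteria (Theorems \ref{thm:Hp} and \ref{thm:Apa}) that the paper already records, and the details check out: the reduction of the case $p\leq q$ to the embedding $H^p\subset A^p_\alpha$ via $H^q\subset H^p$ is legitimate; the pointwise growth bound $\abs{f(z)}\leq C\norm{f}_{H^q}(1-\abs{z})^{-1/q}$ does follow from the sub-mean-value inequality for $\abs{f}^q$ on $D(z,\tfrac12(1-\abs{z}))$ once one restricts the disc to the annulus of width $1-\abs{z}$ (this is where the single, rather than doubled, power of $(1-\abs{z})^{-1}$ comes from, and it is the one place a careless reader could lose a factor); the exponent bookkeeping $\alpha+2-p/q\geq 0\iff q\geq p/(\alpha+2)$ is exact, so the endpoint $q=p/(\alpha+2)$ is genuinely captured, in contrast to the cruder integral-means comparison you rightly discard; and the local integrability near zeros of $f$ holds because $\abs{f}^{p-2}\abs{f'}^2\asymp\abs{z-z_0}^{mp-2}$ with $mp-2>-2$. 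The classical route in Duren proceeds through estimates on the integral means $M_p(r,f)$ and a Hardy--Littlewood-type argument at the endpoint; your route buys a uniform treatment of all cases, including the endpoint, at the cost of invoking the two area-integral characterizations as black boxes, which is a perfectly reasonable trade in the context of this paper since both are already stated and used elsewhere in it.
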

However, the reversed inclusions completely fail; see \cite[p. 80]{Duren-Apa} for a discussion and \cite[Theorem 5.10]{Duren-Hp} for a wider setting:
\begin{theorem}
	\label{thm:ApanotsubsetN}

	Let $p > 0$, $\alpha > -1$. The space $A^p_{\alpha}$ contains functions that are not in the Nevanlinna class, and therefore they are not in any Hardy space.
\end{theorem}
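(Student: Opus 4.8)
The plan is to prove the equivalent statement that $A^p_\alpha\not\subset\mathcal N$, where $\mathcal N$ denotes the Nevanlinna class (functions of bounded characteristic), by exhibiting an explicit $f\in A^p_\alpha$ that is not of bounded characteristic. Since every Hardy space satisfies $H^q\subset\mathcal N$ for all $q>0$ (see \cite[Chapter~2]{Duren-Hp}), such an $f$ automatically lies in no Hardy space, which yields the final assertion. I would search for $f$ among Hadamard lacunary series
\begin{equation*}
f(z)=\sum_{k=1}^\infty a_k z^{2^k},
\end{equation*}
because for these both the membership in $\mathcal N$ and the membership in $A^p_\alpha$ are governed by the single sequence $\{\abs{a_k}^2\}$, so the two requirements can be arranged independently.

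For the failure of bounded characteristic I would invoke the classical dichotomy for lacunary series: every function in $\mathcal N$ has finite nontangential limits at almost every boundary point (the Fatou--Nevanlinna theorem, \cite[Chapter~2]{Duren-Hp}), whereas a Hadamard lacunary series with $\sum_k\abs{a_k}^2=+\infty$ fails to possess finite radial limits almost everywhere (Zygmund, \emph{Trigonometric Series}, Ch.~V). Hence, as soon as the coefficients satisfy $\sum_k\abs{a_k}^2=+\infty$, the function $f$ cannot belong to $\mathcal N$. This is the conceptually easy half.

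The technical core is to arrange simultaneously that $f\in A^p_\alpha$. For $p=2$ this is immediate: the monomials are orthogonal in $A^2_\alpha$ with $\norm{z^n}_{A^2_\alpha}^2\asymp n^{-(\alpha+1)}$, so $f\in A^2_\alpha$ iff $\sum_k\abs{a_k}^2\,2^{-k(\alpha+1)}<+\infty$, and the choice $\abs{a_k}^2=2^{k(\alpha+1)}k^{-2}$ makes this converge while $\sum_k\abs{a_k}^2=+\infty$. For general $0<p<+\infty$ I would pass through the Paley--Zygmund inequality on each circle $\abs{z}=r$: since $f(re^{i\theta})=\sum_k(a_k r^{2^k})e^{i2^k\theta}$ is again lacunary with the same gaps, one has, with constants depending only on $p$,
\begin{equation*}
\int_0^{2\pi}\abs{f(re^{i\theta})}^p\,d\theta\asymp\Bigl(\sum_k\abs{a_k}^2 r^{2^{k+1}}\Bigr)^{p/2},\qquad 0<r<1 .
\end{equation*}
Integrating against $(1-r)^\alpha$ and using $\sum_k\abs{a_k}^2 r^{2^{k+1}}\asymp\sum_{2^k\le 1/(1-r)}\abs{a_k}^2$ as $r\to1^-$, the finiteness of $\norm{f}_{A^p_\alpha}^p$ reduces to the convergence of a single numerical series in the partial sums $B_K=\sum_{k\le K}\abs{a_k}^2$. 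One then selects $\{\abs{a_k}^2\}$ so that $B_K\asymp 2^{2K(\alpha+1)/p}K^{-4/p}$, which keeps the weighted integral finite for every $p$ (the summand then behaves like $K^{-2}$) while forcing $\sum_k\abs{a_k}^2=\lim_K B_K=+\infty$. Here the hypothesis $\alpha>-1$ is used decisively: it is exactly the positivity of $\alpha+1$ that allows $B_K$ to grow exponentially to infinity while the weighted area integral stays finite.

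I expect the main obstacle to be the general-$p$ membership, specifically obtaining the Paley--Zygmund comparison with constants uniform in $r$ and then carrying out the near-boundary asymptotics of the lacunary sum; the non-membership in $\mathcal N$ and the case $p=2$ are routine. An alternative, fully elementary route for the lower bound would replace the radial-limit argument by Jensen's formula: a nonzero function in $\mathcal N$ has zeros obeying the Blaschke condition $\sum_n(1-\abs{a_n})<+\infty$, so it would suffice to produce an $f\in A^p_\alpha$ whose zero set violates this. However, verifying Bergman membership of an explicit product with such a zero set is more laborious than the coefficient bookkeeping above, so I would retain the lacunary construction as the primary approach.
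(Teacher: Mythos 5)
Your proposal is correct, but note first that the paper does not actually prove this statement: it quotes it as classical, with pointers to \cite[p. 80]{Duren-Apa} for a discussion and \cite[Theorem 5.10]{Duren-Hp} for a wider setting. What you have written out is, in essence, the standard gap-series argument underlying those references, so your route is a self-contained version of the cited proof rather than a genuinely different one. All the ingredients you invoke are sound: Zygmund's theorem that a Hadamard lacunary series with $\sum_k\abs{a_k}^2=+\infty$ has finite radial limits only on a null set, the existence of nontangential limits a.e. for functions in $\mathcal{N}$, the inclusion $H^q\subset\mathcal{N}$, and the two-sided $L^p$--$L^2$ comparison for lacunary series with constants depending only on $p$ and the gap ratio (hence uniform in $r$, since $f(re^{i\theta})$ has the same gaps for every $r$). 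One simplification is worth recording: the coefficient bookkeeping for general $p$ is unnecessary, because the single function $f(z)=\sum_k z^{2^k}$ already works for every $p>0$ and $\alpha>-1$ simultaneously. Indeed $\sum_k 1=+\infty$ rules out membership in $\mathcal{N}$, while the comparison you state gives $\int_0^{2\pi}\abs{f(re^{i\theta})}^p\,d\theta \asymp \bigl(\sum_k r^{2^{k+1}}\bigr)^{p/2} \asymp \bigl(\log\frac{1}{1-r}\bigr)^{p/2}$, and $\int_0^1(1-r)^\alpha\bigl(\log\frac{1}{1-r}\bigr)^{p/2}\,dr<+\infty$ precisely because $\alpha>-1$; so $f\in A^p_\alpha$ with no need for the growing partial sums $B_K$. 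Your exponential choice of $B_K$ is nevertheless fine --- the dyadic tail estimate you sketch does go through, since for $1-r\asymp 2^{-K}$ one has $r^{2^{k+1}}\le e^{-2^{k-K}}$, whose doubly exponential decay beats the geometric growth of $\abs{a_k}^2$ --- it is simply more work than the problem requires.
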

For a definition of the Nevanlinna class, its properties, and the inclusion of Hardy spaces in it, we refer to \cite[Section 2.1]{Duren-Hp}.

The inclusions between mixed-norm spaces have been completely characterized in \cite{Arevalo}. In particular, this yields a similar characterization for the case of Bergman spaces. Indeed, the following result is a consequence of \cite[Theorems 1 and 2]{Arevalo}:
\begin{theorem}
	\label{thm:Arevalo}
	Consider $p,q > 0$ and $\alpha,\beta > -1$.\\
{\rm (a)} Suppose that $p = q$. Then, $A^p_{\alpha} \subset A^q_{\beta}$ if and only if $\alpha \leq \beta$.
\\
{\rm (b)} Suppose that $p > q$. Then, $A^p_{\alpha} \subset A^q_{\beta}$ if and only if
		$$\dfrac{\alpha+1}{p} < \dfrac{\beta+1}{q}.$$
\\
{\rm (c)} Suppose that $p < q$. Then, $A^p_{\alpha} \subset A^q_{\beta}$ if and only if
		$$\dfrac{\alpha+2}{p} \leq \dfrac{\beta+2}{q}.$$
\end{theorem}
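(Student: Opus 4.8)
The plan is to collapse all six domain quantities onto the universal covering (resp.\ conformal) maps $f_D,f_\Omega$ via a Green-function integral, and to isolate the single geometric comparison where Definition~\ref{def:classD}(c) really enters.

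\textbf{Reductions.} If $\Omega=\C$ (e.g.\ $D=\C\setminus\{z_1,\dots,z_n\}$) then $\C\setminus D$ is polar, $\h(D)=\h(\Omega)=0$, and I would verify the remaining quantities vanish directly; so assume $\Omega\subsetneq\C$. A simply connected proper subdomain of $\C$ has a continuum complement, hence $\Omega$ is Greenian, and since $\C\setminus D\supset\C\setminus\Omega$ so is $D$; normalise $0\in D$. For every $f\in\mathrm{Hol}(\D,D')$ with $D'\in\{D,\Omega\}$ I use two facts: the covering map $f_{D'}$ is itself admissible, so $\b_\alpha(D')\le\b_\alpha(f_{D'})$ and $\b(D')\le\b(f_{D'})$; and Theorem~\ref{thm:HqsubsetApa} gives the free reverse bounds $\b_\alpha(f)\ge(\alpha+2)\h(f)$ and $\b(f)\ge\h(f)$, hence $\b_\alpha(D')\ge(\alpha+2)\h(D')$ and $\b(D')\ge\h(D')$. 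Thus everything reduces to the map-level identities
\[\b_\alpha(f_{D'})=(\alpha+2)\,\h(f_{D'})\ \ (D'\in\{D,\Omega\}),\qquad \h(f_D)=\h(f_\Omega),\]
since the first yields $\b(f_{D'})=\sup_\alpha\b_\alpha(f_{D'})/(\alpha+2)=\h(f_{D'})$, so $\b(D')\le\h(D')\le\b(D')$ and $\b_\alpha(D')=(\alpha+2)\h(D')$, while the second together with $\h(D')=\h(f_{D'})$ (Theorem~\ref{thm:main-formula-Hardy}) closes the chain.

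\textbf{Green-function representation.} Writing $\{z_j(w)\}$ for the $f_{D'}$-preimages of $w$, the covering-map change of variables turns the Littlewood--Paley integrals of Theorems~\ref{thm:Hp},~\ref{thm:Apa} (with radial weight $|w|^{p-2}$) into
\[\int_{\D}|f_{D'}|^{p-2}|f_{D'}'|^2\Big(\log\tfrac1{|z|}\Big)^{\gamma}dA=\int_0^{\infty}r^{p-1}\Psi_{D',\gamma}(r)\,dr,\quad \Psi_{D',\gamma}(r)=\int_0^{2\pi}\sum_{j}\Big(\log\tfrac1{|z_j(re^{i\theta})|}\Big)^{\gamma}d\theta.\]
For $\gamma=1$ the inner sum is $g_{D'}(\cdot,0)$ by Theorem~\ref{thm:green}, so $\Psi_{D',1}=\psi_{D'}$; for $\gamma=\alpha+2$ this characterises $A^p_\alpha$. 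Each integrand, extended by $0$ off $D'$, is subharmonic on $\C\setminus\{0\}$ (as in the case $\gamma=1$, an increasing limit of sums of $\gamma$-th powers, $\gamma\ge1$, of nonnegative harmonic functions, vanishing at regular boundary points), so $\Psi_{D',\gamma}(r)$ is convex in $\log r$; exactly as in the proof of Theorem~\ref{thm:main-formula-Hardy} this regularity forces the abscissa of convergence to equal the $\liminf$, yielding the Bergman analogue $\b_\alpha(f_{D'})=\liminf_{r\to\infty}\big(-\log\Psi_{D',\alpha+2}(r)/\log r\big)$.

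\textbf{The factor $\alpha+2$.} Keeping only the innermost preimage and using $\log\tfrac1{|z_1|}=g_\D(0,z_1)\ge e^{-2\rho_\D(0,z_1)}$ with~\eqref{hd} gives $\sum_j(\log\tfrac1{|z_j|})^{\gamma}\ge e^{-2\gamma\rho_{D'}(0,w)}$. If in addition $g_{D'}(w,0)\le C\,e^{-2\rho_{D'}(0,w)}$ for large $|w|$, then $\sum_j(\log\tfrac1{|z_j|})^{\gamma}\ge C^{-\gamma}g_{D'}(w,0)^{\gamma}$, and Jensen's inequality on the circle (the relevant arc has measure $\le2\pi$, $\gamma\ge1$) yields $\Psi_{D',\gamma}(r)\ge c\,\psi_{D'}(r)^{\gamma}$. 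Hence $\b_\alpha(f_{D'})\le(\alpha+2)\liminf(-\log\psi_{D'}/\log r)=(\alpha+2)\h(f_{D'})$, which with the free reverse inequality gives $\b_\alpha(f_{D'})=(\alpha+2)\h(f_{D'})$. For the conformal map $f_\Omega$ the required bound $g_\Omega\le Ce^{-2\rho_\Omega}$ is precisely Lemma~\ref{PL1}(a),(b); for $f_D$ it is the geometric lemma below.

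\textbf{Main obstacle and assembly.} The one genuinely new input is the bound $\rho_D(0,w)\le\rho_\Omega(0,w)+C_0$ as $w\to\infty$ in $D$, equivalently $g_D(\cdot,0)\asymp g_\Omega(\cdot,0)$ far out, i.e.\ the innermost $f_D$-preimage dominates the whole sum defining $g_D$. Granting it, monotonicity $g_D\le g_\Omega$ and Lemma~\ref{PL1}(b) give $g_D\le 4e^{-2\rho_\Omega}\le 4e^{2C_0}e^{-2\rho_D}$, the upper bound used above for $D$; and then Lemma~\ref{PL1}(c) gives $g_\Omega\le 4e^{-2\rho_\Omega}\le 4e^{2C_0}e^{-2\rho_D}\le 4e^{2C_0}g_D$, so $\psi_D\le\psi_\Omega\le 4e^{2C_0}\psi_D$ and hence $\h(f_D)=\h(f_\Omega)$. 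Establishing the displayed hyperbolic estimate is the hard part and is exactly where Definition~\ref{def:classD}(c) is used: since the bounded holes $F$ sit in a fixed bounded set through which, for large $r$, $\Omega$ has a single cross-sectional channel, a path from a far point $w$ toward $0$ can skirt $F$ at only bounded extra hyperbolic cost. I would make this quantitative by comparing the harmonic measures and Green functions of $D$ and $\Omega$ on the bounded region containing $F$, via the Strong Markov Property~\eqref{markov property},~\eqref{smp} and Solynin's polarization (Theorem~\ref{polarization}); this is the content I expect to require the lemmas of Section~\ref{sec:class-D-lemmas}. Once in hand, the map-level identities assemble into $\h(D)=\b(D)=\h(\Omega)=\b(\Omega)=\b_\alpha(D)/(\alpha+2)=\b_\alpha(\Omega)/(\alpha+2)$ and, for hyperbolic $D$ with $0\in D$, into $\h(f_D)=\b(f_D)=\b_\alpha(f_D)/(\alpha+2)$.
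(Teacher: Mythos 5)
Your proposal does not prove the statement it was supposed to prove. The statement is Theorem~\ref{thm:Arevalo}, a purely function-space result: a complete characterization of when one weighted Bergman space $A^p_{\alpha}$ on the unit disk is contained in another, $A^q_{\beta}$. It involves no planar domains, covering maps, Green functions, harmonic measure, or Hardy/Bergman \emph{numbers}. What you have written is instead an outline of a proof of Theorem~\ref{thm:class-D} (the equalities $\h(D)=\b(D)=\h(\Omega)=\b(\Omega)=\b_{\alpha}(D)/(\alpha+2)=\b_{\alpha}(\Omega)/(\alpha+2)$ for $D\in\mathcal{D}$): every step of your argument --- the reduction to the covering maps $f_D$, $f_{\Omega}$, the weighted integrals $\Psi_{D',\gamma}$, the comparison $g_D(0,w)\leq C\,e^{-2\rho_D(0,w)}$, the appeal to Definition~\ref{def:classD}(c) --- is aimed at that theorem and establishes nothing about the necessity or sufficiency of the index conditions $\alpha\leq\beta$, $(\alpha+1)/p<(\beta+1)/q$, or $(\alpha+2)/p\leq(\beta+2)/q$. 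As a proof of Theorem~\ref{thm:Arevalo} it is therefore vacuous, whatever its merits as a sketch of Theorem~\ref{thm:class-D} (where, for what it is worth, it does broadly parallel the strategy of Sections~\ref{sec:class-D-lemmas} and~\ref{sec:proof-D}, though the paper never needs your full Bergman analogue of Theorem~\ref{thm:main-formula-Hardy}; it only proves the one-sided bound via Jensen's inequality and \eqref{eq:condition-Green} in Lemmas~\ref{lemma:Apa-integral} and~\ref{lemma:equality-HB}).

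You should also know that the paper itself gives no proof of Theorem~\ref{thm:Arevalo}: it is quoted as a direct consequence of Ar\'evalo's characterization of inclusions between mixed norm spaces, \cite[Theorems 1 and 2]{Arevalo}. A self-contained argument would look entirely different from your sketch. Sufficiency in (a) is immediate from $(1-\abs{z})^{\beta}\leq(1-\abs{z})^{\alpha}$ when $\alpha\leq\beta$; sufficiency in (b) is H\"older's inequality with exponent $p/q>1$, the resulting weight integral $\int_{\D}(1-\abs{z})^{(\beta-\alpha q/p)\,p/(p-q)}dA(z)$ being finite exactly when $(\alpha+1)/p<(\beta+1)/q$; sufficiency in (c) uses the pointwise growth estimate $\abs{f(z)}^p\lesssim \normesp{f}_{A^p_{\alpha}}^p(1-\abs{z})^{-(\alpha+2)}$ (a Carleson-type embedding). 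Necessity in (a) and (c), and the failure of the non-strict inequality regime in (b), are tested on the functions $f_c(z)=(1-z)^{-c}$, which lie in $A^p_{\alpha}$ precisely when $c<(\alpha+2)/p$; the strictness in (b) additionally requires a lacunary-type example, since at the endpoint $(\alpha+1)/p=(\beta+1)/q$ with $p>q$ one has $(\alpha+2)/p<(\beta+2)/q$ and power functions cannot detect the failure. None of these ingredients appear anywhere in your proposal.
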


\subsection{Initial remarks on Hardy and Bergman numbers.}
The inclusions in the latter subsection impose several relations among the Hardy number $\h$ and the Bergman numbers $\b$ and $\b_{\alpha}$. Some of them have been previously noticed in \cite[Lemmas 2.1 and 2.2]{K-Domain}:
\begin{lemma}
	\label{lemma:ineq-b}
		{\rm (a)}  Let $f$ be a holomorphic map on $\D$. Then $\h(f) \leq \b(f)$.\\
		{\rm (b)} Let $D \subset \C$ be a domain. Then $\h(D) \leq \b(D)$.\\
		{\rm (c)} If $\C \setminus D$ is bounded, then there exists a holomorphic function $f \colon \D \to D$ such that $f \not\in A^p_{\alpha}$ for all $p > 0$, $\alpha > -1$. In particular, $\b(D) = 0$.

\end{lemma}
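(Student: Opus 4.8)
The plan is to handle the three parts in order, obtaining (b) as a formal consequence of (a), and settling (c) by an explicit construction.

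For part (a), I would assume $\h(f) > 0$, the remaining case being trivial since $\b(f) \geq 0$ by definition. Fix any $p$ with $0 < p < \h(f)$; then $f \in H^p$, because $H^q \subset H^p$ for $q \geq p$ and there is a $q$ with $p < q$ and $f \in H^q$. Theorem \ref{thm:HqsubsetApa} gives $H^p \subset A^p_{\alpha}$ for every $\alpha > -1$, so $f \in A^p_{\alpha}$ and hence $\frac{p}{\alpha+2} \leq \b(f)$ for every $\alpha > -1$. Letting $\alpha \to -1^+$ yields $p \leq \b(f)$, and since $p < \h(f)$ was arbitrary, $\h(f) \leq \b(f)$. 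Part (b) is then immediate: by (a), $\h(f) \leq \b(f)$ for every $f \in \mathrm{Hol}(\D,D)$, so the infimum of the left sides is at most the infimum of the right sides, that is, $\h(D) \leq \b(D)$.

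For part (c), the idea is to exhibit a single holomorphic $f \colon \D \to D$ lying in no $A^p_{\alpha}$; for such an $f$ the defining set of $\b(f)$ is empty, so $\b(f) = 0$, and since $\b(D)$ is an infimum of nonnegative quantities, this forces $\b(D) = 0$. As $\C \setminus D$ is bounded, fix $R$ with $\C \setminus D \subset \overline{\Delta_R}$, so that $\{z : |z| > R\} \subset D$. I would set
$$f(z) = R\,\exp\left(\frac{1+z}{1-z}\right).$$
The Cayley-type map $\frac{1+z}{1-z}$ sends $\D$ onto the right half-plane, so $\mathrm{Re}\,\frac{1+z}{1-z} > 0$ and thus $|f(z)| > R$; hence $f$ maps $\D$ into $\{|z| > R\} \subset D$. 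It then remains to show $\int_{\D}|f(z)|^p(1-|z|)^{\alpha}dA(z) = +\infty$ for all $p > 0$ and $\alpha > -1$.

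This divergence estimate is the only real obstacle, and I would establish it by the change of variables $w = \frac{1+z}{1-z}$, which maps $\D$ onto $\{\mathrm{Re}\,w > 0\}$ and satisfies $|f| = R\,e^{\mathrm{Re}\,w}$, $1 - |z|^2 = \frac{4\,\mathrm{Re}\,w}{|w+1|^2}$, and $dA(z) = \frac{4}{|w+1|^4}dA(w)$. The integrand becomes $e^{p\,\mathrm{Re}\,w}$ times an algebraic factor; writing $w = s + it$ with $s = \mathrm{Re}\,w$, the inner integral over $t$ converges because the resulting exponent $2\alpha + 4 > 2$ (which holds since $\alpha > -1$) forces decay at $|t| \to \infty$, and it produces a factor of the form $s^{\gamma}$ for some real $\gamma$. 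One is left with an integral $\int^{\infty}e^{ps}\,s^{\gamma}\,ds$, which diverges because the exponential dominates any power of $s$. This confirms $f \notin A^p_{\alpha}$ for every admissible $p,\alpha$, giving $\b(D) = 0$ and completing the proof.
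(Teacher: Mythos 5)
Your proof is correct. Note that the paper does not actually prove this lemma; it only records it with a citation to \cite[Lemmas 2.1 and 2.2]{K-Domain}, so there is no in-paper argument to compare against. Your route is the standard one and fills that gap: parts (a) and (b) follow exactly as you say from the inclusion $H^q\subset A^p_\alpha$ for $q\ge p/(\alpha+2)$ (Theorem \ref{thm:HqsubsetApa}), letting $\alpha\to -1^+$ to recover $p\le \b(f)$, and then passing to infima over $\mathrm{Hol}(\D,D)$; part (c) via $f(z)=R\exp\bigl(\tfrac{1+z}{1-z}\bigr)$ is the classical example of a function omitted from every weighted Bergman space, and your change of variables to the right half-plane is sound (the inner $t$-integral converges since $2(\alpha+2)>2$, leaving $\int^{\infty}e^{ps}s^{\gamma}\,ds=+\infty$). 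Two harmless cosmetic points: the paper's weight is $(1-|z|)^{\alpha}$ rather than $(1-|z|^2)^{\alpha}$, but the two are comparable on $\D$ so your computation transfers; and for the divergence you do not even need the exact value of the inner integral, only a positive lower bound on, say, $\{|t|<1,\ s>1\}$, which makes the estimate slightly more robust.
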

\begin{remark}
	\label{remark:hfzero}
	It follows from Theorem \ref{thm:ApanotsubsetN} that the reverse inequality in Lemma \ref{lemma:ineq-b}(a) fails: if $f \in A^p_{\alpha}$ is such that $f \not\in H^q$ for all $q > 0$, then $\b(f) > 0$ and $\b_{\alpha}(f) > 0$, but $\h(f) = 0$. However, as already noted, $\h(f) = \b(f)$ for a conformal map $f$; see \cite[Eq. (1.5)]{KK}.
\end{remark}

In a similar fashion, we can also give the following inequality:
\begin{lemma}
	\label{lemma:ineq-ba}
	Let $D \subset \C$ be a domain. For all $\alpha > -1$, 
	$$
	\h(D) \leq \frac{\b_{\alpha}(D)}{\alpha+2}.
	$$
	\begin{proof}
		If $\h(D) = 0$, the inequality holds. Therefore, let us assume that $\h(D) > 0$. In that case, choose $0 < q < \h(D)$ and a function $f \colon \D \to D$. By the definition of the Hardy number, $f \in H^q$. But then it follows from Theorem \ref{thm:HqsubsetApa} that $f \in A^p_{\alpha}$ whenever $q \geq p/(\alpha+2)$. This implies that $q \leq \b_{\alpha}(D)/(\alpha+2)$. The result follows by taking the limit as $q \to \h(D)$.
	\end{proof}
\end{lemma}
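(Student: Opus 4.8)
The plan is to reduce this inequality to the single inclusion $H^q \subset A^p_{\alpha}$ recorded in Theorem \ref{thm:HqsubsetApa}, applied at the extremal weight exponent. The right-hand quantity $\b_{\alpha}(D)$ measures how large $p$ may be taken so that every $f \in \mathrm{Hol}(\D,D)$ still lies in $A^p_{\alpha}$, while $\h(D)$ is the analogous threshold on the unweighted Hardy scale. So the entire content is that membership in a Hardy space forces membership in a suitably chosen weighted Bergman space, and I want to convert that one inclusion into a comparison of the two numbers through their definitions \eqref{eq:def-balpha}.

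Concretely, I would first dispose of the trivial case $\h(D) = 0$, in which the bound holds because $\b_{\alpha}(D) \geq 0$. Assuming $\h(D) > 0$, I would fix any $q$ with $0 < q < \h(D)$ and an arbitrary $f \in \mathrm{Hol}(\D,D)$; by the definition of $\h(D)$ we have $f \in H^q$. The key step is to apply Theorem \ref{thm:HqsubsetApa} with $p = q(\alpha+2)$: then $p/(\alpha+2) = q$, so the hypothesis $q \geq p/(\alpha+2)$ holds with equality, giving $H^q \subset A^{q(\alpha+2)}_{\alpha}$ and hence $f \in A^{q(\alpha+2)}_{\alpha}$. Since $f$ was arbitrary, $\mathrm{Hol}(\D,D) \subset A^{q(\alpha+2)}_{\alpha}$, which by \eqref{eq:def-balpha} yields $\b_{\alpha}(D) \geq q(\alpha+2)$, i.e. $q \leq \b_{\alpha}(D)/(\alpha+2)$.

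Finally I would let $q \to \h(D)^{-}$ (taking $q$ along any sequence increasing to $\h(D)$, or to $+\infty$ if $\h(D) = +\infty$) to conclude $\h(D) \leq \b_{\alpha}(D)/(\alpha+2)$. I do not expect a genuine obstacle here: the only points needing a little care are the separate handling of $\h(D) = 0$, the harmless use of the endpoint $p = q(\alpha+2)$ (legitimate because the hypothesis of Theorem \ref{thm:HqsubsetApa} is a non-strict inequality), and the passage to the supremum over $q$. I would also note that running the same argument for a single fixed $f$ rather than for all of $\mathrm{Hol}(\D,D)$ proves the pointwise analogue $\h(f) \leq \b_{\alpha}(f)/(\alpha+2)$.
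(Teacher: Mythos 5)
Your proposal is correct and follows essentially the same route as the paper: dispose of the case $\h(D)=0$, take $0<q<\h(D)$, use $f\in H^q$ together with Theorem \ref{thm:HqsubsetApa} at the exponent $p=q(\alpha+2)$ to get $\b_{\alpha}(D)\geq q(\alpha+2)$, and let $q\to\h(D)$. Your version merely makes the choice of $p$ and the passage through \eqref{eq:def-balpha} more explicit.
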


The Hardy number of a hyperbolic domain $D \subset \C$ coincides with the Hardy number of its universal covering map $f_D \colon \D \to D$. A proof can be found in \cite[Lemma 2.1]{KimSugawa}. The same equality holds in the Bergman case:
\begin{lemma}
	\label{lemma:number-univcov}
	Let $D \subset \C$ be a hyperbolic domain with universal covering map $f_D \colon \D \to D$. Then, for all $\alpha > -1$, 
	$$
	\h(D) = \h(f_D), \;\;\;\b(D) = \b(f_D),\;\;\; \text{and}\;\;\; \b_{\alpha}(D) = \b_{\alpha}(f_D).
	$$
	\begin{proof}
		 Since both cases ($\b$ and $\b_{\alpha}$) are similar, let us reason with the Bergman number $\b$. First, notice that $\b(D) \leq \b(f_D)$ by definition. Therefore, it suffices to prove that $\b(f_D) \leq \b(g)$ for all holomorphic maps $g \colon \D \to D$. This is a consequence of Littlewood Subordination Theorem \cite[Theorem 1.7]{Duren-Hp} and the  expression of the weighted Bergman norm as a radial integral (see, for example,  \cite[p. 77]{Duren-Apa}).
	\end{proof}
\end{lemma}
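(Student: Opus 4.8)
The plan is to prove both inequalities separately, the reverse one being the substance. The inequality $\b(D)\le\b(f_D)$ is immediate from the definition \eqref{b-def}, since $f_D$ itself is one of the competitors in the infimum defining $\b(D)$; the same reasoning gives $\b_\alpha(D)\le\b_\alpha(f_D)$ from \eqref{eq:def-balpha}, and $\h(D)\le\h(f_D)$. So it suffices to establish $\b(f_D)\le\b(g)$, $\b_\alpha(f_D)\le\b_\alpha(g)$, and $\h(f_D)\le\h(g)$ for every $g\in\mathrm{Hol}(\D,D)$; taking the infimum over such $g$ then yields $\b(f_D)\le\b(D)$ and the analogues, closing the loop. (The Hardy equality is already recorded in \cite[Lemma 2.1]{KimSugawa}, but it falls out of the same argument, so I would treat all three uniformly.)

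First I would produce a subordination relation between an arbitrary $g\in\mathrm{Hol}(\D,D)$ and $f_D$. Since $f_D$ is a universal covering map and $\D$ is simply connected, the lifting property furnishes a holomorphic $\phi\colon\D\to\D$ with $g=f_D\circ\phi$. To invoke the Littlewood Subordination Theorem \cite[Theorem 1.7]{Duren-Hp} I need the normalization $\phi(0)=0$, which the arbitrary lift need not satisfy. I would arrange it by absorbing the base point into a disk automorphism: setting $a=\phi(0)$ and choosing $\tau\in\mathrm{Aut}(\D)$ with $\tau(0)=a$, the map $\hat f=f_D\circ\tau$ is again a universal covering map of $D$, it satisfies $\hat f(0)=f_D(a)=g(0)$, and $g=\hat f\circ(\tau^{-1}\circ\phi)$ with $(\tau^{-1}\circ\phi)(0)=0$. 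Thus $g$ is subordinate to $\hat f$ in the strict sense required.

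With this normalization, Littlewood's theorem gives the pointwise domination of integral means $M_p(r,g)\le M_p(r,\hat f)$ for all $0<p<+\infty$ and $0<r<1$, where $M_p(r,F)=\left(\int_0^{2\pi}|F(re^{i\theta})|^p\,d\theta\right)^{1/p}$. Writing the Hardy norm as $\sup_{0<r<1}M_p(r,\cdot)$, and using the expression of the weighted Bergman norm as a radial integral (so that $\|\cdot\|_{A^p_\alpha}^p$ is, up to a constant, the integral of $M_p(r,\cdot)^p$ against the nonnegative radial measure $(1-r)^\alpha r\,dr$; see \cite[p.~77]{Duren-Apa}), this domination transfers to both scales: $\hat f\in H^p\Rightarrow g\in H^p$ and $\hat f\in A^p_\alpha\Rightarrow g\in A^p_\alpha$. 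Consequently $\{p:\hat f\in A^p_\alpha\}\subseteq\{p:g\in A^p_\alpha\}$, and likewise for the Hardy and the full Bergman scales, whence $\b(\hat f)\le\b(g)$, $\b_\alpha(\hat f)\le\b_\alpha(g)$, and $\h(\hat f)\le\h(g)$.

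Finally I would replace $\hat f$ by $f_D$. Since membership in $H^p$ and in $A^p_\alpha$ is invariant under right-composition with a disk automorphism (for $A^p_\alpha$ this follows from the change-of-variables formula, using that the automorphism's Jacobian and the weight $1-|z|^2$ transform compatibly and that $|(\tau^{-1})'|$ is bounded above and below on $\D$), we have $\b(\hat f)=\b(f_D)$, $\b_\alpha(\hat f)=\b_\alpha(f_D)$, and $\h(\hat f)=\h(f_D)$; equivalently, the three numbers of a covering map do not depend on the choice of covering. Combining, $\b(f_D)\le\b(g)$ for all $g$, and symmetrically for $\b_\alpha$ and $\h$, which gives the desired inequalities and hence the equalities. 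The main obstacle is precisely this base-point bookkeeping: Littlewood subordination is stated for inner maps fixing the origin, so the interplay between the automorphism-precomposition and the Möbius invariance of the spaces is the step requiring care, while everything else is a monotone transfer of a single pointwise inequality on integral means through a positive radial integration.
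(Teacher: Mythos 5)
Your proof is correct and follows exactly the paper's route: the trivial inequality $\b(D)\le\b(f_D)$ from the definition, then lifting an arbitrary $g\in\mathrm{Hol}(\D,D)$ through the covering map and applying the Littlewood Subordination Theorem together with the radial-integral expression of the weighted Bergman norm. The only difference is that you spell out the base-point normalization (replacing $f_D$ by $f_D\circ\tau$ for a disk automorphism $\tau$ and invoking the M\"obius invariance of membership in $H^p$ and $A^p_\alpha$), a bookkeeping detail the paper leaves implicit.
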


There are also some relations between both Bergman numbers:
\begin{lemma}
	\label{lemma:ineq-bab}
{\rm (a)}	Let $f$ be a holomorphic map on $\D$. Then, for all $\alpha > -1$,
	$$
	\frac{\b_{\alpha}(f)}{\alpha+2} \leq \b(f).
	$$
{\rm (b)}	Let $D$ be a domain in $\C$. 
Then, for all $\alpha > -1$,
$$
\frac{\b_{\alpha}(D)}{\alpha+2} \leq \b(D).
$$
	\begin{proof}
	(a)	Arguing by contradiction, suppose that $\b(f) < \b_{\alpha}(f)/(\alpha+2)$. In that case, choose $p > 0$ such that $\b(f) < p/(\alpha+2) < \b_{\alpha}(f)/(\alpha+2)$. Since $\b(f) < p/(\alpha+2)$, it follows that $f \not\in A^p_{\alpha}$. But this contradicts the fact that $p < \b_{\alpha}(f)$.\\
	(b) This follows from (a) and the definitions given in (\ref{b-def}) and (\ref{eq:def-balpha}).
	\end{proof}
\end{lemma}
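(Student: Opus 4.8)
The plan is to derive both inequalities directly from the definitions, since the content is purely bookkeeping about suprema and infima. For part (a), I would fix the value of $\alpha > -1$ from the statement and recall that
$$\b(f) = \sup\left(\{0\} \cup \left\lbrace \frac{q}{\beta+2} : q > 0,\, \beta > -1,\, f \in A^q_{\beta}\right\rbrace\right).$$
The key observation is that every exponent $p > 0$ for which $f \in A^p_{\alpha}$ contributes the ratio $p/(\alpha+2)$ to the set over which this supremum is taken (taking $\beta = \alpha$). Hence $p/(\alpha+2) \le \b(f)$ for each such $p$, and since dividing by the positive constant $\alpha+2$ commutes with the supremum, taking the supremum over all admissible $p$ yields $\b_{\alpha}(f)/(\alpha+2) \le \b(f)$. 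If no such $p$ exists, then $\b_{\alpha}(f) = 0$ and the inequality is trivial.

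For part (b), I would simply pass to the infimum over $\mathrm{Hol}(\D,D)$. Rewriting part (a) as $\b_{\alpha}(f) \le (\alpha+2)\,\b(f)$, which holds for every $f \in \mathrm{Hol}(\D,D)$, and using the monotonicity of the infimum together with the definitions in \eqref{b-def} and \eqref{eq:def-balpha}, I obtain
$$\b_{\alpha}(D) = \inf_{f} \b_{\alpha}(f) \le (\alpha+2)\inf_{f}\b(f) = (\alpha+2)\,\b(D),$$
and dividing by $\alpha+2 > 0$ produces the claimed bound.

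There is no genuine obstacle here; the only points requiring minor care are the degenerate cases in which some supremum or infimum equals $0$ or $+\infty$, but these are handled automatically by the convention that the defining sets always contain $\{0\}$ and by the monotonicity of $\inf$ and $\sup$. An equivalent and perhaps cleaner route for part (a) is the contrapositive: if one had $\b(f) < \b_{\alpha}(f)/(\alpha+2)$, then choosing $p$ with $\b(f) < p/(\alpha+2) < \b_{\alpha}(f)/(\alpha+2)$ forces $f \notin A^p_{\alpha}$ (since otherwise $p/(\alpha+2) \le \b(f)$), while $p < \b_{\alpha}(f)$ together with the inclusion $A^{p'}_{\alpha} \subset A^p_{\alpha}$ for $p' > p$ (Theorem \ref{thm:Arevalo}(b) with $\beta=\alpha$) forces $f \in A^p_{\alpha}$, a contradiction.
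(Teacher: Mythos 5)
Your proof is correct and follows essentially the same route as the paper's: both parts are pure bookkeeping with the definitions, and your closing contrapositive version of (a) is exactly the paper's argument. Your primary direct-supremum version of (a) is in fact marginally cleaner, since it never needs the inclusion $A^{p'}_{\alpha}\subset A^{p}_{\alpha}$ for $p'>p$ that the contradiction argument (and, implicitly, the paper) relies on.
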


\begin{remark}
	The inequality in the latter result is, in general, strict. For example, consider the spaces $A^2_0$ and $A^5_2$. If follows from Theorem \ref{thm:Arevalo} that $A^5_2 \not\subset A^2_0$. Therefore, choose $f \in A^5_2 \setminus A^2_0$. Since $f \in A^5_2$, it follows that $\b(f) \geq 5/(2+2) = 5/4$. Since $f \not\in A^2_0$, we have that $\b_0(f) \leq 2$. So, $\b_0(f)/(0+2) \leq 1$. That is, $f$ is an example where $\b_0(f)/(0+2) < \b(f)$.
\end{remark}

\subsection{The Bloch space.}
Recall that the Bloch space $\mathcal{B}$ is the collection of all holomorphic maps $f \colon \D \to \C$ with
$$\sup_{z \in \D}(1-\abs{z}^2)\abs{f'(z)} < +\infty.$$
The following result is  classical. Here we use the notation $D(z,r) = \{w \in \C : \abs{w-z} < r\}$ for $z \in \C$ and $r > 0$.
\begin{theorem}
{\rm (see \cite{ACP-Bloch} and \cite[p. 35]{PommerenkeUnivFun})} 
\label{thm:Bloch-domain}
Let $D \subset \C$ be a hyperbolic domain, and consider a universal covering map $f_D \colon \D \to D$. The following are equivalent: \\
{\rm (a)} $\mathrm{Hol}(\D,D) \subset \mathcal{B}$, \\
{\rm (b)} $f_D \in \mathcal{B}$, \\
{\rm (c)} $\sup_{z \in D}R(z) < +\infty$, where $R(z) = \sup\{r > 0 : D(z,r) \subset D\}$.
\end{theorem}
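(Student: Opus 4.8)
The plan is to pass from the Bloch seminorm to the hyperbolic density of $D$ and then read off the geometry. Write $\lambda_D$ for the density of the hyperbolic metric of $D$, normalized so that $\lambda_\D(z) = 1/(1-\abs{z}^2)$ to match the distance $\rho_\D$ used above; then $f \in \mathcal{B}$ means exactly $\sup_{z \in \D}\abs{f'(z)}/\lambda_\D(z) < +\infty$. Since a universal covering map is a local hyperbolic isometry, $\lambda_D(f_D(z))\abs{f_D'(z)} = \lambda_\D(z)$ for all $z \in \D$, so that
$$(1-\abs{z}^2)\abs{f_D'(z)} = \frac{\abs{f_D'(z)}}{\lambda_\D(z)} = \frac{1}{\lambda_D(f_D(z))}.$$
Because $f_D$ is onto, taking suprema gives $\sup_{z\in\D}(1-\abs{z}^2)\abs{f_D'(z)} = \sup_{w \in D}1/\lambda_D(w)$. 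Hence condition (b) is equivalent to $\inf_{w\in D}\lambda_D(w) > 0$, and the whole proof reduces to comparing this infimum with the geometric quantity $R(w)$.

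Next I would dispose of the two easy implications. For (a) $\Rightarrow$ (b) there is nothing to do, since $f_D \in \mathrm{Hol}(\D,D)$. For (b) $\Rightarrow$ (a), let $g \in \mathrm{Hol}(\D,D)$ be arbitrary. By the Schwarz--Pick lemma (holomorphic maps do not increase the hyperbolic metric), $\lambda_D(g(z))\abs{g'(z)} \leq \lambda_\D(z)$, whence
$$(1-\abs{z}^2)\abs{g'(z)} = \frac{\abs{g'(z)}}{\lambda_\D(z)} \leq \frac{1}{\lambda_D(g(z))} \leq \sup_{w\in D}\frac{1}{\lambda_D(w)}.$$
Thus, under (b), every such $g$ lies in $\mathcal{B}$ with a uniform bound, giving (a).

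The remaining equivalence (b) $\Leftrightarrow$ (c) is the heart of the matter, and here I would use two complementary estimates relating $\lambda_D(w)$ to $R(w)$. First, since $D(w,R(w)) \subset D$, domain monotonicity of the hyperbolic metric together with the value $\lambda_{D(w,R(w))}(w) = 1/R(w)$ gives $\lambda_D(w) \leq 1/R(w)$, that is $R(w) \leq 1/\lambda_D(w)$; therefore $\sup_w 1/\lambda_D(w) < +\infty$ forces $\sup_w R(w) < +\infty$, which is (b) $\Rightarrow$ (c). For the converse I would argue by contraposition: if (b) fails then $\inf_w \lambda_D(w) = 0$, so there are $w_n \in D$ with $1/\lambda_D(w_n) \to +\infty$. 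Taking covering maps $f_n \colon \D \to D$ with $f_n(0) = w_n$, the isometry relation gives $\abs{f_n'(0)} = 1/\lambda_D(w_n) \to +\infty$. Applying Bloch's theorem to the normalized maps $z \mapsto (f_n(z) - w_n)/f_n'(0)$, each image $f_n(\D) = D$ contains a disk of radius $\geq B\abs{f_n'(0)}$ for the absolute Bloch constant $B$; hence $\sup_w R(w) = +\infty$ and (c) fails.

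The step I expect to be the main obstacle is this last one, the passage from ``the hyperbolic density is not bounded below'' to ``arbitrarily large disks sit inside $D$''. One must be careful that $f_D$ is only locally univalent, so the two-sided comparison $\lambda_D(w) \geq c/R(w)$ that holds for simply connected domains (via the Koebe theorem) is false in general; the correct tool is Bloch's theorem, which manufactures a genuine schlicht disk of radius proportional to $\abs{f_n'(0)}$ inside the full image $D$, and it is precisely that disk which certifies the failure of (c).
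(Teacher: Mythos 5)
Your proof is correct. The paper does not prove this theorem --- it quotes it as classical, citing Anderson--Clunie--Pommerenke and Pommerenke's book --- and your argument (the covering map as a local isometry for the hyperbolic density, Schwarz--Pick for (b)$\Rightarrow$(a), domain monotonicity of the density for (b)$\Rightarrow$(c), and Bloch's theorem to handle the contrapositive of (c)$\Rightarrow$(b), correctly avoiding the false two-sided comparison $\lambda_D(w)\geq c/R(w)$) is precisely the standard one found in those references.
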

Due to its importance, we say that a domain $D \subset \C$ is a Bloch domain if it satisfies the geometric property given in condition (c). Since $\mathcal{B} \subset A^p_{\alpha}$ for every $0<p<+\infty$ and every $\alpha>-1$ (see e.g. \cite[p. 44]{Duren-Apa}), the following result is obtained.
\begin{corollary}
\label{cor:Bloch-imples-binfinity}
If $D \subset \C$ is a Bloch domain, then $\b(D) = \b_{\alpha}(D) = +\infty$ for all $\alpha > -1$.
\end{corollary}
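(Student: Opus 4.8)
The plan is to deduce the statement directly by chaining together Theorem \ref{thm:Bloch-domain} with the classical inclusion $\mathcal{B} \subset A^p_\alpha$. First I would observe that, by hypothesis, $D$ satisfies condition (c) of Theorem \ref{thm:Bloch-domain}, namely $\sup_{z \in D}R(z) < +\infty$. The equivalence $\text{(c)} \Leftrightarrow \text{(a)}$ in that theorem then gives $\mathrm{Hol}(\D,D) \subset \mathcal{B}$. Invoking the containment $\mathcal{B} \subset A^p_\alpha$, valid for every $0 < p < +\infty$ and every $\alpha > -1$, I conclude that $\mathrm{Hol}(\D,D) \subset A^p_\alpha$ for all such $p$ and $\alpha$.

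It remains to unwind the definitions of the two Bergman numbers. Fix $\alpha > -1$. Since every $f \in \mathrm{Hol}(\D,D)$ belongs to $A^p_\alpha$ for arbitrarily large $p$, the set $\{p > 0 : \mathrm{Hol}(\D,D) \subset A^p_\alpha\}$ is unbounded, so the definition \eqref{eq:def-balpha} yields $\b_\alpha(D) = +\infty$. For $\b(D)$, I would argue at the level of an individual map: for a fixed $f \in \mathrm{Hol}(\D,D)$ and a fixed $\alpha$, the admissible ratios $p/(\alpha+2)$ tend to $+\infty$ as $p \to +\infty$, whence $\b(f) = +\infty$. Taking the infimum over $f \in \mathrm{Hol}(\D,D)$ as in \eqref{b-def} gives $\b(D) = +\infty$ as well.

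There is no genuine obstacle here: the corollary is an immediate consequence of the equivalences recorded in Theorem \ref{thm:Bloch-domain} and the standard fact that the Bloch space sits inside every weighted Bergman space. The only point deserving a little care is the bookkeeping between the two definitions of Bergman number, so that the unboundedness of the admissible exponents is seen to transfer correctly to both $\b$ and $\b_\alpha$; once the Bloch containment is in hand, both conclusions follow by inspection.
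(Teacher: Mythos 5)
Your argument is correct and is exactly the paper's reasoning: the corollary is stated immediately after the observation that $\mathcal{B}\subset A^p_{\alpha}$ for all $0<p<+\infty$ and $\alpha>-1$, combined with the equivalence (a)$\Leftrightarrow$(c) of Theorem \ref{thm:Bloch-domain}. Your extra bookkeeping on how the unboundedness of admissible exponents transfers to both $\b(D)$ and $\b_{\alpha}(D)$ is sound and merely makes explicit what the paper leaves implicit.
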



\section{Proof of Theorem \ref{thm:main-formula-Hardy}}
In this section, $D \subset \C$ denotes a Greenian domain with $0 \in D$, and $f_D \colon \D \to D$ denotes a universal covering map with $f_D(0) = 0$. We will also use the integral form $\psi_D$ of the Green function, given in \eqref{eq:integralmean}.
We first show that $\psi_D$ is a decreasing function (we use this term in the sense of non-strict monotonicity). This turns out to be a key property of $\psi_D$. 
\begin{lemma}
	\label{lemma:decreasing}
	The function $\psi_D$ is decreasing on $(0,+\infty)$.
	\begin{proof}
		Assume first that $D$ is regular for the Dirichlet problem. Then, the extended function $g_D(\cdot,0)$ is continuous and subharmonic on $\C\setminus \{0\}$. It follows (see \cite[Theorem 2.12]{HK}) that $\psi_D$ is a convex function of $\log r$, $0<r<+\infty$. Therefore, either $\psi_D$ is decreasing or there exists $r_0>0$ such that $g$ is decreasing on $(0,r_0)$ and increasing on $(r_0,\infty)$. 
		
		We will show that the latter case cannot occur: The point at $\infty$ is either a non-boundary point of $D$ (i.e., $\infty \not\in \overline{D}$ in the topology of $\C_{\infty}$) or a regular boundary point of $D$. Therefore, (see \cite[Theorem 4.4.9]{Ransford}) $\lim_{z\to\infty}g_D(z,0)=0$. So, by the Dominated Convergence Theorem, 
		$$
		\lim_{r\to\infty}\psi_D(r)=0.
		$$
		 Hence $\psi_D$ cannot be increasing on any interval $(r_0,\infty)$. We proved that the second case cannot occur and thus $\psi_D$ is decreasing.
		
		We now remove the assumption that $D$ is regular. Suppose only that $D$ is a Greenian domain. Let $D_n$, $n \in \N$, be an increasing sequence of regular domains with $0\in D_n$ for every $n \in \N$ and $\cup_{n=1}^\infty D_n =D$ (a standard construction can be found, for example, in \cite[Section XI.14]{Kellogg}). Let $g_{D_n}$ be the Green function of $D_n$ and set $\psi_n(r) :=\int_0^{2\pi} g_{D_n}(re^{i\theta},0)d\theta$. By \cite[Theorem 4.4.6]{Ransford}, $\lim_n g_{D_n}(z,0)=g_D(z,0)$, $z\in D$. The Monotone Convergence Theorem yields $\lim_{n\to\infty}\psi_n(r)=\psi_D(r)$, $r\in (0,\infty)$. By the first part of the proof, each $\psi_n$ is decreasing, and so $\psi_D$ is decreasing.
	\end{proof}
\end{lemma}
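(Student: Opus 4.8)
The plan is to deduce the monotonicity of $\psi_D$ from the classical fact that the circular integral means of a subharmonic function form a convex function of $\log r$, combined with the vanishing of the Green function at infinity. I would organize the argument in two stages: first under a regularity hypothesis, and then in general by an exhaustion argument.

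First I would treat the case where $D$ is regular for the Dirichlet problem. The starting observation is that the extended function $u(z) := g_D(z,0)$, set equal to $0$ on $\C \setminus D$, is continuous and subharmonic on $\C \setminus \{0\}$. Indeed, $u$ is harmonic (hence subharmonic) on $D \setminus \{0\}$; it is identically $0$ (hence harmonic) on the interior of the complement; and at each boundary point $\xi \in \partial D$ regularity forces $u(\xi)=0$ and continuity of $u$, so the sub-mean-value inequality holds there simply because $u \geq 0$. Thus $u$ is subharmonic on $\C\setminus\{0\}$, and the classical convexity theorem \cite[Theorem 2.12]{HK} shows that $\psi_D(r)$ is a convex function of $\log r$ on $(0,+\infty)$.

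The next step is to exclude the possibility that this convex function increases on some terminal interval. A nonnegative convex function of $\log r$ has nondecreasing right-hand slope, so it is either nonincreasing throughout, or else its right slope is positive at some point and stays positive, forcing $\psi_D(r)\to +\infty$ as $r\to+\infty$. I would rule out the latter by showing $\lim_{r\to +\infty}\psi_D(r)=0$: since $\infty$ is either an interior point of $\C_\infty\setminus D$ or a regular boundary point of $D$, one has $g_D(z,0)\to 0$ as $z\to\infty$ by \cite[Theorem 4.4.9]{Ransford}, and since $g_D(\cdot,0)$ is bounded outside a neighbourhood of $0$, the Dominated Convergence Theorem gives the claim. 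Convexity in $\log r$ together with the limit $0$ then forces $\psi_D$ to be nonincreasing.

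Finally, to remove the regularity assumption I would exhaust $D$ from inside by an increasing sequence $D_1\subset D_2\subset\cdots$ of regular domains with $0\in D_n$ and $\bigcup_n D_n=D$ (such exhaustions exist by a standard construction). Domain monotonicity gives $g_{D_n}(\cdot,0)\leq g_{D_{n+1}}(\cdot,0)$, while $g_{D_n}(z,0)\uparrow g_D(z,0)$ by \cite[Theorem 4.4.6]{Ransford}; the Monotone Convergence Theorem then yields $\psi_n(r)\uparrow\psi_D(r)$ pointwise, where $\psi_n$ is the mean associated with $D_n$. Since each $\psi_n$ is decreasing by the regular case, the pointwise limit $\psi_D$ is decreasing as well. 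The main obstacle I anticipate lies in the first stage: verifying that the extension-by-zero is genuinely subharmonic across $\partial D$ (which is precisely where regularity enters) and cleanly eliminating the increasing branch via the behaviour at infinity. Once these two points are secured, the convexity theorem and the exhaustion complete the argument.
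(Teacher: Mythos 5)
Your proposal is correct and follows essentially the same route as the paper's own proof: extension by zero, subharmonicity via regularity, convexity of the means in $\log r$, elimination of the increasing branch through $\lim_{r\to+\infty}\psi_D(r)=0$, and exhaustion by regular domains with monotone convergence. The only difference is that you spell out two points the paper leaves implicit (the sub-mean-value inequality at regular boundary points and the fact that a nonnegative convex function of $\log r$ with eventually positive slope must tend to $+\infty$), which is a welcome addition but not a different argument.
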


We now derive a characterization for the Hardy spaces to which a universal covering map belongs (cf. \cite[Theorem 3.1]{K-Domain}).
\begin{lemma}

	\label{lemma:Hp-integral}
	Let $0 < p < +\infty$. The following are equivalent:\\
	{\rm (a)}  $f_D \in H^p$. \\
	{\rm (b)}	$\int_0^{+\infty}r^{p-1}\psi_D(r)dr < +\infty.$\\
		{\rm (c)} $\int_D\abs{w}^{p-2}g_D(0,w)dA(w)< +\infty$. 
	\begin{proof}
		For $w\in D$, let $z_j(w),\;j=1,2,\dots$, be the preimages of $w$ by $f_D$.
		By Theorem \ref{thm:green} and a non-univalent change of variables (see \cite[Theorem 3.9]{EvansGariepy}), 
		\begin{eqnarray}
		\int_{\D}\abs{f_D(z)}^{p-2}\abs{f_D'(z)}^2\log\frac{1}{\abs{z}}dA(z) & =& \int_D\abs{w}^{p-2}\sum_j\log\frac{1}{\abs{z_j(w)}}dA(w) \nonumber \\
		& =& \int_D\abs{w}^{p-2}\sum_jg_{\D}(0,z_j(w))dA(w) \nonumber \\
		& = &\int_D\abs{w}^{p-2}g_D(0,w)dA(w) \nonumber \\
		& = &\int_0^{+\infty}r^{p-1}\psi_D(r)dr.\nonumber
		\end{eqnarray}
	 The result is, then, a direct consequence of Theorem \ref{thm:Hp}.
	\end{proof}
\end{lemma}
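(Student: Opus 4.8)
The plan is to establish the chain of equivalences via the Littlewood–Paley characterization of $H^p$ (Theorem \ref{thm:Hp}) together with Theorem \ref{thm:green}, which expresses the Green function of $D$ as a sum of Green functions of $\D$ over the covering-map fibers. The core identity to prove is that
$$\int_{\D}\abs{f_D(z)}^{p-2}\abs{f_D'(z)}^2\log\tfrac{1}{\abs{z}}\,dA(z) = \int_D\abs{w}^{p-2}g_D(0,w)\,dA(w) = \int_0^{+\infty}r^{p-1}\psi_D(r)\,dr;$$
once this is in hand, all three statements (a), (b), (c) become simultaneously finite or infinite, and the equivalence (a)$\Leftrightarrow$(c) is immediate from Theorem \ref{thm:Hp}.

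First I would pass from the disk integral to an integral over $D$ using the change of variables formula for non-univalent holomorphic maps (the area/coarea formula, as in \cite[Theorem 3.9]{EvansGariepy}). Writing $w = f_D(z)$, each point $w \in D$ is covered by the fiber $\{z_j(w)\}_j = f_D^{-1}(w)$, and the Jacobian factor $\abs{f_D'(z)}^2$ is exactly what converts $dA(z)$ into counting over the fiber against $dA(w)$. The factor $\abs{f_D(z)}^{p-2} = \abs{w}^{p-2}$ is constant along each fiber and pulls out, so the integrand becomes $\abs{w}^{p-2}\sum_j \log\frac{1}{\abs{z_j(w)}}\,dA(w)$. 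Next I would recognize $\log\frac{1}{\abs{z_j(w)}} = g_{\D}(0,z_j(w))$ from the explicit formula for the Green function of the disk, and then invoke Theorem \ref{thm:green} to collapse the sum $\sum_j g_{\D}(0,z_j(w))$ into the single quantity $g_D(0,w)$ (using symmetry of the Green function, $g_D(0,w)=g_D(w,0)$). This yields the second expression. The final passage to $\int_0^{+\infty} r^{p-1}\psi_D(r)\,dr$ is a polar-coordinate computation: writing $dA(w) = r\,dr\,d\theta$ with $w = re^{i\theta}$ and recalling that $g_D(\cdot,0)$ is extended by $0$ outside $D$, the angular integral of $\abs{w}^{p-2}g_D(re^{i\theta},0)$ over $\theta$ produces $r^{p-2}\psi_D(r)$, and the extra factor $r$ from the area element gives the stated $r^{p-1}\psi_D(r)$.

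The main obstacle, and the point deserving the most care, is justifying the non-univalent change of variables rigorously for the covering map $f_D$, which is not proper and has infinitely many sheets. One must ensure that the area formula applies to a map whose fibers are countably infinite and whose image exhausts $D$ with the correct multiplicity; since $f_D$ is holomorphic and locally biholomorphic (a covering map has nonvanishing derivative), the critical set is empty and the multiplicity function is simply the fiber cardinality, so the area formula is valid and the rearrangement of the sum under the integral is legitimate by Tonelli's theorem, all integrands being non-negative. A secondary point is confirming that the extension of $g_D(\cdot,0)$ by zero outside $D$ makes the integral over $\C$ agree with the integral over $D$, which holds because the extension vanishes precisely on the complement; no boundary contribution arises since $\partial D$ has the relevant measure-theoretic properties. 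With these justifications the three integrals are manifestly equal, and Theorem \ref{thm:Hp} closes the argument.
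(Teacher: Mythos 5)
Your proposal is correct and follows essentially the same route as the paper: the non-univalent change of variables converts the Littlewood--Paley integral into $\int_D\abs{w}^{p-2}\sum_j g_{\D}(0,z_j(w))\,dA(w)$, Theorem \ref{thm:green} collapses the fiber sum to $g_D(0,w)$, polar coordinates give $\int_0^{+\infty}r^{p-1}\psi_D(r)\,dr$, and Theorem \ref{thm:Hp} finishes. Your added remarks on Tonelli and the local biholomorphy of the covering map are sound justifications of the same steps the paper takes implicitly.
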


As a consequence of the latter characterization, we now prove one of our main results (cf. \cite[Theorem 1.2]{K-Domain}).
	\begin{proof}[Proof of Theorem \ref{thm:main-formula-Hardy}]
		The equality $\h(f_D) = \h(D)$ can be found in \cite[Lemma 2.1.(5)]{KimSugawa}.
		
		For the rest of the proof, in order to ease the notation, set
		$$p = \liminf_{r \to +\infty}\left(-\dfrac{\log \psi_D(r)}{\log r}\right).$$
		
		We start by showing that $p \leq \h(D)$. If $p = 0$, this is trivial. Suppose that $p > 0$ and choose $0 < q < p$ and $\epsilon > 0$. By the definition of $p$ there exists $R > 0$ such that
		$$-\dfrac{\log \psi_D(r)}{\log r} \geq p-\epsilon, \quad \text{for all } r > R.$$
		In that case, $\psi_D(r) \leq r^{\epsilon-p}$ for all $r > R$. Therefore,
		$$\int_0^{+\infty}r^{q-1}\psi_D(r)dr \leq \int_0^Rr^{q-1}\psi_D(r)dr + \int_R^{+\infty}r^{q-p+\epsilon-1}dr.$$
		In the right hand term, the first integral converges because $\psi_D(r)+2\pi\log(r)$ remains bounded as $r \to 0^+$, which follows the definition of the Green function $g_D$. The second integral converges if $0 < q < p-\epsilon$. As a result, it follows from Lemma \ref{lemma:Hp-integral} that $f_D \in H^{q}$ for all $0 < q < p-\epsilon$. Letting $\epsilon \to 0^+$, we deduce that $p \leq \h(D)$.

		To complete the proof, we argue by contradiction. Suppose  $p < \h(D)$. Choose some $p < q < \h(D)$. By definition, $f_D \in H^q$. Therefore, by Lemma \ref{lemma:Hp-integral},
		$$\int_0^{+\infty}t^{q-1}\psi_D(t)dt < +\infty.$$
		In particular, by Lemma \ref{lemma:decreasing}, for $r>0$,
\begin{align}
\label{eq:rq-bound}
\dfrac{r^q}{q}\psi_D(r) & = \psi_D(r)\int_0^rt^{q-1}dt \leq \int_0^rt^{q-1}\psi_D(t)dt \\
& \leq \int_0^{+\infty}t^{q-1}\psi_D(t)dt < +\infty. \nonumber
\end{align}
		This means that there exists $C > 0$ such that $r^q\psi_D(r) \leq C$ for all $r > 0$. Thus,
		$$\liminf_{r \to +\infty}\left(-\dfrac{\log \psi_D(r)}{\log r}\right) \geq q,$$
		which yields $p \geq q$, a contradiction.
	\end{proof}


\section{Proof of Theorem \ref{thm:Hardy-neq-Bergman}}
Let us commence with the proof of the first part of Theorem \ref{thm:Hardy-neq-Bergman}.
\begin{proof}[Proof of Theorem \ref{thm:Hardy-neq-Bergman}.(a)]
Assume first that $0<p<+\infty$. By \cite[Theorem 5.2]{CCZKRP}, there exists a domain $\Omega \subset \C$  such that $\h(\Omega) = p$. Set $A=\{m+in: m,n \in \Z\}$. Consider the domain $D=\Omega\setminus A$. Since $A$ is  a polar set (see \cite[Theorem 2.4]{KimSugawa}), $\h(D)=\h(\Omega)=p$. 

On the other hand, $D$ does not contain arbitrarily large disks, and therefore it is a Bloch domain; see Theorem \ref{thm:Bloch-domain}. Hence, from Corollary \ref{cor:Bloch-imples-binfinity}, we have that $\b(D)=\b_{\alpha}(D)=+\infty$, $\alpha > -1$. 

If $p=0$, we repeat the same argument as above with $\Omega = \C$.
\end{proof}

Now, for the proof of the second part of Theorem \ref{thm:Hardy-neq-Bergman}, we construct a domain $D \subset \C$ with the following three properties:
(i) $D$ is regular, (ii) $\b(D) = \b_{\alpha}(D) = +\infty$, $\alpha > -1$, and (iii) $\h(D) = 1/2$.
The domain $D$ is of the form $D = \C \setminus ((-\infty,-1] \cup C)$, where $C$ is a collection of non-trivial circular arcs (i.e., none of them is a singleton). In particular, by \cite[Theorem 4.2.2]{Ransford}, $D$ is a regular domain (notice that infinity, considered as a point on the boundary of $D$, is also regular). The arcs on $C$ will be located so that $D$ is a Bloch domain; see Theorem \ref{thm:Bloch-domain}. Once more, by Corollary \ref{cor:Bloch-imples-binfinity}, this assures that $\b(D) = \b_{\alpha}(D) = +\infty$, $\alpha > -1$, at once. The main technical work comes from showing that the length of the arcs of $C$ can be chosen small enough so that $\h(D) = 1/2$ (recall that $\h(\C \setminus (-\infty,-1]) = 1/2$).

For the construction, we will need some lemmas for harmonic measure. For instance, we will strongly use the following continuity argument as a way to obtain lower estimates for harmonic measure.
\begin{lemma}
	\label{lemma:continuity}
	Let $R > 0$, and consider the domain $\Omega = \{z \in \C : \abs{z} < R\} \setminus (-R,-1]$. Let $K \subset \Omega$ be a compact set such that $\Omega \setminus K$ is a regular domain for the Dirichlet problem. Let $\{K_{\delta}\}_{\delta \in (0,1]}$ be a family of compact subsets of $\Omega \setminus K$ with the following properties:\\
	{\rm (i)} The family is increasing. That is, $K_{\delta} \subset K_{\delta'}$ if and only if $\delta \leq \delta'$.\\
	{\rm 	(ii)}	$K^* := \cap_{\delta \in (0,1]}K_{\delta}$ is a finite set.

	For $\delta \in (0,1]$, consider the domain $\Omega_{\delta} := \Omega \setminus (K \cup K_{\delta})$, and define
	$$\omega_{\delta}(z) := \omega(z,C_R,\Omega_{\delta}), \qquad z \in \Omega_{\delta},$$
	where $C_R := \{z \in \C : \abs{z} = R\}$. Then, for every $z \in \Omega^* := \Omega \setminus (K \cup K^*)$ we have that
	$$\lim_{\delta \to 0^+}\omega_{\delta}(z) = \omega(z,C_R,\Omega \setminus K).$$
\end{lemma}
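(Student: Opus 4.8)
The plan is to reduce the statement, via the Strong Markov Property \eqref{markov property}, to the single assertion that the harmonic measure of the shrinking obstacle $K_\delta$ tends to $0$, and then to prove that last fact from the polarity of the finite set $K^*$ together with the logarithmic singularity of the Green function. Write $U:=\Omega\setminus K$; since $\Omega$ is bounded, $U$ is a bounded regular (hence Greenian) domain, and $\Omega_\delta=U\setminus K_\delta$ with $K_\delta$ a compact subset of the open set $U$, so $\partial K_\delta\subset U$. As $C_R\subset\partial\Omega_\delta\cap\partial U$, applying \eqref{markov property} with $\Omega_1=\Omega_\delta\subset\Omega_2=U$ and $E=C_R$, and using $\partial\Omega_\delta\cap U=\partial K_\delta$, gives
\[
\omega(z,C_R,U)-\omega_\delta(z)=\int_{\partial K_\delta}\omega(\zeta,C_R,U)\,\omega(z,d\zeta,\Omega_\delta).
\]
Bounding $\omega(\zeta,C_R,U)\le 1$ yields
\[
0\le \omega(z,C_R,U)-\omega_\delta(z)\le \omega(z,\partial K_\delta,\Omega_\delta)=:\eta_\delta(z),
\]
so it suffices to show $\eta_\delta(z)\to 0$ as $\delta\to 0^+$ for each fixed $z\in\Omega^*$ (note $z\notin K^*$, hence $z\in\Omega_\delta$ for all small $\delta$).

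First I would record an \emph{obstacle-monotonicity} principle: if $F_1\subset F_2$ are compacts in $U$ with $F_1\subset\mathrm{int}\,F_2$, then $\omega(\cdot,\partial F_1,U\setminus F_1)\le \omega(\cdot,\partial F_2,U\setminus F_2)$ on $U\setminus F_2$, which follows from the maximum principle by comparing boundary values on $\partial U$ (both $0$ n.e.) and on $\partial F_2$ (the larger obstacle has value $1$). Fix $z\in\Omega^*$ and $\epsilon>0$, write $K^*=\{a_1,\dots,a_m\}$, and set $L_r=\bigcup_{j=1}^m \overline{D(a_j,r)}$ with $r$ small enough that $z\notin L_r$. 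A nested-compactness argument (the decreasing compacts $K_\delta\setminus\mathrm{int}\,L_r$ have empty total intersection $K^*\setminus\mathrm{int}\,L_r$) shows $K_\delta\subset\mathrm{int}\,L_r$ for all sufficiently small $\delta$, so the monotonicity principle gives $\eta_\delta(z)\le \omega(z,\partial L_r,U\setminus L_r)$. A further maximum-principle (union-bound) comparison with the harmonic function $\sum_{j}\omega(\cdot,\partial D(a_j,r),U\setminus \overline{D(a_j,r)})$, which dominates on $\partial L_r$ and vanishes n.e. on $\partial U$, yields
\[
\eta_\delta(z)\le \sum_{j=1}^m \omega\bigl(z,\partial D(a_j,r),\,U\setminus \overline{D(a_j,r)}\bigr).
\]

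It then remains to bound each single-disk term. For this I would use the Green function $g_U(\cdot,a_j)$, which is positive and bounded away from $a_j$ and, by property (ii) of the Green function, satisfies $g_U(w,a_j)=-\log\abs{w-a_j}+\mathrm{O}(1)$; hence $M_r^{(j)}:=\min_{\abs{w-a_j}=r}g_U(w,a_j)\to+\infty$ as $r\to 0$. Comparing the bounded harmonic function $g_U(\cdot,a_j)/M_r^{(j)}$ with the single-disk harmonic measure (it dominates on $\partial D(a_j,r)$ and both vanish n.e. on $\partial U$) gives $\omega(z,\partial D(a_j,r),U\setminus\overline{D(a_j,r)})\le g_U(z,a_j)/M_r^{(j)}\to 0$. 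Choosing $r$ so small that the finite sum is $<\epsilon$, and then $\delta$ small enough that $K_\delta\subset\mathrm{int}\,L_r$, gives $\eta_\delta(z)<\epsilon$, which completes the proof. The main obstacle is precisely this last transfer: the polarity of $K^*$ only asserts directly that the harmonic measure of the \emph{limit} set vanishes, whereas one needs uniform smallness of the harmonic measure of the genuinely shrinking obstacles $K_\delta$. The Green-function estimate is what makes the polarity quantitative, while the nested-compactness and the several maximum-principle comparisons are what allow one to compare harmonic measures that live on the varying domains $\Omega_\delta$.
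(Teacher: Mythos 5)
Your argument is correct, but it runs along a genuinely different track from the paper's. The paper's proof is soft: it uses the maximum principle to see that $\delta\mapsto\omega_\delta(z)$ is monotone, takes the (Harnack) limit $\omega^*$ of this bounded increasing family of harmonic functions, removes the singularities of $\omega^*$ at the finitely many points of $K^*$ (a bounded harmonic function extends across a finite, hence polar, set), and then identifies $\omega^*$ with $\omega(\cdot,C_R,\Omega\setminus K)$ by checking boundary values and invoking the uniqueness part of the solution of the Dirichlet problem on the regular domain $\Omega\setminus K$. You instead make the statement quantitative: the Strong Markov Property reduces everything to showing $\omega(z,\partial K_\delta,\Omega_\delta)\to 0$, and you establish this by trapping $K_\delta$ inside small disks around the points of $K^*$ (your nested-compactness step, which is where hypothesis (i) enters for you, versus entering through monotonicity of $\omega_\delta$ for the paper) and estimating the harmonic measure of each small disk by $g_U(z,a_j)/M_r^{(j)}$ with $M_r^{(j)}\to+\infty$. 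Your route costs more machinery (obstacle monotonicity, a union bound, Green-function comparisons, each needing an extended maximum principle ``n.e.''~on the boundary), but it buys an explicit rate of convergence of $\omega_\delta(z)$ and makes transparent that only polarity of $K^*$ is really used; the paper's route is shorter and avoids Green functions, at the price of having to identify the boundary behaviour of the limit function. Two shared technicalities that neither treatment makes explicit, and that you should at least acknowledge: $\Omega_\delta$ need not be connected (one works with the component containing $z$, per the paper's convention, which slightly perturbs the identity $\partial\Omega_\delta\cap U=\partial K_\delta$ but only in the harmless direction for your inequality), and the regular points of $\partial U$ remain regular for $U\setminus F$ when $F$ is compactly contained in $U$, since regularity is a local property.
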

\begin{proof}
	Let $\delta > 0$, and assume that $z \in \Omega_{\delta}$. By the maximum principle we see that $\omega_{\delta_1}(z) \geq \omega_{\delta_2}(z)$ whenever $0 < \delta_1 \leq \delta_2 \leq \delta$. Therefore, since the harmonic maps $\omega_{\delta}$ are uniformly bounded by $1$, we see that the function
	\begin{equation}
		\omega^*(z) = \lim_{\delta \to 0^+}\omega_{\delta}(z)
		\end{equation}
	is well-defined and it is harmonic on $\Omega_{\delta}$. 
	
	Moreover, since $\delta > 0$ is arbitrary, we see that $\omega^*$ is well-defined and harmonic on $\Omega^*$. But, since it is bounded by above and below, the singularities of $\omega^*$ on $K^*$ are removable. Therefore, we can define $\omega^*$ on $K^*$ so that it is a harmonic map on $\Omega \setminus K$.
	
	By the boundary properties of harmonic measure given in \cite[Theorem 4.3.4]{Ransford} and the regularity of $\Omega \setminus K$ it is easy to see that 
	$$\lim_{z \to \xi}\omega^*(z) = 1, \qquad \xi \in C_R \setminus \{-R\},$$
	$$\lim_{z \to \xi}\omega^*(z) = 0, \qquad \xi \in \partial(\Omega \setminus K) \setminus C_R.$$
	Therefore, by the uniqueness property in \cite[Theorem 4.3.4]{Ransford}, 
		\begin{equation}
		\omega^*(z) = \omega(z,C_R,\Omega \setminus K), \qquad z \in \Omega \setminus K.
		\end{equation}
\end{proof}

\begin{lemma}
	\label{lemma:rate}
	Let $0 < R_1 < R$ and $S = \{z \in \C: \abs{z} = R, \, \mathrm{Re}(z) \ge 0\}$.
	There exists a constant $C= C(R_1,R) > 0$ with the following property: If $R_2>R$,  $\Omega = \{z \in \C: R_1 < \abs{z} < R_2\} \setminus (-R_2,-R_1)$,  $T = \{z \in \C: \abs{z} = R_2\}$, and $z\in S$, then
	\begin{equation}\label{rate-e}
	\omega(z,T,\Omega) \geq \dfrac{C}{\sqrt{R_2}}.
	\end{equation}
	\begin{proof}
		Assume first that $R_2<2R$. 
		Set
		\begin{eqnarray}
		\Omega_1&=& \{z \in \C: R_1 < \abs{z} < 2R\} \setminus (-2R,-R_1),
		\nonumber \\
		\Omega_2&=& \left\lbrace z \in \C: \frac{R_1}{2R} < \abs{z} < 1\right\rbrace \setminus \left(-1,-\frac{R_1}{2R}\right).
		\nonumber\end{eqnarray}
		By a polarization inequality (see Theorem \ref{polarization}.(b)), the maximum principle, and the conformal invariance of harmonic measure, for every $z\in S$,
		\begin{eqnarray}
        \label{rate-p1}
		\omega(z,T,\Omega) &\geq & \omega(iR, T,\Omega)\geq \omega(iR, \{|z|=2R\}, \Omega_1)\\
		&=&\omega(\frac{i}{2},\partial \D, \Omega_2). \nonumber
		\end{eqnarray}
		It follows that $\omega(z,T,\Omega)$ is bounded below by a positive constant depending 
		only on $R$ and $R_1$; so (\ref{rate-e}) holds.

		Suppose next that $R_2\geq 2R$. By Harnack's inequality \cite[Corollary 1.3.3]{Ransford} and the domain monotonicity of Harnack's constant \cite[Corollary 1.3.7]{Ransford}, there exists a constant $C_1>0$ depending only on $R$ and $R_1$ so that
		\begin{equation}\label{rate-p3}
		\omega(z,T,\Omega)\geq  \omega(iR, T,\Omega) \geq C_1\; \omega(R,T,\Omega),\;\;\; z\in S.
		\end{equation}
		Set
		\begin{eqnarray}
		\Omega_3&=&\C\setminus (\{|z|\leq R_1\}\cup (-\infty,-R_1)),\nonumber \\
		\Omega_4&=&\D\setminus (-1,0).\nonumber
		\end{eqnarray}
		By the maximum principle and conformal invariance,
		\begin{equation}
		\omega(R,T,\Omega)\ge\omega(R,(-\infty,-R_2],\Omega_3)=\omega\left (\frac{R_1}{R}, \left [-\frac{R_1}{R_2},0\right ],\Omega_4\right ).
		\end{equation}
		To estimate the latter harmonic measure, we use the Koebe function and a square root map. It follows easily that there exists a constant $C_2=C_2(R_1, R)$ such that
		\begin{equation}\label{rate-p5}
		\omega\left (\frac{R_1}{R}, \left [-\frac{R_1}{R_2},0\right ],\Omega_4\right )\ge \frac{C_2}{\sqrt{R_2}}.
		\end{equation}
		Combining (\ref{rate-p3})-(\ref{rate-p5}), we see that (\ref{rate-e}) is true. 
	\end{proof}
\end{lemma}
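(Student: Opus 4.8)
The plan is to treat the two regimes $R<R_2<2R$ and $R_2\geq 2R$ separately, because the claimed rate $C/\sqrt{R_2}$ in \eqref{rate-e} only carries content when $R_2$ is large: once $R_2\geq 2R$ the factor $1/\sqrt{R_2}$ must be produced honestly, whereas for $R_2$ comparable to $R$ the quantity $\sqrt{R_2}$ is bounded and it suffices to exhibit a uniform positive lower bound. In the bounded regime $R<R_2<2R$ I would first move the base point from an arbitrary $z\in S$ to a fixed point (such as $iR$) by a polarization inequality (Theorem~\ref{polarization}(b)), then enlarge the outer radius to $2R$ via the maximum principle, and finally rescale by conformal invariance to a single reference domain independent of $R_2$; the harmonic measure of the unit circle from a fixed interior point of that reference domain is a positive constant $C(R_1,R)$, and since $\sqrt{R_2}<\sqrt{2R}$ this settles the regime.

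For the substantive regime $R_2\geq 2R$ I would proceed through four reductions, arranging each so that it only \emph{decreases} the harmonic measure and hence preserves a valid lower bound. First, to detach the estimate from the particular point $z\in S$, I would apply Harnack's inequality to the positive harmonic function $z\mapsto\omega(z,T,\Omega)$ on the fixed compact set $S$, obtaining $\omega(z,T,\Omega)\geq C_1\,\omega(R,T,\Omega)$ for all $z\in S$. The essential point is that the Harnack constant $C_1$ can be taken independent of $R_2$: by the domain monotonicity of Harnack's constant, every $\Omega$ with $R_2\geq 2R$ contains the $R_2=2R$ domain, so $C_1=C_1(R_1,R)$.

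Second, I would enlarge the domain to $\Omega_3=\C\setminus(\{\abs{z}\leq R_1\}\cup(-\infty,-R_1))$ and replace the target circle $T$ by the far ray $(-\infty,-R_2]$. The inequality $\omega(R,T,\Omega)\geq\omega(R,(-\infty,-R_2],\Omega_3)$ is a maximum-principle comparison on $\Omega$: the function $\omega(\cdot,(-\infty,-R_2],\Omega_3)$ is harmonic on $\Omega$, vanishes on the inner circle $\abs{z}=R_1$ and on the slit $(-R_2,-R_1)$, and is at most $1$ on $T$, so it lies below $\omega(\cdot,T,\Omega)$ on all of $\partial\Omega$. Third, I would uniformize $\Omega_3$ by the inversion $z\mapsto R_1/z$, which carries the exterior of $\{\abs{z}\leq R_1\}$ onto $\D$, the slit $(-\infty,-R_1)$ onto $(-1,0)$, the base point $R$ onto $R_1/R$, and the target ray onto the small tip segment $[-R_1/R_2,0]$, yielding $\omega(R,(-\infty,-R_2],\Omega_3)=\omega(R_1/R,[-R_1/R_2,0],\Omega_4)$ with $\Omega_4=\D\setminus(-1,0)$.

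The fourth step, which I expect to be the main obstacle, is the lower bound $\omega(R_1/R,[-R_1/R_2,0],\Omega_4)\geq C_2/\sqrt{R_2}$. Here the target is a segment of length $\sim R_2^{-1}$ located exactly at the \emph{tip} $0$ of the slit, and it is precisely the square-root behaviour of harmonic functions at a slit tip that generates the exponent $1/2$: unfolding the slit by a square-root map (after a Koebe-type normalization to place the domain in standard position) converts the length-$R_2^{-1}$ tip segment into a boundary arc of length $\sim R_2^{-1/2}$, whose harmonic measure from a fixed interior point is $\gtrsim R_2^{-1/2}$, with constant depending only on $R_1,R$. Chaining the four reductions gives $\omega(z,T,\Omega)\geq C_1C_2/\sqrt{R_2}$ for $R_2\geq 2R$, and together with the bounded regime this proves \eqref{rate-e}. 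The two delicate points throughout are keeping every constant independent of $R_2$ — secured above by the domain monotonicity of Harnack's constant and by scaling — and correctly extracting the exponent $1/2$ from the slit tip, which is the crux of the argument.
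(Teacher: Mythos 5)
Your proposal is correct and follows essentially the same route as the paper: the same case split at $R_2=2R$, polarization plus the maximum principle and rescaling in the bounded regime, and in the main regime Harnack with domain monotonicity of the Harnack constant, enlargement to $\Omega_3=\C\setminus(\{|z|\le R_1\}\cup(-\infty,-R_1])$ with the target replaced by the far ray, the inversion $z\mapsto R_1/z$ onto $\D\setminus(-1,0)$, and the square-root unfolding at the slit tip to extract the exponent $1/2$. The only cosmetic difference is that you apply Harnack directly on the compact arc $S$ rather than first passing through the point $iR$, which changes nothing.
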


We will also use the following result dealing with symmetry.
\begin{lemma}
	\label{lemma:symmetry}
	Let $R > 1$, and consider the domain $\Omega = \{z \in \C : \abs{z} < R\} \setminus (-R,-1]$. Let $K \subset \Omega$ be a compact set which is symmetric with respect to the imaginary axis.  Assume $0\notin K$. Define $C_R = \{z \in \C : \abs{z} = R\}$ and $C_R^+ = C_R \cap \{z \in \C : \mathrm{Re}(z) > 0\}$. It holds that
	$$\omega(0,C_R^+,\Omega \setminus K) \geq \dfrac{1}{2}\omega(0,C_R,\Omega \setminus K).$$
	\begin{proof}
		The result follows from a polarization argument; we use Theorem \ref{polarization} (a) to obtain
		$$\omega(0,C_R^-,\Omega \setminus K) \leq \omega(0,C_R^+,\Omega \setminus K),$$
		from which the result follows easily.
	\end{proof}
\end{lemma}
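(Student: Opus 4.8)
The plan is to split the full circle $C_R$ into its right and left halves and reduce the claimed inequality to the single comparison $\omega(0,C_R^-,\Omega\setminus K) \le \omega(0,C_R^+,\Omega\setminus K)$, where $C_R^- = C_R \cap \{\mathrm{Re}(z) < 0\}$. Indeed, the two points $\pm iR$ at which $C_R$ meets the imaginary axis form a finite (hence polar) set, so the probability measure $\omega(0,\cdot,\Omega\setminus K)$ gives them no mass and $\omega(0,C_R,\Omega\setminus K) = \omega(0,C_R^+,\Omega\setminus K) + \omega(0,C_R^-,\Omega\setminus K)$. Once the comparison above is in hand, the right-hand side is at most $2\,\omega(0,C_R^+,\Omega\setminus K)$, which is exactly the assertion.

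To establish the comparison I would invoke Solynin's polarization result, Theorem \ref{polarization}(a), taking the dividing line $\ell$ to be the imaginary axis, so that $H^+ = \{\mathrm{Re}(z) > 0\}$, $H^- = \{\mathrm{Re}(z) < 0\}$, and the reflection $*$ is $z \mapsto -\overline{z}$. Write $G = \Omega \setminus K$ and $G^\pm = G \cap H^\pm$. The point $z = 0$ lies in $\ell \cap G$ (this is where the hypothesis $0 \notin K$ enters), and for the Borel set $E = C_R^-$ one has $E \subset \partial G \cap \partial G^-$ together with $E^* = C_R^+ \subset \partial G$. Thus the pointwise and boundary hypotheses of Theorem \ref{polarization}(a) are satisfied, and its conclusion $\omega(0,E,G) \le \omega(0,E^*,G)$ is precisely the comparison I want.

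The one step that needs genuine verification — and which I expect to carry essentially all the content — is the geometric hypothesis $(G^-)^* \subset G^+$ of Theorem \ref{polarization}. Here I would use two features of the configuration. First, the slit $(-R,-1]$ lies entirely in the closed left half-plane, so it plays no role in $G^+$, whereas its reflection $[1,R)$ is deleted when forming $(G^-)^*$; this extra removed piece only shrinks $(G^-)^*$. Second, the symmetry of $K$ about the imaginary axis gives $K^* = K$, hence $(K \cap H^-)^* = K \cap H^+$, so the part of $K$ deleted from $G^+$ reappears exactly as the reflected part of $K$ deleted from $(G^-)^*$. Combining these, the set removed from the right half-disk to form $(G^-)^*$ is $[1,R) \cup (K \cap H^+)$, which contains the set $K \cap H^+$ removed to form $G^+$; therefore $(G^-)^* \subset G^+$, as required. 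With this inclusion verified, the polarization inequality and the splitting of $C_R$ from the first paragraph complete the argument.
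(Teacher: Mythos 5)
Your argument is correct and is essentially the paper's proof: the paper likewise applies Theorem \ref{polarization}(a) with the imaginary axis as the polarizing line to get $\omega(0,C_R^-,\Omega\setminus K)\le\omega(0,C_R^+,\Omega\setminus K)$ and then concludes. You have simply filled in the details the paper leaves implicit, namely the verification of $(G^-)^*\subset G^+$ (using that the slit reflects into $[1,R)$ and that $K^*=K$) and the fact that the two points $\pm iR$ carry no harmonic measure, both of which are correct.
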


The following lemma will be one of the main arguments in our construction.
\begin{lemma}
	\label{lemma:induction}
	Let $0 < R_1 < R$ and  $K \subset \{z \in \C : \abs{z} \leq R_1\}$ be a compact set which is symmetric with respect to the imaginary axis.  There exists a constant $C = C(R_1,R) > 0$ with the following property: If $R \leq r \leq R_2$ and
	$$\Omega_r = \{z \in \C : \abs{z} < r\} \setminus ((-r,-1] \cup K), \qquad C_r = \{z \in \C : \abs{z} = r\},$$
	then
	$$\omega(0,C_{R_2},\Omega_{R_2}) \geq \dfrac{C}{\sqrt{R_2}}\omega(0,C_R,\Omega_R).$$
	\begin{proof}
		The proof is a combination of the Markov Property for harmonic measure \eqref{markov property} and  Lemmas \ref{lemma:rate} and \ref{lemma:symmetry}.
		
		More precisely, by the Markov Property \eqref{markov property},
		\begin{eqnarray}
		\omega(0,C_{R_2},\Omega_{R_2}) & =& \int_{C_R}\omega(\alpha,C_{R_2},\Omega_{R_2})\omega(0,d\alpha,\Omega_{R})\\ &\geq & \int_{C_R^+}\omega(\alpha,C_{R_2},\Omega_{R_2})\omega(0,d\alpha,\Omega_{R}) \nonumber \\
		& \geq & \inf_{\alpha \in C_R^+}\omega(\alpha,C_{R_2},\Omega_{R_2})\omega(0,C_R^+,\Omega_{R}),\nonumber
		\end{eqnarray}
		where $C_R^+ = C_R \cap \{z \in \C : \mathrm{Re}(z) > 0\}$. From Lemma \ref{lemma:symmetry} we see that
		$$\omega(0,C_R^+,\Omega_{R}) \geq \dfrac{1}{2}\omega(0,C_R,\Omega_{R}).$$
		Let us also consider
		$$\Omega = \{z \in \C : R_1 < \abs{z} < R_2\} \setminus (-R_2,-R_1).$$
		Using the Maximum Principle we see that
		$$\inf_{\alpha \in C_R^+}\omega(\alpha,C_{R_2},\Omega_{R_2}) \geq \inf_{\alpha \in C_R^+}\omega(\alpha,C_{R_2},\Omega).$$
		From this, the proof follows immediately using Lemma \ref{lemma:rate}.
	\end{proof}
\end{lemma}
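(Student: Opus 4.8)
The plan is to run a two-stage relay argument on harmonic measure: a path from $0$ that reaches the outer circle $C_{R_2}$ while staying in $\Omega_{R_2}$ must first leave the smaller domain $\Omega_R$, and the only part of $\partial\Omega_R$ interior to $\Omega_{R_2}$ is the circle $C_R$ (the slit $(-R,-1]$ and $K$ also lie on $\partial\Omega_{R_2}$, so they contribute nothing). Applying the Strong Markov Property \eqref{markov property} with $\Omega_1=\Omega_R\subset\Omega_2=\Omega_{R_2}$ and $E=C_{R_2}$, and noting that the term $\omega(0,C_{R_2},\Omega_R)$ vanishes because $C_{R_2}\cap\partial\Omega_R=\emptyset$, I would obtain the exact identity
$$\omega(0,C_{R_2},\Omega_{R_2}) = \int_{C_R}\omega(\zeta,C_{R_2},\Omega_{R_2})\,\omega(0,d\zeta,\Omega_R).$$

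First I would restrict the integral to the right semicircle $C_R^+ = C_R\cap\{\mathrm{Re}(z)>0\}$ and pull out the infimum of the second-stage measure, giving
$$\omega(0,C_{R_2},\Omega_{R_2}) \ge \Big(\inf_{\zeta\in C_R^+}\omega(\zeta,C_{R_2},\Omega_{R_2})\Big)\,\omega(0,C_R^+,\Omega_R).$$
The surviving factor $\omega(0,C_R^+,\Omega_R)$ is dealt with at once by Lemma \ref{lemma:symmetry}: the symmetry of $K$ about the imaginary axis is exactly the hypothesis assumed here, and $0\in\Omega_R$ forces $0\notin K$, so $\omega(0,C_R^+,\Omega_R)\ge\tfrac12\,\omega(0,C_R,\Omega_R)$.

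The remaining and main task is the uniform lower bound $\inf_{\zeta\in C_R^+}\omega(\zeta,C_{R_2},\Omega_{R_2})\ge c/\sqrt{R_2}$ with $c=c(R_1,R)$. Here I would discard $K$ and enlarge the slit inward, comparing with the $K$-free annular slit domain $\Omega=\{R_1<|z|<R_2\}\setminus(-R_2,-R_1)$. Since $K\subset\{|z|\le R_1\}$ one checks $\Omega\subset\Omega_{R_2}$ with $C_{R_2}\subset\partial\Omega\cap\partial\Omega_{R_2}$, so domain monotonicity (again a direct consequence of \eqref{markov property}, the extra boundary term being nonnegative) yields $\omega(\zeta,C_{R_2},\Omega_{R_2})\ge\omega(\zeta,C_{R_2},\Omega)$ for every $\zeta\in C_R^+$. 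As $C_R^+$ lies inside the semicircle $S$ of Lemma \ref{lemma:rate} and $\Omega$ is precisely the domain appearing there with $T=C_{R_2}$, Lemma \ref{lemma:rate} gives $\omega(\zeta,C_{R_2},\Omega)\ge C(R_1,R)/\sqrt{R_2}$ uniformly in $\zeta\in C_R^+$. Multiplying the three estimates proves the claim, the constant being half the constant furnished by Lemma \ref{lemma:rate}.

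I expect the delicate points to be bookkeeping rather than genuine analysis: confirming that $C_R$ is, up to the single endpoint $-R$ (negligible for harmonic measure), all of $\partial\Omega_R\cap\Omega_{R_2}$, and verifying the inclusion $\Omega\subset\Omega_{R_2}$ together with $C_{R_2}\subset\partial\Omega\cap\partial\Omega_{R_2}$ so that the monotonicity step is legitimate. Once these set-theoretic verifications are in place, the three ingredients (the Markov decomposition, Lemma \ref{lemma:symmetry}, and Lemma \ref{lemma:rate}) assemble directly into the stated inequality.
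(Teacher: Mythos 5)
Your proof is correct and follows essentially the same route as the paper's: the Markov decomposition over $C_R$, restriction to $C_R^+$ with Lemma \ref{lemma:symmetry}, and comparison with the annular slit domain $\Omega=\{R_1<|z|<R_2\}\setminus(-R_2,-R_1)$ via monotonicity so that Lemma \ref{lemma:rate} applies. The set-theoretic verifications you flag (the vanishing of the first Markov term, $\Omega\subset\Omega_{R_2}$, and $C_R^+\subset S$) all check out.
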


\medskip

We are now ready to present the construction in Theorem \ref{thm:Hardy-neq-Bergman}.(b). The domain $D$ is given by
$$D = D(\alpha_1,\ldots,\alpha_n,\ldots) = \C \setminus \left((-\infty,-1] \cup \bigcup_{n \in \N} C(n,\alpha_n)\right),$$
where
$$C(n,\alpha) = \bigcup_{j = 0}^{2n-1}I(n,\alpha,j), \qquad I(n,\alpha,j) = \{np(n,j)e^{i\theta} : \abs{\theta} \leq \alpha\},$$
$$p(n,j) = e^{i\pi j/n}.$$
In other words, the complement of the domain $D$ is given by the half-line $(-\infty,-1]$ and by a collection of circular arcs. More precisely, given $n \in \N$, the complement of $D$ contains $2n$ circular arcs of radius $n$ and center zero. The midpoint of these arcs are uniformly distributed following the $2n$-th roots of unity, and its angular width is controlled by the coefficient $\alpha_n$.

\begin{figure}[h]
\centering
\begin{tikzpicture}[scale=0.3]

\draw (-8.5,0) -- (-1,0);

\foreach \n in {1,...,8}{
\foreach \j in {-\n,...,\n}{
\coordinate (P) at ({\n*cos(180*\j/\n-50/\n^2)},{\n*sin(180*\j/\n-50/\n^2)});
\draw (P) arc ({180*\j/\n-50/\n^2}:{180*\j/\n+50/\n^2}:\n);
}
}
\end{tikzpicture}
\caption{Domain $D$ for some choice of the coefficients $\{\alpha_n\}$.}
\end{figure}
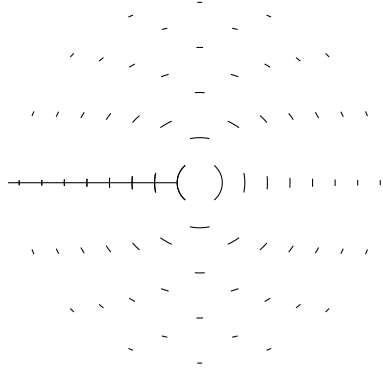

From this, we see that $D$ is clearly a regular Bloch domain. In the following proof we justify that the coefficients $\{\alpha_n\}$ can be chosen so small that $\h(D) = 1/2$.

\begin{proof}[Proof of Theorem \ref{thm:Hardy-neq-Bergman}.(b)]
	We introduce a useful notation. Given $\alpha_1, \ldots, \alpha_n > 0$ and $r > n$, we set
	$$D(r;\alpha_1,\ldots,\alpha_n) := \{z \in \C : \abs{z} < r\} \setminus \left((-r,-1] \cup \bigcup_{k = 1}^nC(k,\alpha_k)\right).$$
	As usual, we will also use the notation $C_r = \{z \in \C : \abs{z} = r\}$ for $r > 0$.
	
	The coefficients $\{\alpha_n\}$ are constructed inductively. Namely, let $A > 0$ be such that
	$$\omega(0,C_2,D(2;0)) = \omega(0,C_2,\{z \in \C : \abs{z} < 2\} \setminus (-2,-1]) = \dfrac{A}{\sqrt{2}\log(2)}.$$
	Using Lemma \ref{lemma:continuity} (with $K = \emptyset$ and $K_{\delta} = C(1,\delta)$) we can find $\alpha_1 > 0$ so that
	$$\omega(0,C_2,D(2;\alpha_1)) \geq \dfrac{A}{2\sqrt{2}\log(2)}.$$
	
	Now, assume that we have found $\alpha_1, \ldots, \alpha_n > 0$ and $n < r \leq n+1$ so that
	$$\omega(0,C_r,D(r;\alpha_1,\ldots,\alpha_n)) \geq \dfrac{A}{2\sqrt{r}\log(r)}.$$
	Using Lemma \ref{lemma:induction} we see that there exists $B = B(n,r) > 0$ such that
	\begin{equation*}
	\omega(0,C_R,D(R;\alpha_1,\ldots,\alpha_n)) \geq \dfrac{B}{\sqrt{R}}, \qquad R \geq r.
	\end{equation*}
	
	
    In particular, it is clear that
    $$\lim_{R \to \infty} \log(R)\sqrt{R}\omega(0,C_R,D(R;\alpha_1,\ldots,\alpha_n)) = +\infty.$$
    Thus, we can find $R > n+1$ such that
    $$\omega(0,C_R,D(R;\alpha_1,\ldots,\alpha_n)) \geq \dfrac{A}{\sqrt{R}\log(R)}.$$    
    In that case, let $m = \sup(\{k \in \N : k < R\})$, and notice that $m \geq n+1$. Using Lemma \ref{lemma:continuity} (here, we use
	$$K = \bigcup_{k = 1}^nC(k,\alpha_k), \qquad K_{\delta} = \bigcup_{k = n+1}^{m}C(k,\delta))$$
	we can find $\alpha_{n+1}, \ldots, \alpha_m > 0$ such that
	$$\omega(0,C_R,D(R;\alpha_1,\ldots,\alpha_m)) \geq \dfrac{A}{2\sqrt{R}\log(R)}.$$
	
	By the construction, it is clear that we can find $\{\alpha_n > 0\}$ and $\{r_{n_k} > 0\}$ with $n_k < r_{n_k} \leq n_k+1$ such that
	\begin{equation}
	\label{eq:estimate}
	\omega(0,C_{r_{n_k}},D(r_{n_k};\alpha_1,\ldots,\alpha_{n_k})) \geq \dfrac{A}{2\sqrt{r_{n_k}}\log(r_{n_k})}.
	\end{equation}
	
	Let $D := D(\alpha_1,\ldots,\alpha_n,\ldots)$. We know that $\h(D) \geq 1/2$. By \eqref{eq:estimate} and the Ess\'en-Kim-Sugawa formula (see Subsection \ref{subsec:EKS}), we conclude that $\h(D) = 1/2$.
\end{proof}
\begin{remark}
Let $p \in [1/2,+\infty)$. Modifying the geometry of the latter proof (i.e., replacing $C \setminus (-\infty,-1]$ by some angular region), one could argue that there exists a regular Bloch domain $D \subset \C$ for which $\h(D) = p$.
\end{remark}


\section{Properties of domains in the class $\mathcal{D}$}
\label{sec:class-D-lemmas}

In this section, for a domain $D$ in the class $\mathcal{D}$, we will use the notation set in Definition \ref{def:classD}: $F$ is the union of the bounded components of $\C_\infty\setminus D$ and $\Omega$ is the simply connected domain $D \cup F$. 

To begin, we notice the following fact: if $\Omega=\C$, the set $\C\setminus D$ is bounded. So, by Lemma \ref{lemma:ineq-b},
$\h(D)=\h(\Omega)=\b(D)=\b(\Omega)=0$.
It follows from Lemma \ref{lemma:ineq-bab} that $\b_\alpha(D)=0$. Since $\Omega=\C$, we also have $\b_\alpha(\Omega)=0$. 

From now on, we assume that $\Omega\subsetneq \C$.
 For $r>0$, we set $D_r=\{w\in D: |w|>r\}$ and $A_r=\{w\in D:|w|=r\}$. It follows from Definition \ref{def:classD} and the assumption  $\Omega\subsetneq \C$ that there exists $R=R(D)>0$ such that\\
(i) $F\subset \{w\in\C:|w|<R-1\}$,\\
(ii) for every $r\ge R$, $\partial D\cap\{w \in \C:|w|=r\}\neq\varnothing$,\\
(iii) for every $r\ge R$, $A_r$ is a single arc of the circle $\{w\in\C:|w|=r\}$.

In the sequel, $R$ will be this constant associated with the domain $D$. Now, we will prove several properties concerning those domains $D \in \mathcal{D}$ for which $\Omega \neq \C$.

\begin{lemma}\label{PL4}
Let $D\in \mathcal{D}$ with $\Omega \neq \C$. There exist constants $\sigma>\rho>R$ with the following properties:\\
{\rm (a)} If $a\in A_\rho$, then $\omega(a,A_R,D_R)<\frac{1}{4}$.\\
{\rm (b)} If $\zeta\in A_\rho$ and $w\in A_\sigma$, then $\rho_\Omega(\zeta,w)>\frac{\log 2}{2}$.
\end{lemma}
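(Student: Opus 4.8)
The plan is to treat the two assertions separately: part~(a) asks for \emph{decay} of the harmonic measure of an inner boundary arc as the observation point recedes to infinity, while part~(b) asks for \emph{growth} of the hyperbolic distance between two far-out level arcs. I would fix $\rho$ first using (a), and then choose $\sigma>\rho$ using (b); the two choices are essentially independent. Throughout I use the structural facts recorded before the lemma: since $F\subset\{|w|<R-1\}$, for every $r\ge R$ one has $D\cap\{|w|=r\}=\Omega\cap\{|w|=r\}=A_r$, so that $D_R=\Omega\cap\{|w|>R\}$; moreover each $A_r$ is a single circular arc, hence of angular width at most $2\pi$, and by property~(ii) there is a point of $\partial\Omega$ on every circle $\{|w|=r\}$, $r\ge R$.

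Part~(b) is the routine half. Because $\Omega$ is simply connected, the Koebe one-quarter theorem gives the lower bound $\lambda_\Omega(z)\ge\frac{1}{4\,\mathrm{dist}(z,\partial\Omega)}$ for the hyperbolic density $\lambda_\Omega$ (normalised so that $\lambda_\D(z)=(1-|z|^2)^{-1}$, matching the paper's $\rho_\D$). For $z\in\Omega$ with $|z|=r\ge R$ there is a boundary point of $\Omega$ on the same circle, whence $\mathrm{dist}(z,\partial\Omega)\le 2r=2|z|$ and therefore $\lambda_\Omega(z)\ge\frac{1}{8|z|}$. Now let $\zeta\in A_\rho$, $w\in A_\sigma$ and let $\gamma\colon[0,1]\to\Omega$ be any path from $\zeta$ to $w$; writing $f(t)=|\gamma(t)|$ and $t_0=\sup\{t:f(t)\le\rho\}$, the piece of $\gamma$ on $[t_0,1]$ stays in $\{|z|\ge\rho\}\subset\{|z|\ge R\}$, so its hyperbolic length is at least
\[
\frac{1}{8}\int_{t_0}^{1}\frac{|f'(t)|}{f(t)}\,dt\ \ge\ \frac{1}{8}(\log\sigma-\log\rho).
\]
Taking the infimum over $\gamma$ gives $\rho_\Omega(\zeta,w)\ge\frac18\log(\sigma/\rho)$, and any $\sigma>16\rho$ makes the right-hand side exceed $\frac18\log 16=\frac{\log 2}{2}$, proving~(b).

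Part~(a) is where the real work lies, and I expect it to be the main obstacle. The mechanism is that $A_R$, viewed from radius $\rho$, has small harmonic measure because $\C\setminus\Omega$ meets every circle $\{|w|=r\}$ with $r\ge R$, so the region joining $A_R$ to $A_\rho$ is a channel of bounded angular width. I would make this quantitative by extremal length: on $D_R\cap\{R<|w|<\rho\}$ put the conformal metric $\mu(w)=\frac{1}{2\pi|w|}|dw|$. Every curve joining $A_R$ to $A_\rho$ has $\mu$-length at least $\frac{1}{2\pi}\log(\rho/R)$, whereas, since each cross-section $A_r$ has angular width at most $2\pi$, the $\mu$-area of the region is
\[
\int_R^\rho\frac{(\text{angular width of }A_r)}{4\pi^2 r}\,dr\ \le\ \frac{1}{2\pi}\log(\rho/R).
\]
Hence the extremal distance between $A_R$ and $A_\rho$ in $D_R$ is at least $\frac{1}{2\pi}\log(\rho/R)$, and a standard Beurling-type inequality $\omega\le C\exp(-\pi\cdot(\text{extremal distance}))$ yields $\omega(a,A_R,D_R)\le C\,(R/\rho)^{1/2}$ for all $a\in A_\rho$; a sufficiently large $\rho$ then makes this $<\frac14$.

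The crux is thus the two ingredients of the preceding paragraph: the \emph{area estimate}, whose entire force comes from the single-arc (angular width $\le 2\pi$) property of the cross-sections of $\Omega$ for $r\ge R$; and the passage from a lower bound on extremal distance to an upper bound on harmonic measure. I would establish the latter by the classical extremal-length/harmonic-measure inequality, or equivalently by mapping $D_R$ to a strip of width $\le 2\pi$ in logarithmic coordinates and invoking the exponential decay of the harmonic measure of one end, or by a Beurling circular-projection comparison with the radial-slit model $\C\setminus(-\infty,-R]$, for which the measure is computed explicitly and is $\asymp(R/\rho)^{1/2}$ (consistent with $\h(\C\setminus(-\infty,-1])=\tfrac12$). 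A pleasant feature of the length-area argument is that it is coordinate-free, so it survives even when $\Omega$ spirals about the origin (so that $\log$ is multivalued on $D_R$), which is precisely the case that would break a naive straight-strip comparison.
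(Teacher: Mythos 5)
Your two halves have different statuses, so let me separate them. Part (b) is correct and complete, and it is more elementary than the paper's argument: the paper invokes Weitsman's symmetrization theorem to compare $\rho_\Omega(\zeta,w)$ with the hyperbolic distance in $\C\setminus(-\infty,-R]$, whereas you get the same conclusion from the Koebe lower bound $\lambda_\Omega(z)\ge\frac{1}{4\,\mathrm{dist}(z,\partial\Omega)}$ together with the observation that $\partial\Omega$ meets every circle $\{|w|=r\}$, $r\ge R$, so that $\lambda_\Omega(z)\ge\frac{1}{8|z|}$ there; integrating radially gives $\rho_\Omega(\zeta,w)\ge\frac18\log(\sigma/\rho)$ and $\sigma>16\rho$ suffices. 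That is a perfectly good, self-contained alternative.

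Part (a) is where the gap is, and you correctly identified it as the crux. Your length--area computation is right: with the metric $|dw|/(2\pi|w|)$ on $\Omega\cap\{R<|w|<\rho\}$, the single-arc property gives extremal distance at least $\frac{1}{2\pi}\log(\rho/R)$ between $A_R$ and $A_\rho$. But the step ``a standard Beurling-type inequality yields $\omega(a,A_R,D_R)\le C(R/\rho)^{1/2}$ for all $a\in A_\rho$'' does not follow verbatim from the textbook estimate: the standard inequality $\omega\le\frac{8}{\pi}e^{-\pi\lambda}$ bounds the harmonic measure at a point \emph{separated from $E$ by a crosscut}, with $\lambda$ the extremal distance from that crosscut to $E$, whereas here your evaluation point $a$ lies \emph{on} the outer crosscut $A_\rho$ and the harmonic measure is taken in the full unbounded domain $D_R$, which continues past $A_\rho$ (so the quadrilateral harmonic measure of $A_R$ vanishes at $a$ and gives no information, and the Markov property relating the two reintroduces the unknown quantity on $A_\rho$). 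The repair is genuine work: map the quadrilateral $\Omega\cap\{R<|w|<\rho\}$ to a rectangle of modulus $\lambda$, bound $\omega(\cdot,A_R,D_R)$ on the preimage of the middle vertical segment by $Ce^{-\pi\lambda/2}$ (using that this function is $0$ n.e.\ on $\partial D_R$ off $A_R$ and $\le 1$ on $A_\rho$), and then push the bound out to $a\in A_\rho$ by the maximum principle in the remaining component of $D_R$, treating $\infty$ as a polar boundary point. This is all doable and yields $\omega(a,A_R,D_R)\le C(R/\rho)^{1/4}$, which is enough, but it is the actual content of part (a) and is missing from your write-up. For contrast, the paper disposes of (a) in two lines by Baernstein's circular symmetrization theorem, which bounds $\omega(a,A_R,D_R)$ by $\omega(|a|,\{|z|=R\},\C\setminus(\{|z|\le R\}\cup(-\infty,-R)))$, an explicitly computable quantity tending to $0$; your third suggested alternative (comparison with the radial-slit model) is essentially this, and I would recommend it over the extremal-length route.
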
	
\begin{proof}
(a) By a classical symmetrization theorem of Baernstein \cite[Theorem 9.5]{Hay}, for every $r>R$ and every $a\in A_r$, 
$$
\omega(a,A_R,D_R)\leq \omega(r, \{z:|z|=R\}, \Omega^*),
$$
where $\Omega^*=\C\setminus (\{z:|z|\leq R\}\cup (-\infty,-R))$. Since
$$
\lim_{r\to +\infty}\omega(r, \{z:|z|=R\}, \Omega^*)=0,
$$
we can find $\rho>R$ large enough so that for $a\in A_\rho$, $\omega(a,A_R,D_R)<\frac{1}{4}$.

\medskip

(b) Let $\rho$ be the constant we found in (a). By a classical symmetrization theorem of Weitsman \cite[Theorem 9.16]{Hay}, if $s>\rho$, $\zeta\in A_\rho$ and $w\in A_s$, then $\rho_\Omega(\zeta,w)\geq \rho_{\tilde{\Omega}}(\rho,s)$, where
$\tilde{\Omega}=\C\setminus (-\infty,-R]$. Since $$\lim_{s\to +\infty}\rho_{\tilde{\Omega}}(\rho,s)=+\infty,$$ we can find $\sigma>\rho$ large enough so that 
$$
\rho_\Omega(\zeta,w)>\frac{\log 2}{2},\;\;\;\zeta\in A_\rho,\;w\in A_\sigma.
$$
\end{proof}

In the sequel, $\sigma,\rho$ will be these constants associated with the domain $D$.

\begin{lemma}\label{PL5}
Let $D\in \mathcal{D}$ with $\Omega \neq \C$. There exists a closed arc $K\subset A_{\rho}$ with the following property: If $\zeta,w\in \Omega$, $|\zeta|\leq R$, $|w|\geq \sigma$, and $\gamma$ is the hyperbolic geodesic arc for $\Omega$ connecting $\zeta$ and $w$, then $\gamma\cap A_{\rho}\subset K$. 	
\end{lemma}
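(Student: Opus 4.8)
The plan is to pass to the unit disk via a Riemann map and reduce the statement to an elementary fact about geodesics in $\D$. Since $\Omega$ is simply connected with $\Omega\subsetneq\C$, fix a conformal map $\Phi\colon\D\to\Omega$. By conformal invariance of the hyperbolic metric, the geodesic $\gamma=\gamma_\Omega(\zeta,w)$ is the $\Phi$-image of the unique hyperbolic geodesic of $\D$ joining $u:=\Phi^{-1}(\zeta)$ and $v:=\Phi^{-1}(w)$, and this $\D$-geodesic is a circular arc orthogonal to $\partial\D$. For each $r\ge R$ the arc $A_r=\Omega\cap\{|z|=r\}$ is a single crosscut of $\Omega$, so $\tilde A_r:=\Phi^{-1}(A_r)$ is a crosscut of $\D$; let $\xi_1,\xi_2\in\partial\D$ be the two endpoints of $\tilde A_\rho$. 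Since $\Phi$ restricts to a homeomorphism of $\tilde A_\rho$ onto $A_\rho$, it suffices to produce a compact subarc $\tilde K$ of the open crosscut $\tilde A_\rho$ such that every geodesic joining a point of $E_{in}:=\Phi^{-1}(\{z\in\Omega:|z|\le R\})$ to a point of $E_{out}:=\Phi^{-1}(\{z\in\Omega:|z|\ge\sigma\})$ meets $\tilde A_\rho$ only inside $\tilde K$; then $K:=\Phi(\tilde K)$ is the required arc.

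The heart of the argument is to separate $E_{in}$ and $E_{out}$ from the two endpoints $\xi_1,\xi_2$. Here the single-arc structure of the class $\mathcal D$ is essential: for $R\le r_1<r_2$ the crosscuts $\tilde A_{r_1},\tilde A_{r_2}$ are disjoint and nested, with $\tilde A_{r_2}$ lying in the component of $\D\setminus\tilde A_{r_1}$ that carries the end of $\Omega$ at infinity. Choosing radii $R<\rho''<\rho<\rho'<\sigma$, the crosscut $\tilde A_{\rho''}$ separates $E_{in}$ (which lies in $\Phi^{-1}(\{|z|<\rho''\})$) from $\xi_1,\xi_2$, while $\tilde A_{\rho'}$ separates $E_{out}$ (which lies in $\Phi^{-1}(\{|z|>\rho'\})$) from $\xi_1,\xi_2$. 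Consequently the closures of $E_{in}$ and $E_{out}$ in $\overline\D$ avoid $\xi_1$ and $\xi_2$, so there is $\tau>0$ with $(E_{in}\cup E_{out})\cap B(\xi_i,\tau)=\varnothing$ for $i=1,2$, where $B(p,t)$ denotes the Euclidean disk of center $p$ and radius $t$.

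Finally I would invoke the elementary fact that a hyperbolic geodesic of $\D$ approaches $\partial\D$ only near its two endpoints: for every $\tau>0$ there is $\tau'\in(0,\tau)$ such that any geodesic arc of $\D$ both of whose endpoints lie outside $B(\xi_i,\tau)$ stays outside $B(\xi_i,\tau')$, since such a geodesic is a subarc of a circle orthogonal to $\partial\D$, and such a circle meets a sufficiently small circle about $\xi_i$ only in a neighborhood of its ideal endpoints. Applying this with the $\tau$ from the previous step, every geodesic from $E_{in}$ to $E_{out}$ avoids $B(\xi_1,\tau')\cup B(\xi_2,\tau')$; hence all of its intersections with $\tilde A_\rho$ lie in the compact subarc $\tilde K:=\tilde A_\rho\setminus(B(\xi_1,\tau')\cup B(\xi_2,\tau'))$, and pushing forward by $\Phi$ yields $K$ with $\gamma\cap A_\rho\subset K$.

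I expect the main obstacle to be the separation step, that is, verifying rigorously that $\overline{E_{in}}$ and $\overline{E_{out}}$ stay away from the crosscut endpoints $\xi_1,\xi_2$; this is exactly where the hypothesis that $A_r$ is a single arc for all large $r$ (so that the $\tilde A_r$ form nested crosscuts approaching a single end) must be used, and it requires some care with the prime-end behaviour of $\Phi$ near $\xi_1,\xi_2$, since a priori two crosscuts $\tilde A_{r_1},\tilde A_{r_2}$ could share a boundary endpoint. The remaining ingredients, namely the conformal transfer of geodesics and the boundary-approach property of disk geodesics, are routine.
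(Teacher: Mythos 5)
Your overall strategy is viable, and its mechanics genuinely differ from the paper's: you separate the closures of $E_{in}$ and $E_{out}$ from the endpoints $\xi_1,\xi_2$ of the crosscut $\tilde A_\rho$ using two auxiliary crosscuts $\tilde A_{\rho''},\tilde A_{\rho'}$, and then invoke a quantitative fact about disk geodesics staying away from a boundary point when both of their endpoints do. The paper instead argues by contradiction with sequences $\zeta_n,w_n$ and points $a_n\in\gamma_n\cap A_\rho$ tending to $\partial\Omega$, transfers to the disk, and uses Hausdorff convergence of the geodesic arcs $\tilde\gamma_n$ to a limiting circular arc to force the limit point $\tilde a_\infty$ to be simultaneously an endpoint of $\tilde A_\rho$ and of $\tilde A_R$ or $\tilde A_\sigma$. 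Both routes ultimately rest on the same classical ingredient: the preimages under the Riemann map of the circular crosscuts $A_r$ are crosscuts of $\D$ with pairwise distinct endpoints on $\partial\D$ (the paper cites \cite[Proposition 3.3.3]{BCD} for the six endpoints of $\tilde A_R,\tilde A_\rho,\tilde A_\sigma$).

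As written, however, your argument has a genuine gap at exactly the step you flag as the ``main obstacle'': you need the endpoints of $\tilde A_{\rho''}$ and $\tilde A_{\rho'}$ to be distinct from $\xi_1,\xi_2$, and you only remark that a priori two crosscuts could share an endpoint, without ruling this out. To close it, either cite the same classical result for your triple of radii, or argue directly: as one moves along $\tilde A_r$ toward one of its endpoints $\xi\in\partial\D$, the image under $\Phi$ tends to an endpoint of the arc $A_r$, a boundary point of modulus $r$; by Lindel\"of's theorem the angular limit of $\Phi$ at $\xi$ then exists and has modulus $r$, so crosscuts $\tilde A_{r_1}$ and $\tilde A_{r_2}$ with $r_1\neq r_2$ cannot share an endpoint. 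With that in place the remaining steps (nestedness of the crosscuts, the geodesic boundary-avoidance estimate, and the push-forward $K=\Phi(\tilde K)$) are sound; one small repair is that $\tilde A_\rho\setminus(B(\xi_1,\tau')\cup B(\xi_2,\tau'))$ need not itself be an arc, so you should take the closed subarc of $\tilde A_\rho$ that it spans, which is still compactly contained in the open crosscut.
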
	
\begin{proof}
First we note that every hyperbolic geodesic arc $\gamma$ for $\Omega$ connecting the points $\zeta,w\in \Omega$, $|\zeta|\leq R$, $|w|\geq \sigma$, has a closed subarc in $\{z\in\C:R\leq |z|\leq \sigma\}\cap \Omega$ connecting two points $\zeta_1\in A_R$, $w_1\in A_{\sigma}$. 

Next, suppose that the conclusion of the lemma is not true. Then we can find sequences $(\zeta_n)$, $(w_n)$ in $\Omega$ with the properties:\\
(a) $|\zeta_n|=R$, $|w_n|=\sigma$,\\
(b) If $\gamma_n$ is the hyperbolic geodesic arc for $\Omega$ connecting $\zeta_n$ with $w_n$, $n=1,2,\dots$, then there exists $a_n\in \gamma_n\cap A_{\rho}$ with $a_n\to a_\infty\in\partial \Omega$.

Let $f$ be a Riemann map of $\D$ onto $\Omega$. Let $\tilde{\gamma}_n$, $\tilde{\zeta}_n$, $\tilde{w}_n$, $\tilde{a}_n$, $\tilde{A}_R$, $\tilde{A}_{\rho}$, $\tilde{A}_{\sigma}$ be the images of  $\gamma_n$, $\zeta_n$, $w_n$, $a_n$, $A_R$, $A_{\rho}$, $A_{\sigma}$, respectively, under $f^{-1}$. 
By a classical theorem for conformal maps (see e.g. \cite[Proposition 3.3.3]{BCD}), the curves  $A_R$, $A_{\rho}$, $A_{\sigma}$ are crosscuts of $\D$ having six distinct endpoints on $\partial \D$. By passing to subsequences, we may assume that
\begin{equation}
\label{PL5p1}
\tilde{a}_n\to \tilde{a}_\infty\in\partial \D,\;\;\tilde{\zeta}_n\to\tilde{\zeta}_\infty \in \overline{\D},\;\;\tilde{w}_n\to\tilde{w}_\infty \in \overline{\D},\;\;\hbox{as}\;\;n\to\infty.
\end{equation}

The curves $\tilde{\gamma}_n$ are arcs of circles orthogonal to $\partial\D$ and they join the points $\tilde{\zeta}_n$, $\tilde{w}_n$. It follows that 
$\tilde{\gamma}_n\to \tilde{\gamma}_\infty$ (in the Hausdorff metric), where 
$\tilde{\gamma}_\infty$ is an arc of a circle orthogonal to $\partial\D$ joining the points $\tilde{\zeta}_\infty$, $\tilde{w}_\infty$. Since 
$\tilde{a}_n\to \tilde{a}_\infty$ and $\tilde{a}_n\in \tilde{\gamma}_n$, we have $\tilde{a}_\infty\in \tilde{\gamma}_\infty\cap \partial \D$. Therefore, either  $\tilde{\zeta}_\infty\in\partial \D$ and $\tilde{a}_\infty=\tilde{\zeta}_\infty$ or $\tilde{w}_\infty\in\partial \D$ and $\tilde{a}_\infty=\tilde{w}_\infty$. It follows that in the first case $\tilde{\zeta}_\infty$ is an endpoint of $\tilde{A}_R$ and in the second case that $\tilde{w}_\infty$ is an endpoint of $\tilde{A}_{\sigma}$. Noting that 
$\tilde{a}_\infty$ is an endpoint of $\tilde{A}_{\rho}$ and recalling that all these endpoints are distinct, we arrive at a contradiction. 
\end{proof}

\begin{lemma}\label{PL6}
Let $D\in \mathcal{D}$ with $\Omega \neq \C$. Let $a$ be the midpoint of the arc $A_{\rho}$. There exists a constant $C=C(D)$ such that for every $w\in D_{\sigma}$, $g_\Omega(a,w)\leq C\,g_{D_R}(a,w)$.		
\end{lemma}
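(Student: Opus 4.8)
The plan is to avoid estimating the two Green functions pointwise and instead compare them across the single arc $A_\sigma$, then propagate the comparison into the outer region $D_\sigma$ by the maximum principle. The decisive feature is that the pole $a$ (the midpoint of $A_\rho$, so $|a|=\rho$) satisfies $|a|=\rho<\sigma$, whence $a$ lies at distance at least $\sigma-\rho$ from $\overline{D_\sigma}$. Consequently both $u:=g_\Omega(a,\cdot)$ and $v:=g_{D_R}(a,\cdot)$ are positive, harmonic, and \emph{bounded above} on $D_\sigma$, since a Green function is bounded outside any neighbourhood of its pole. Moreover $D_\sigma\subset D_R\subset\Omega$, and for $|z|\ge R$ the sets $D$ and $\Omega$ share the same boundary (the bounded complementary components $F$ lie in $\{|z|<R-1\}$); hence $\partial D_\sigma$ splits into the compact arc $A_\sigma\subset D_R$, the piece $\partial D\cap\{|z|\ge\sigma\}\subset\partial\Omega\cap\partial D_R$, and the point at infinity. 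On the middle piece both $u$ and $v$ tend to $0$ nearly everywhere, because Green functions vanish n.e.\ on the boundary of their domain.

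First I would reduce the claim to the arc $A_\sigma$. Since $A_\sigma$ is a compact subset of the open set $D_R$, the function $v$ is continuous and strictly positive there, while $u$ is continuous; therefore
$$C:=\max_{\xi\in A_\sigma}\frac{g_\Omega(a,\xi)}{g_{D_R}(a,\xi)}<+\infty,$$
and $C$ depends only on the fixed data $a,A_\sigma,\Omega,D_R$, i.e.\ only on $D$. Second, I would push this bound inward: the function $h:=u-Cv$ is harmonic on $D_\sigma$, bounded above (as $h\le u$), satisfies $h\le 0$ on $A_\sigma$ by the choice of $C$, and has $\limsup_{w\to\xi}h(w)\le 0$ for nearly every finite $\xi\in\partial D_\sigma\setminus A_\sigma$ because both $u$ and $v$ vanish there. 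Since $\{\infty\}$ is a single point, hence polar, it may be absorbed into the exceptional set, so no separate verification at infinity is needed. By the generalized maximum principle for Greenian domains (a subharmonic function bounded above whose boundary $\limsup$ is $\le 0$ n.e.\ is $\le 0$), we conclude $h\le 0$ on $D_\sigma$, that is $g_\Omega(a,w)\le C\,g_{D_R}(a,w)$ for all $w\in D_\sigma$, as required.

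The main obstacle is precisely the correct bookkeeping for the maximum principle on the unbounded domain $D_\sigma$: one must confirm that $u,v$ are bounded above (clear, since the common pole $a$ is bounded away from $\overline{D_\sigma}$), that $\partial D_\sigma\setminus A_\sigma$ is indeed contained in $\partial\Omega\cap\partial D_R$ so that both Green functions vanish n.e.\ there, and that the polar exceptional set (including $\infty$) is harmless. A more computational alternative, which would genuinely use the preparatory lemmas, inserts $D_R$ into $\Omega$ via the Strong Markov Property \eqref{smp},
$$g_\Omega(a,w)=g_{D_R}(a,w)+\int_{A_R}g_\Omega(\zeta,w)\,\omega(a,d\zeta,D_R),$$
then bounds the integrand through $g_\Omega(\zeta,w)\le 4e^{-2\rho_\Omega(\zeta,w)}$ (Lemma \ref{PL1}(b)), uses that the $\Omega$-geodesic from $\zeta\in A_R$ to $w\in D_\sigma$ meets $A_\rho$ inside the compact arc $K$ (Lemma \ref{PL5}) to replace $\rho_\Omega(\zeta,w)$ by $\rho_\Omega(a,w)$ up to an additive constant, and controls the total mass via $\omega(a,A_R,D_R)<\tfrac14$ (Lemma \ref{PL4}(a)). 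However, this route still needs the lower bound $g_{D_R}(a,w)\gtrsim e^{-2\rho_\Omega(a,w)}$, which is exactly a boundary Harnack statement and is delivered most transparently by the maximum-principle argument above; for this reason I would take the latter as the backbone of the proof.
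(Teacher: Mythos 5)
Your main argument has a genuine gap at its very first step: the constant $C:=\max_{\xi\in A_\sigma} g_\Omega(a,\xi)/g_{D_R}(a,\xi)$ need not be finite, because $A_\sigma=D\cap\{|w|=\sigma\}$ is \emph{not} a compact subset of $D_R$. By Definition \ref{def:classD}(c) and property (ii) of the constant $R$, $A_\sigma$ is a relatively open proper subarc of the circle $\{|w|=\sigma\}$ whose two endpoints lie on $\partial D=\partial\Omega\subset\partial D_R$ (recall $\sigma>R$, so $F$ plays no role there). Near those endpoints both $g_\Omega(a,\cdot)$ and $g_{D_R}(a,\cdot)$ tend to $0$, so the supremum of their ratio over $A_\sigma$ is a $0/0$ comparison of two positive harmonic functions vanishing on a common boundary portion --- precisely a boundary Harnack statement, which is not automatic for a general domain in $\mathcal{D}$ and is in fact the whole content of the lemma. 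Once that constant is granted, your maximum-principle propagation into $D_\sigma$ is sound (both functions are bounded there since $|a|=\rho<\sigma$, they vanish n.e.\ on $\partial D_\sigma\setminus A_\sigma$, and $\infty$ is polar), but as it stands the argument is circular: you defer the hard estimate to a boundary Harnack principle that you then claim is ``delivered by the maximum-principle argument.''

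The paper's proof is essentially your ``computational alternative,'' and your objection to it --- that it still needs a lower bound $g_{D_R}(a,w)\gtrsim e^{-2\rho_\Omega(a,w)}$ --- is not correct; no such bound is used. The missing ingredient is an absorption trick. With $\zeta_w\in A_R$ maximizing $g_\Omega(\cdot,w)$ on $A_R$ and $a_w$ a point of $A_\rho$ on the $\Omega$-geodesic from $\zeta_w$ to $w$, the Strong Markov Property \eqref{smp} gives $g_\Omega(a_w,w)\le g_{D_R}(a_w,w)+\omega_o\,g_\Omega(\zeta_w,w)$, where $\omega_o=\max_{A_\rho}\omega(\cdot,A_R,D_R)<\tfrac14$ by Lemma \ref{PL4}(a). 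Then Lemma \ref{PL1}(b),(c) together with $\rho_\Omega(a_w,w)\le\rho_\Omega(\zeta_w,w)$ (valid because $a_w$ lies on the geodesic) yield $g_\Omega(\zeta_w,w)\le 4e^{-2\rho_\Omega(a_w,w)}\le 4\,g_\Omega(a_w,w)$, so the error term $4\omega_o\,g_\Omega(a_w,w)$ is absorbed into the left-hand side because $4\omega_o<1$ --- this is exactly why Lemma \ref{PL4} insists on the value $\tfrac14$. Lemma \ref{PL5} and Harnack's inequality on the compact arc $K$ then transfer the resulting bound from the variable point $a_w$ to the fixed point $a$. If you wish to keep your two-function comparison as the backbone, you must supply an actual proof of the boundary Harnack inequality at the endpoints of $A_\sigma$; the absorption argument above is the paper's substitute for it.
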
	
\begin{proof}
Let $w\in D_{\sigma}$. Let $\zeta_w\in A_R$ be such that
\begin{equation}\label{PL6p1}
g_\Omega(\zeta_w,w)=\max_{\zeta\in A_R}g_\Omega(\zeta,w).
\end{equation}
Let $a_o$ be a point on $A_{\rho}$ such that
\begin{equation}\label{PL6p2}
\omega_o:=\omega(a_o,A_R,D_R)=\max_{z\in A_{\rho}}\omega(z,A_R,D_R).
\end{equation}
Note that $\omega_o$ is a constant depending only on $D$ and that $\omega_o<1/4$, by Lemma \ref{PL4}. 
Consider the hyperbolic geodesic arc $\gamma_w$ for $\Omega$ joining $\zeta_w,w$. Let $a_w$ be a point on $A_{\rho}\cap \gamma_w$. By the Strong Markov Property \eqref{smp}, (\ref{PL6p1}), and  (\ref{PL6p2}),
\begin{eqnarray}\label{PL6p3}
g_\Omega(a_w,w)&=&g_{D_R}(a_w,w)+\int_{A_R}g_\Omega(\zeta,w)\omega(a_w,d\zeta, D_R)\\
&\leq &
g_{D_R}(a_w,w)+\omega_o\;g_\Omega(\zeta_w,w).\nonumber
\end{eqnarray}
Because of Lemma \ref{PL1},
\begin{eqnarray}
\label{PL6p4}
g_\Omega(\zeta_w,w)&\leq & 4e^{-2\rho_\Omega(\zeta_w,w)} \leq 4 e^{-2\rho_\Omega(a_w,w)} \\ &\leq & 4g_\Omega(a_w,w).\nonumber
\end{eqnarray}
Hence (\ref{PL6p3}) gives
\begin{equation}
\label{PL6p5}
g_\Omega(a_w,w)\leq C_1\,g_{D_R}(a_w,w), \;\;\;C_1=C_1(D).
\end{equation}
By Lemma \ref{PL5}, the point $a_w$ lies on a compact set $K=K(D)\subset A_{\rho}$. Therefore, by Harnack's theorem, there exist constants depending only on $D$ so that
\begin{eqnarray}
\label{PL6p6}
g_\Omega(a,w)\leq C_2 g_\Omega(a_w,w)\leq C_3 g_{D_R}(a_w,w)\leq C_4 g_{D_R}(a,w).
\end{eqnarray}
\end{proof}

\begin{lemma}\label{PL7}
Let $D\in \mathcal{D}$ with $\Omega \neq \C$ such that $0\in D$. There exists a constant $C=C(D)$ such that for every $w\in D_{\sigma}$, $g_D(0,w)\leq C\, e^{-2\rho_D(0,w)}$. 		
\end{lemma}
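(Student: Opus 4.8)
The plan is to reduce the desired estimate, by a change of base point and domain monotonicity, to a comparison between the hyperbolic distances $\rho_\Omega(a,w)$ and $\rho_D(0,w)$, where $a$ is the midpoint of the arc $A_\rho$ appearing in Lemma \ref{PL6}. First I would fix a compact connected set $L\subset D$ joining $0$ and $a$; since $g_D(\cdot,w)$ is positive and harmonic on $D\setminus\{w\}$, Harnack's inequality on a neighbourhood of $L$ produces a constant $C_H=C_H(D)$ with $g_D(0,w)\le C_H\,g_D(a,w)$ for all $w\in D_\sigma$. Here one only needs $w$ to stay off $L$, which holds once $|w|$ exceeds the maximal modulus of $L$; enlarging $\sigma$ if necessary (Lemmas \ref{PL4}, \ref{PL5}, \ref{PL6} all remain valid when $\sigma$ is increased), this is automatic for $w\in D_\sigma$. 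Domain monotonicity ($D\subset\Omega$) gives $g_D(a,w)\le g_\Omega(a,w)$, and since $\Omega$ is simply connected with $\rho_\Omega(a,w)>\tfrac{\log 2}{2}$ for $w\in D_\sigma$ (the symmetrization argument of Lemma \ref{PL4}(b) extends from $A_\sigma$ to all of $D_\sigma$ by monotonicity in the radius), Lemma \ref{PL1}(b) yields $g_\Omega(a,w)\le 4\,e^{-2\rho_\Omega(a,w)}$. Chaining these,
$$g_D(0,w)\le 4C_H\,e^{-2\rho_\Omega(a,w)},$$
so it remains to show $\rho_D(0,w)\le \rho_\Omega(a,w)+O(1)$.

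The heart of the matter is the comparison $\rho_{D_R}(a,w)\le \rho_\Omega(a,w)+C_1$. The key structural fact is that $D_R=\Omega_R$, because $F\subset\{|z|<R-1\}$ contains no point of modulus $\ge R$, and that by property (iii) of the constant $R$ every circle $\{|z|=r\}$, $r\ge R$, meets $\Omega_R$ in a single arc; hence $\log$ maps $\Omega_R$ univalently onto a region meeting each vertical line in an interval, so $D_R=\Omega_R$ is \emph{simply connected}. This is exactly what makes the two-sided estimate of Lemma \ref{PL1} available for $D_R$: since $\rho_{D_R}(a,w)\ge\rho_\Omega(a,w)>\tfrac{\log 2}{2}$, Lemma \ref{PL1}(b) gives $g_{D_R}(a,w)\le 4\,e^{-2\rho_{D_R}(a,w)}$. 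On the other hand, Lemma \ref{PL1}(c) (for the Greenian domain $\Omega$) gives $g_\Omega(a,w)\ge e^{-2\rho_\Omega(a,w)}$, and Lemma \ref{PL6} gives $g_\Omega(a,w)\le C\,g_{D_R}(a,w)$. Combining,
$$e^{-2\rho_\Omega(a,w)}\le g_\Omega(a,w)\le C\,g_{D_R}(a,w)\le 4C\,e^{-2\rho_{D_R}(a,w)},$$
which is precisely $\rho_{D_R}(a,w)\le \rho_\Omega(a,w)+\tfrac12\log(4C)=:\rho_\Omega(a,w)+C_1$.

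Finally I would assemble the pieces. Since $D_R\subset D$ we have $\rho_D(a,w)\le\rho_{D_R}(a,w)$, so with the triangle inequality and the previous step,
$$\rho_D(0,w)\le \rho_D(0,a)+\rho_D(a,w)\le \rho_D(0,a)+\rho_\Omega(a,w)+C_1.$$
Writing $c_0=\rho_D(0,a)$ (finite, as $0,a\in D$), this gives $e^{-2\rho_\Omega(a,w)}\le e^{2(c_0+C_1)}\,e^{-2\rho_D(0,w)}$, and together with the first display we obtain $g_D(0,w)\le 4C_H\,e^{2(c_0+C_1)}\,e^{-2\rho_D(0,w)}$, as required. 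The step I expect to be the main obstacle is the second paragraph: it is the simple connectivity of $D_R=\Omega_R$, forced by property (iii) of the class $\mathcal{D}$, that converts the one-sided Green-function comparison of Lemma \ref{PL6} into an upper bound on $\rho_{D_R}(a,w)$. Without it, the series defining $g_{D_R}(a,w)$ could exceed a constant multiple of its largest term $e^{-2\rho_{D_R}(a,w)}$, and the whole argument would collapse.
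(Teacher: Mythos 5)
Your argument is correct and follows essentially the same route as the paper: the paper's proof is the single chain $g_D(0,w)\le g_\Omega(0,w)\le C_1 g_\Omega(a,w)\le C_2 g_{D_R}(a,w)\le C_3 e^{-2\rho_{D_R}(a,w)}\le C_3 e^{-2\rho_D(a,w)}$ (domain monotonicity, Harnack, Lemma \ref{PL6}, Lemma \ref{PL1}, monotonicity of the hyperbolic distance) followed by the same triangle inequality, so your extra loop from $g_\Omega(a,w)$ to $e^{-2\rho_\Omega(a,w)}$ and back via Lemma \ref{PL1}(c) is logically redundant. The only quibble is your justification of the simple connectivity of $D_R$ through univalence of $\log$, which can fail for spiral-shaped $\Omega$; it is safer to observe that $\C_\infty\setminus D_R=(\C_\infty\setminus\Omega)\cup\{z:|z|\le R\}$ is connected.
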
	
\begin{proof}
Let $a$ be the midpoint of the arc $A_{\rho}$. By the domain monotonicity of the Green function, Harnack's theorem, Lemma \ref{PL6}, Lemma \ref{PL1}, and the domain monotonicity of the hyperbolic distance, there exist constants depending only on $D$ so that for every $w\in D_{\sigma}$,
\begin{eqnarray}
\label{PL7p1}
g_D(0,w)&\leq & g_\Omega(0,w)\leq C_1 g_\Omega(a,w)\leq C_2 g_{D_R}(a,w)\\ &\leq & C_3 
 e^{-2\rho_{D_R}(a,w)} \leq  C_3
 	e^{-2\rho_D(a,w)}. \nonumber
\end{eqnarray}
Using the triangle inequality $\rho_D(0,w)\leq \rho_D(0,a)+\rho_D(a,w)$, we obtain
 \begin{equation}
 \label{PL7p2}
g_D(0,w)\leq C_3 e^{2\rho_D(0,a)}	e^{-2\rho_D(0,w)}.
 \end{equation}
Since $e^{2\rho_D(0,a)}$ is a constant depending only on $D$, the proof is complete.
\end{proof}

As a result of the latter lemmas, we provide the following property.
\begin{proposition}
\label{prop:PL8}
Let $D\in \mathcal{D}$ with $\Omega \neq \C$ such that $0 \in D$. Let $f_D$ be a universal covering map of $\D$ onto $D$ with $f_D(0)=0$. For $w\in D_{\sigma}$, denote by $z_j(w), j=1,2,\dots$ the preimages of $w$ under $f_D$ with $\abs{z_1(w)}\leq \abs{z_2(w)} \leq \dots$. Then there exists a positive constant $C$ depending only on $D$ such that
$$g_D(0,w)\leq C\,g_\D(0,z_1(w)).$$
\end{proposition}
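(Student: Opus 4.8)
The plan is to combine Lemma \ref{PL7} with the lower bound for the Green function in terms of the hyperbolic distance (Lemma \ref{PL1}(c)) and the definition of the hyperbolic distance via the smallest preimage (equation \eqref{hd}). The key observation is that both the upper bound from Lemma \ref{PL7} and the desired right-hand side are governed by the same quantity, namely $e^{-2\rho_D(0,w)}$, and that $\rho_D(0,w) = \rho_\D(0,z_1(w))$ by \eqref{hd}.

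More precisely, I would first invoke Lemma \ref{PL7}: since $D \in \mathcal D$ with $\Omega \neq \C$ and $0 \in D$, there is a constant $C = C(D)$ such that for every $w \in D_\sigma$,
$$
g_D(0,w) \leq C\, e^{-2\rho_D(0,w)}.
$$
Next, by the definition of the hyperbolic distance in \eqref{hd}, we have $\rho_D(0,w) = \rho_\D(0,z_1(w))$, where $z_1(w)$ is the preimage of $w$ of smallest modulus. Hence
$$
g_D(0,w) \leq C\, e^{-2\rho_\D(0,z_1(w))}.
$$

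The final step is to bound $e^{-2\rho_\D(0,z_1(w))}$ from above by a constant multiple of $g_\D(0,z_1(w))$. For this I would use the explicit formula \eqref{PL1e1} in the disk, which gives
$$
g_\D(0,z_1(w)) = \log\frac{1+e^{-2\rho_\D(0,z_1(w))}}{1-e^{-2\rho_\D(0,z_1(w))}} \geq e^{-2\rho_\D(0,z_1(w))},
$$
the last inequality being the elementary bound $\log\frac{1+x}{1-x} \geq x$ for $0 < x < 1$ already used in the proof of Lemma \ref{PL1}(c). Combining the displays yields $g_D(0,w) \leq C\, g_\D(0,z_1(w))$, as desired. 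I do not expect any serious obstacle here: the proposition is essentially an immediate corollary of Lemma \ref{PL7}, with the only real content being the translation from the hyperbolic distance $\rho_D(0,w)$ to the disk quantity $g_\D(0,z_1(w))$ through \eqref{hd} and the elementary disk estimate. The one point that requires minor care is confirming that the smallest preimage $z_1(w)$ is indeed the one realizing the minimum in \eqref{hd}, which is exactly how $\rho_D(0,w)$ is defined.
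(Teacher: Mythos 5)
Your proposal is correct and follows exactly the paper's own argument: apply Lemma \ref{PL7}, identify $\rho_D(0,w)=\rho_\D(0,z_1(w))$ via \eqref{hd}, and conclude with the elementary bound $e^{-2\rho_\D(0,z_1(w))}\leq g_\D(0,z_1(w))$ from Lemma \ref{PL1}. No differences worth noting.
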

\begin{proof}
By Lemmas \ref{PL7} and \ref{PL1},
\begin{equation}
\label{PL8p1}
g_D(0,w)\leq C 	e^{-2\rho_D(0,w)}= C e^{-2\rho_\D(0,z_1(w))}\leq C \,g_\D(0,z_1(w)).
\end{equation}
\end{proof}


\section{Proof of Theorem \ref{thm:class-D}}
\label{sec:proof-D}
Let $D \subset \C$ be a Greenian domain with $0 \in D$, and let $f_D \colon \D \to D$ be a universal covering map with $f_D(0) = 0$. Assume that $D$ satisfies the following property: there exist $R = R(D) > 0$ and $C = C(D) > 0$ such that
\begin{equation}
\label{eq:condition-Green}
g_D(0,w) \leq C \min_{\substack{z \in \D \\ f(z) = w}}g_{\D}(0,z), \qquad \text{for all } w \in D \text{ with } \abs{w} > R.
\end{equation}
Notice that, by Proposition \ref{prop:PL8}, domains $D \in \mathcal{D}$ with $\Omega \neq \C$ satisfy \eqref{eq:condition-Green}. For these domains, the following result holds.

\begin{lemma}
	\label{lemma:Apa-integral}
	Let $D \subset \C$ be a domain satisfying \eqref{eq:condition-Green}. Let $0 < p <+\infty$ and $\alpha > -1$, be such that $f_D \in A^p_{\alpha}$. Then 
	\begin{equation}\label{Apae}
	\int_0^{+\infty}r^{p-1}\psi_D(r)^{\alpha+2}dr < +\infty.
	\end{equation}
	\begin{proof} 
    Let $R = R(D) > 0$ and $C = C(D) > 0$ be the constants in \eqref{eq:condition-Green}. Let $D_R = \{z \in \D : \abs{z} > R\}$. By Theorem \ref{thm:Apa}, arguing as in the proof of Lemma \ref{lemma:Hp-integral},
		\begin{eqnarray}
		+\infty & > &\int_{\D}\abs{f_D(z)}^{p-2}\abs{f_D'(z)}^2\log(1/\abs{z})^{\alpha+2}dA(z)\nonumber \\ & =& \int_D\abs{w}^{p-2}\left(\sum_j\log(1/\abs{z_j(w)})^{\alpha+2}\right)dA(w) \nonumber\\
		& =& \int_D\abs{w}^{p-2}\left(\sum_jg_{\D}(0,z_j(w))^{\alpha+2}\right)dA(w) \nonumber\\
		& \geq &\int_D\abs{w}^{p-2}g_{\D}(0,z_1(w))^{\alpha+2}dA(w),\nonumber \\
        & \geq &\int_{D_R}\abs{w}^{p-2}g_{\D}(0,z_1(w))^{\alpha+2}dA(w),\nonumber \\
        &\geq &  \frac{1}{C^{\alpha+2}}
\int_{D_R}\abs{w}^{p-2}g_D(0,w)^{\alpha+2}dA(w) \nonumber
\\
&=&
\frac{1}{C^{\alpha+2}}\int_R^{+\infty}r^{p-1}\int_0^{2\pi}g_D(0,re^{i\theta})^{\alpha+2}d\theta dr,\nonumber
\end{eqnarray}
where $\abs{z_1(w)} \leq \abs{z_2(w)} \leq \ldots$ are the preimages of $w$ by $f_D$.

	 Since $\alpha+2 > 1$, the function $t \mapsto t^{\alpha+2}$ is convex. Jensen's inequality \cite[Theorem 2.6.2]{Ransford} assures that for $r>0$,
		$$\left(\dfrac{\psi_D(r)}{2\pi}\right)^{\alpha+2} = \left(\dfrac{1}{2\pi}\int_0^{2\pi}g_D(0,re^{i\theta})d\theta\right)^{\alpha+2} \leq \dfrac{1}{2\pi}\int_0^{2\pi}g_D(0,re^{i\theta})^{\alpha+2}d\theta.$$
		It follows that
\begin{equation}\label{L4p10}
\int_R^{+\infty}r^{p-1}\psi_D(r)^{\alpha+2}dr < +\infty.
\end{equation}
By the definition of the Green function, there exists a constant $M = M(D,R) >0$ such that
$$
g_D(0,w)\leq \log\frac{1}{|w|}+M,\;\;\;w\in D,\;\;|w|\leq R.
$$
Hence
\begin{eqnarray}\label{L4p11}
\int_0^Rr^{p-1}\psi_D(r)^{\alpha+2}dr \leq  \int_0^R r^{p-1}(\log\frac{1}{r}+M)^{\alpha+2}dr <+\infty.
\end{eqnarray}	
All in all, (\ref{Apae}) follows from (\ref{L4p10}) and (\ref{L4p11}).
\end{proof}
\end{lemma}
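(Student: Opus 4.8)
The plan is to reproduce the change-of-variables computation from the proof of Lemma \ref{lemma:Hp-integral}, but carried out with the Bergman Littlewood--Paley formula (Theorem \ref{thm:Apa}) in place of the Hardy one. Since $f_D \in A^p_{\alpha}$, Theorem \ref{thm:Apa} gives that $\int_{\D}\abs{f_D(z)}^{p-2}\abs{f_D'(z)}^2\log(1/\abs{z})^{\alpha+2}dA(z) < +\infty$. First I would apply the non-univalent change of variables $w = f_D(z)$ together with Theorem \ref{thm:green}, exactly as in Lemma \ref{lemma:Hp-integral}: the Jacobian factor $\abs{f_D'(z)}^2$ cancels, a sum over the preimages $z_j(w)$ appears, and using $\log(1/\abs{z}) = g_{\D}(0,z)$ the integral above becomes
$$\int_D\abs{w}^{p-2}\Big(\sum_j g_{\D}(0,z_j(w))^{\alpha+2}\Big)\,dA(w).$$

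The decisive point --- and the place where hypothesis \eqref{eq:condition-Green} enters --- is the passage from this \emph{sum} of $(\alpha+2)$-th powers to a power of $g_D$ itself. Unlike the Hardy case, where $\sum_j g_{\D}(0,z_j(w)) = g_D(0,w)$ holds identically by Theorem \ref{thm:green}, here $\sum_j g_{\D}(0,z_j(w))^{\alpha+2} \neq g_D(0,w)^{\alpha+2}$ in general. To obtain a one-sided (lower) bound I would discard all but the dominant term, keeping only $j=1$ (the closest preimage, which maximizes $g_{\D}(0,\cdot)$), and restrict integration to $\{w \in D : \abs{w} > R\}$. On this region \eqref{eq:condition-Green} supplies $g_{\D}(0,z_1(w)) \geq C^{-1}g_D(0,w)$, so that, after passing to polar coordinates, the integral dominates
$$C^{-(\alpha+2)}\int_R^{+\infty}r^{p-1}\int_0^{2\pi}g_D(0,re^{i\theta})^{\alpha+2}\,d\theta\,dr.$$
I expect this to be the main obstacle, since it is exactly where the single-preimage comparison guaranteed for the class $\mathcal{D}$ (Proposition \ref{prop:PL8}) is indispensable, and where one sees why only one implication, rather than an equivalence as in Lemma \ref{lemma:Hp-integral}, can be obtained.

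Finally I would convert the inner angular integral into $\psi_D$. Because $\alpha+2 > 1$, the map $t \mapsto t^{\alpha+2}$ is convex, so Jensen's inequality yields $(\psi_D(r)/2\pi)^{\alpha+2} \leq (2\pi)^{-1}\int_0^{2\pi}g_D(0,re^{i\theta})^{\alpha+2}\,d\theta$; combining this with the previous display gives $\int_R^{+\infty}r^{p-1}\psi_D(r)^{\alpha+2}\,dr < +\infty$. It then remains to control the contribution of $(0,R)$, which is routine: the defining logarithmic singularity of the Green function furnishes a constant $M = M(D,R)$ with $g_D(0,w) \leq \log(1/\abs{w}) + M$ for $\abs{w} \leq R$, whence $\int_0^R r^{p-1}\psi_D(r)^{\alpha+2}\,dr \leq \int_0^R r^{p-1}(\log(1/r)+M)^{\alpha+2}\,dr < +\infty$. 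Adding the two pieces establishes \eqref{Apae}.
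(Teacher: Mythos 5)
Your proposal is correct and follows essentially the same route as the paper's proof: the Littlewood--Paley formula of Theorem \ref{thm:Apa}, the non-univalent change of variables with Theorem \ref{thm:green}, discarding all preimages except $z_1(w)$ and invoking \eqref{eq:condition-Green} on $\{|w|>R\}$, Jensen's inequality for $t\mapsto t^{\alpha+2}$, and the logarithmic bound near the pole to handle $(0,R)$. You also correctly identify the single-preimage comparison as the step where the hypothesis is indispensable and why only one implication survives.
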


The latter lemma assures some equalities between Hardy numbers and Bergman numbers.
\begin{lemma}
\label{lemma:equality-HB}
Let $D \subset \C$ be a Greenian domain satisfying \eqref{eq:condition-Green}. Then,
$$\h(D) = \h(f_D) = \b(D) = \b(f_D) = \dfrac{\b_{\alpha}(D)}{\alpha+2} = \dfrac{\b_{\alpha}(f_D)}{\alpha+2}, \qquad \alpha > -1.$$
\begin{proof}
	By Lemmas \ref{lemma:ineq-b}, \ref{lemma:ineq-ba}, \ref{lemma:number-univcov}, and \ref{lemma:ineq-bab}, it suffices to show that $\b(f_D) \leq \h(D)$. Notice that the inequality trivially holds if $\b(f_D) = 0$. In other case, suppose that $f_D \in A^p_{\alpha}$. By Lemma \ref{lemma:Apa-integral},
	$$\int_0^{+\infty}r^{p-1}\psi_D(r)^{\alpha+2}dr < +\infty.$$
	By Lemma \ref{lemma:decreasing}, similarly as in \eqref{eq:rq-bound}, it is possible to deduce that there exists $C > 0$ such that $r^p\psi_D(r)^{\alpha+2} \leq C$ for all $r > 0$. Then, by Theorem \ref{thm:main-formula-Hardy}, 
	$$\h(D) = \liminf_{r \to +\infty}\left(-\dfrac{\log \psi_D(r)}{\log r}\right) \geq \dfrac{p}{\alpha+2}.$$
	This yields, by definition, $\h(D) \geq \b(f_D)$.
\end{proof}
\end{lemma}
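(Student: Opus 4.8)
The plan is to prove \texttt{lemma:equality-HB} by exploiting the chain of inequalities already established in the preliminaries, which collapse to equalities once a single reverse inequality is proven. First I would record that Lemmas \ref{lemma:ineq-b}(a)--(b), \ref{lemma:ineq-ba}, \ref{lemma:number-univcov}, and \ref{lemma:ineq-bab} together give
$$\h(D) = \h(f_D) \leq \dfrac{\b_{\alpha}(D)}{\alpha+2} = \dfrac{\b_{\alpha}(f_D)}{\alpha+2} \leq \b(D) = \b(f_D),$$
where the outer equalities come from the universal-covering invariance (Lemma \ref{lemma:number-univcov}) and the inequalities from the various comparison lemmas. Thus the entire string of quantities is pinned between $\h(D)$ on the left and $\b(f_D)$ on the right, and it suffices to establish the single missing inequality $\b(f_D) \leq \h(D)$, which closes the loop.

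To prove $\b(f_D) \leq \h(D)$, I would unwind the definition of the Bergman number: if $\b(f_D) = 0$ the claim is immediate, so assume $f_D \in A^p_{\alpha}$ for some $p > 0$ and $\alpha > -1$, and aim to show $p/(\alpha+2) \leq \h(D)$, from which the conclusion follows by taking the supremum over admissible pairs $(p,\alpha)$. Since $D$ satisfies condition \eqref{eq:condition-Green} (which holds for the relevant class $\mathcal{D}$ by Proposition \ref{prop:PL8}), Lemma \ref{lemma:Apa-integral} applies directly and converts the membership $f_D \in A^p_{\alpha}$ into the integrability statement
$$\int_0^{+\infty}r^{p-1}\psi_D(r)^{\alpha+2}dr < +\infty.$$
This is the crucial transfer from the analytic (Bergman) side to the geometric (Green-function) side, and it is exactly where the hypothesis \eqref{eq:condition-Green} is consumed.

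The remaining step is to extract a pointwise decay bound from this integrability condition, using the monotonicity of $\psi_D$ established in Lemma \ref{lemma:decreasing}. I would mimic the computation in \eqref{eq:rq-bound}: because $\psi_D$ is decreasing, for every $r > 0$ one has
$$\dfrac{r^p}{p}\,\psi_D(r)^{\alpha+2} = \psi_D(r)^{\alpha+2}\int_0^r t^{p-1}dt \leq \int_0^r t^{p-1}\psi_D(t)^{\alpha+2}dt \leq \int_0^{+\infty} t^{p-1}\psi_D(t)^{\alpha+2}dt < +\infty,$$
so there is a constant $C > 0$ with $r^p \psi_D(r)^{\alpha+2} \leq C$ for all $r > 0$. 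Taking logarithms yields $-\log\psi_D(r) \geq \frac{p}{\alpha+2}\log r - \frac{1}{\alpha+2}\log C$, hence $-\log\psi_D(r)/\log r \to $ is bounded below by $p/(\alpha+2)$ in the limit, giving $\liminf_{r\to+\infty}(-\log\psi_D(r)/\log r) \geq p/(\alpha+2)$. By Theorem \ref{thm:main-formula-Hardy} the left-hand side equals $\h(D)$, so $\h(D) \geq p/(\alpha+2)$, as desired.

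I expect no serious obstacle here, since all the heavy machinery---the Littlewood--Paley identity, the covering-map Green-function formula, the monotonicity of $\psi_D$, and the geometric estimate \eqref{eq:condition-Green}---has been isolated into earlier lemmas; the only mild subtlety is keeping track of the exponent $\alpha+2$ correctly through the integral bound (note that the power $\psi_D(r)^{\alpha+2}$ is what appears, not $\psi_D(r)$), and ensuring the $\liminf$ inequality is taken in the correct direction. The real content of the theorem lives in Lemma \ref{lemma:Apa-integral} and hence in the class-$\mathcal{D}$ estimates of the previous section; this final lemma is the short assembly step that ties the Hardy and Bergman numbers together.
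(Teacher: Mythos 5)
Your proposal is correct and follows essentially the same route as the paper: reduce to the single inequality $\b(f_D)\leq \h(D)$ via the preliminary comparison lemmas, transfer $f_D\in A^p_\alpha$ to the integrability of $r^{p-1}\psi_D(r)^{\alpha+2}$ via Lemma \ref{lemma:Apa-integral}, and then use the monotonicity of $\psi_D$ exactly as in \eqref{eq:rq-bound} together with Theorem \ref{thm:main-formula-Hardy} to conclude $\h(D)\geq p/(\alpha+2)$. The only difference is that you spell out the chain of preliminary inequalities and the logarithm step more explicitly, which the paper leaves implicit.
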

\begin{remark}
Notice that \eqref{eq:condition-Green} is trivially satisfied whenever $D$ is a simply connected domain, since Green functions are conformally invariant. Therefore, the latter result is a generalization of the result for simply connected domains due to Karafyllia and Karamanlis in \cite{KK}.
\end{remark}

We are now in position to prove Theorem \ref{thm:class-D}.
\begin{proof}[Proof of Theorem \ref{thm:class-D}]
In the beginning of Section \ref{sec:class-D-lemmas} we justified that, if $\Omega = \C$, then
$$\h(D) =\b(D)=\h(\Omega)=\b(\Omega)=\dfrac{\b_{\alpha}(D)}{\alpha+2}=\dfrac{\b_{\alpha}(\Omega)}{\alpha+2} = 0.$$
If $D$ is hyperbolic, by Lemma \ref{lemma:number-univcov}, the latter equalities imply that
$$\h(f_D) = \b(f_D) = \dfrac{\b_{\alpha}(f_D)}{\alpha+2} = 0.$$

Let us now assume that $\Omega \neq \C$. Then, $D$ is a Greenian domain. Without loss of generality, we can assume that $0 \in D$. By Proposition \ref{prop:PL8}, $D$ satisfies \eqref{eq:condition-Green}. In particular, from Lemma \ref{lemma:equality-HB} we know that
$$\h(D) = \b(D) = \b(f_D) = \dfrac{\b_{\alpha}(D)}{\alpha+2} = \dfrac{\b_{\alpha}(f_D)}{\alpha+2}, \qquad \alpha > -1.$$

Our next goal is to prove the inequality
\begin{equation}\label{Tp4}
\h(D)\leq \h(\Omega).
\end{equation}
Let $0<p<+\infty$, and let $f_D \colon \D \to D$ be a universal covering map with $f_D(0) = 0$. Assume that $f_D \in H^p$. Then Lemma \ref{lemma:Hp-integral} implies that 
\begin{equation}\label{Tp5}
\int_D\abs{w}^{p-2}g_D(0,w)dA(w)< +\infty.
\end{equation}
We will use the positive constants $R, \rho,\sigma$ associated with $D$, which we have previously defined in  Section \ref{sec:class-D-lemmas}. Let $a$ be the midpoint of the arc $A_\rho$. By Harnack's inequality, there exists a constant $C_1>0$ (depending only on $D$) such that
\begin{equation}
\label{Tp6}
g_D(a,w)\leq C_1g_D(0,w),\;\;\;w\in D_\sigma.
\end{equation}
So the inclusion $D_R\subset D$ and (\ref{Tp5}) yield
\begin{equation}\label{Tp7}
\int_{D_\sigma}\abs{w}^{p-2}g_{D_R}(a,w)dA(w)< +\infty.
\end{equation}
By Lemma \ref{PL6}, there exists a constant $C_2>0$ (depending only on $D$) such that for every $w\in D_\sigma$, $g_\Omega(a,w)\leq C_2\,g_{D_R}(a,w)$. 
So (\ref{Tp7}) gives
\begin{equation}
\label{Tp8}
\int_{D_\sigma}\abs{w}^{p-2}g_{\Omega}(a,w)dA(w)< +\infty.
\end{equation}
Using again an argument we have used in the proof of Lemma \ref{lemma:Apa-integral} (for the Green function near its pole), we infer that
\begin{equation}
\label{Tp9}
\int_{\Omega}\abs{w}^{p-2}g_{\Omega}(a,w)dA(w)< +\infty.
\end{equation}
By Harnack's inequality, this implies that
\begin{equation}
\label{Tp10}
\int_{\Omega}\abs{w}^{p-2}g_{\Omega}(0,w)dA(w)< +\infty.
\end{equation}
Therefore (see Lemma  \ref{lemma:Hp-integral}), $f_\Omega\in H^p$. 
So, we have proved that $f_D\in H^p$ implies $f_\Omega\in H^p$. It follows from the definition of the Hardy number that $\h(f_D)\leq \h(f_\Omega)$. Then, by Lemma \ref{lemma:number-univcov}, $\h(D)=\h(\Omega)$  and (\ref{Tp4}) has been proved. 

Note that, since $\Omega$ is simply connected, we have $\h(\Omega)=\b(\Omega)$.
Hence, we obtain
\begin{equation}
\label{Tp11}
\b(\Omega)\leq \b(D)=\h(D)\leq \h(\Omega)=\b(\Omega).
\end{equation}
The proof is completed by using (\ref{Tp11}) and Lemmas \ref{lemma:ineq-ba} and \ref{lemma:ineq-bab}.
\end{proof}


\section{Open problems}
\label{sec:open-problems}
A basic open problem concerning the Hardy number and the Bergman number is the following:
\begin{problem}
Find geometric-topological conditions on a domain $D\subset \C$
that are necessary and sufficient for the equality $\b(D)=\h(D)$. 
\end{problem}

Proposition \ref{prop:PL8} states that domains $D \in 
 \mathcal{D}$ with $\Omega \neq \C$ satisfy the condition \ref{eq:condition-Green}. This turns out to be fundamental in the proof of Theorem \ref{thm:class-D}. For this reason, we state the following:
\begin{problem}
Find geometric-topological conditions on a Greenian domain $D\subset \C$ that are necessary and sufficient for the condition \eqref{eq:condition-Green}, or for the equivalent inequality
\begin{equation}
g_D(z,w)\leq C(D)\,e^{-2\rho_D(z,w)},\;\;\;\;\;\;z,w\in D,\;\rho_D(z,w)\geq 1.
\end{equation} 
\end{problem}

The examples of domains we studied suggest that the Hardy and Bergman numbers depend on the geometry of the domain near infinity. Here is a related problem:
\begin{problem}
Let  $D_1, D_2 \subset \C$ be domains. Suppose that there exists $R>0$ such that
$$D_1\cap \{z:|z|>R\} = D_2\cap \{z:|z|>R\}.$$
Is it true that $\h(D_1)=\h(D_2)$? Is it true that $\b(D_1)=\b(D_2)$? For $\alpha > -1$, is it true that $\b_{\alpha}(D_1)=\b_{\alpha}(D_2)$?
\end{problem}

To end, we ask for a construction of a domain whose Bergman number is finite and different from its Hardy number.
\begin{problem}
Is there a domain $D \subset \C$ such that $0 \leq \h(D)<\b(D)<+\infty$?	
\end{problem}



\begin{thebibliography}{99}

\bibitem{ACP-Bloch}
J. M. Anderson, J. Clunie, and Ch. Pommerenke.
\newblock {\em On Bloch functions and normal functions.}
\newblock {J. Reine Angew. Math.} {\bf 270} (1974), 12--37.
MR0361090
	
	\bibitem{Arevalo}
	I. Ar\'evalo.
	\newblock {A characterization of the inclusions between mixed norm spaces.}
	\newblock {J. Math. Anal. Appl.}, {\bf 429} (2015), 942--955.
	\newblock {See also its corrigendum in {J. Math. Anal. Appl.}, {\bf 433} (2016), 1904--1905.}
	MR3342500

	

	
	\bibitem{BKK}
	D. Betsakos, C. Karafyllia, and N. Karamanlis.
	\newblock {\em Hyperbolic metric and membership of conformal maps in the Bergman space.}
	\newblock {Canad. Math. Bull.}, {\bf 64} (2021), 174--181.
	MR4243001
	
		
	\bibitem{BCD}
    F. Bracci, M. D. Contreras, and S. D\'{i}az-Madrigal, \textit{Continuous Semigroups of Holomorphic Functions in the Unit Disc}. Springer, 2020. MR4252032
	
	\bibitem{CCZKRP}
	M. D. Contreras, F. J. Cruz-Zamorano, M. Kourou, and L. Rodr\'iguez-Piazza.
	\newblock {\em On the Hardy number of Koenigs domains.}
	\newblock {Anal. Math. Phys. {\bf 14} (2024), no. 6, Paper No. 119, 21 pp.}
    MR4813211
	
	
	\bibitem{Duren-Hp}
	P. L. Duren.
	\newblock {\em Theory of $H^p$ Spaces}.
	\newblock Academic press (1970).
	MR0268655
	
	\bibitem{Duren-Apa}
	P. L. Duren and A. Schuster.
	\newblock {\em Bergman spaces.}
	\newblock {American Mathematical Society}, (2004).
	MR1758653
	
	\bibitem{Essen}
	M. Ess\'en.
	\newblock {\em On analytic functions which are in {$H\sp{p}$} for some positive  {$p$}}.
	\newblock {Ark. Mat.}, {\bf 19} (1981), 43--51.
	MR0625536
	
	\bibitem{EvansGariepy}
	L. C. Evans and R. F. Gariepy.
	\newblock {\em Measure Theory and Fine Properties of Functions.}
	\newblock {CRC Press, Boca Raton}, revised version (2015).
	MR3409135
	
	\bibitem{Hansen1}
	L. J. Hansen.
	\newblock {\em Hardy classes and ranges of functions}.
	\newblock {Michigan Math. J.}, {\bf 17} (1970), 235--248.
	MR0262512
	
	\bibitem{Hay} 
		W. K. Hayman.
		\newblock {\em Subharmonic functions. Vol. II}.	
		\newblock {London Math. Soc. Monogr.}, {\bf 20} (1989).
		MR1049148
	
	\bibitem{HK}
	W. K. Hayman and P. B. Kennedy.
	\newblock {\em Subharmonic functions. Vol. I}.
	\newblock {London Math. Soc. Monogr.}, {\bf 9} (1976).
	MR0460672

	
	\bibitem{K-HypDist}
	C. Karafyllia.
	\newblock {\em On the {H}ardy number of a domain in terms of harmonic measure and hyperbolic distance.}
	\newblock {Ark. Mat.}, {\bf 58} (2020), 307--331.
	MR4176082
	
	
	\bibitem{K-Comb2}
	C. Karafyllia.
	\newblock {\em On the {H}ardy numbers for comb domains}.
	\newblock {Ann. Fenn. Math.}, {\bf 47} (2022), 587--601.
	MR4400870
	
	\bibitem{K-Domain}
	C. Karafyllia.
	\newblock {\em The Bergman number of a plane domain}.
	\newblock {Illinois J. Math}, {\bf 67} (2023), 485--498.
	MR4644383
	
	
	\bibitem{KK}
	C. Karafyllia and N. Karamanlis.
	\newblock {\em Geometric characterizations for conformal mappings in weighted Bergman spaces.}
	\newblock {J. Anal. Math.}, {\bf 150} (2023), 303--324.
	MR4645751
	
	\bibitem{Kellogg}
	O. D. Kellogg.
	\newblock {\em Foundations of potential theory.}
	\newblock {Springer-Verlag}, (1967).
	MR0222317
	
	
	\bibitem{KimSugawa}
	Y. C. Kim and T. Sugawa.
	\newblock {\em Hardy spaces and unbounded quasidisks}.
	\newblock {Ann. Acad. Sci. Fenn. Math.}, {\bf 36} (2011), 291--300.
	MR2797697
	
	
	\bibitem{Marshall}
	D. E. Marshall.
	\newblock{\em Complex analysis}.
	\newblock{Cambridge University Press}, (2019).
	MR4321146 

	

 \bibitem{PommerenkeUnivFun}
Ch. Pommerenke.
\newblock {\em Univalent functions}.
\newblock {Studia Mathematica} {\bf 25}. Vandenhoeck \& Ruprecht (1975).
MR0507768
	
	
	\bibitem{PS}
	S. C. Port and C. J. Stone.
	\newblock {\em Brownian Motion and Classical Potential Theory}.
	\newblock Academic Press, New York, (1978).
	MR0492329
	

	
	
	\bibitem{Ransford}
	T. Ransford.
	\newblock {\em Potential Theory in the Complex Plane}.
	\newblock Cambridge University Press (1995).
	MR1334766
	
	\bibitem{Smith}
	W. Smith.
	\newblock {\em Composition operators between Bergman and Hardy spaces.}
	\newblock {Trans. Amer. Math. Soc.}, {\bf 348} (1996), 2331--2348.
	MR1357404
	
	
	\bibitem{Sol}
    A. Yu. Solynin.
	\newblock {\em Functional inequalities via polarization}.
	\newblock Algebra i Analiz {\bf 8} (1996), 148--185 (in Russian); English  transl. in St. Petersburg Math. J. {\bf 8} (1997), 1015--1038.
	MR1458141

	
	\bibitem{Yamashita}
	S. Yamashita.
	\newblock {\em Criteria for functions to be of Hardy class $H^p$.}
	\newblock {Proc. Amer. Math. Soc.}, {\bf 75} (1979), 69--72.
	MR0529215

	
\end{thebibliography}
\end{document}